\DeclareMathOperator{\supp}{supp}
\DeclareMathOperator{\vol}{vol}
\DeclareMathOperator{\diam}{diam}
\DeclareMathOperator{\cpl}{Cpl}
\DeclareMathOperator{\cplbc}{Cpl_{bc}}
\DeclareMathOperator{\law}{Law}
\DeclareMathOperator{\var}{var}
\DeclareMathOperator{\mean}{m}
\begin{document}

\def\mf{\mathfrak{M}}
\def\a{\mathcal{A}}
\def\b{\mathcal{B}}
\def\c{\mathcal{C}}
\def\d{\mathscr{D}}
\def\e{\mathcal{E}}
\def\f{\mathcal{F}}
\def\g{\mathcal{G}}
\def\h{\mathcal{H}}
\def\i{\mathcal{I}}
\def\m{\mathcal{M}}
\def\n{\mathcal{N}}
\def\sp{\mathcal{P}}
\def\r{\mathcal{R}}
\def\s{\mathscr{S}}
\def\x{\mathcal{X}}
\def\y{\mathcal{Y}}
\def\z{\mathcal{Z}}
\def\t{\mathcal{T}}
\def\w{\mathcal{W}}

\def\A{\mathbb{A}}
\def\B{\mathbb{B}}
\def\C{\mathbb{C}}
\def\D{\mathbb{D}}
\def\E{\mathbb{E}}
\def\F{\mathbb{F}}
\def\H{\mathbb{H}}
\def\T{\mathbb{T}}
\def\L{\mathcal{L}}
\def\K{\mathbb{K}}
\def\N{\mathbb{N}}
\def\T{\mathbb{T}}
\def\P{\mathbb{P}}
\def\Q{\mathbb{Q}}
\def\R{\mathbb{R}}
\def\W{\mathbb{W}}
\def\Z{\mathbb{Z}}

\def\wstar{\overset{*}{\rightharpoonup}}
\def\nab{\nabla}
\def\dt{\partial_t}
\def\hal{\frac{1}{2}}
\def\ep{\varepsilon}
\def\vchi{\text{\large{$\chi$}}}
\def\ls{\lesssim}
\def\gs{\gtrsim}
\def\p{\partial}

\def\vn{\vol_n}
\def\om{\mu^\ast}

\def\lb{\log_b}

\def\wks{\buildrel\ast\over\rightharpoonup}
\def\wk{\rightharpoonup}

\def\fxx{\F\mspace{-3mu}\jump{\X}}
\def\fl{\F\mspace{-.5mu}(\mspace{-2mu}(\X)\mspace{-2mu})}
\def\afk{\mathbb{A}(\mathbb{K},\mathbb{F})}
\def\cf{\mathbb{C}(\F)}
\def\Re{\text{Re}}
\def\Im{\text{Im}}

\def\af{\mathfrak{A}}
\def\hf{\mathfrak{H}}
\def\ff{\mathfrak{F}}
\def\lf{\mathfrak{L}}
\def\bf{\mathfrak{B}}
\def\mf{\mathfrak{M}}
\def\pf{\mathfrak{P}}
\def\ep{\varepsilon}
\def\qed{{\hfill $\Box$ \bigskip}}
\def\st{\;\vert\;}
\def\vchi{\text{\large{$\chi$}}}

\newlist{anumerate}{enumerate}{1}
\setlist[anumerate,1]{label=(\alph*)}

\newcommand{\abs}[1]{\left\vert#1\right\vert}
\newcommand{\norm}[1]{\left\Vert#1\right\Vert}
\newcommand{\csubset}{\subset\subset}
\newcommand{\ind}[1]{\mathbbm{1}_{#1}}
\newcommand{\qnorm}[1]{\left \vert \mspace{-1.8mu} \left\vert
\mspace{-1.8mu} \left \lvert #1 \right \vert \mspace{-1.8mu} \right\vert
\mspace{-1.8mu} \right\vert}

\newcommand{\mhat}[1]{\accentset{\medianhat}{#1}}
\newcommand{\medianhat}{\smash{\raisebox{-1.08ex}{\scalebox{0.8}{$\widehat{\phantom{x}}$}}}} %

\newcommand{\br}[1]{\langle #1 \rangle}
\newcommand{\ns}[1]{\norm{#1}^2}
\newcommand{\ip}[1]{\left(#1 \right)}
 
\newcommand{\jump}[1]{\left\llbracket #1 \right\rrbracket }

\newtheorem{thm}{Theorem}
\newtheorem{cor}[thm]{Corollary}
\newtheorem{df}[thm]{Definition}
\newtheorem{assume}[thm]{Assumption}
\newtheorem{prop}[thm]{Proposition}
\newtheorem{lem}[thm]{Lemma}
\newtheorem{ex}[thm]{Example}
\newtheorem*{nota}{Notation}
\theoremstyle{definition}
\newtheorem{rmk}[thm]{Remark}

\title{The fast rate of convergence of the smooth adapted Wasserstein distance}
\author{Martin Larsson\thanks{Carnegie Mellon University, Department of Mathematical Sciences, \href{mailto:larsson@cmu.edu}{larsson@cmu.edu}} \and Jonghwa Park\thanks{Carnegie Mellon University, Department of Mathematical Sciences, \href{mailto:jonghwap@andrew.cmu.edu}{jonghwap@andrew.cmu.edu}} \and Johannes Wiesel\thanks{University of Copenhagen, Department of Mathematics, \href{mailto:wiesel@math.ku.dk}{wiesel@math.ku.dk}}}

\date{}

\maketitle

\begin{abstract}
Estimating a $d$-dimensional distribution $\mu$ by the empirical measure $\mhat{\mu}_n$ of its samples is an important task in probability theory, statistics and machine learning. It is well known  that $\E[\w_p(\mhat{\mu}_n, \mu)]\lesssim n^{-1/d}$ for $d>2p$, where $\w_p$ denotes the $p$-Wasserstein metric. An effective tool to combat this curse of dimensionality is the smooth Wasserstein distance $\w^{(\sigma)}_p$, which measures the distance between two probability measures after having convolved them with isotropic Gaussian noise $\mathcal{N}(0,\sigma^2\text{I})$. In this paper we apply this smoothing technique to the adapted Wasserstein distance. We show that the smooth adapted Wasserstein distance $\a\w_p^{(\sigma)}$ achieves the fast rate of convergence $\E[\a\w_p^{(\sigma)}(\mhat{\mu}_n, \mu)]\lesssim n^{-1/2}$, if $\mu$ is subgaussian. This result follows from the surprising fact, that any subgaussian measure $\mu$ convolved with a Gaussian distribution has locally Lipschitz kernels.

\medskip
\noindent\emph{Keywords:} empirical measure, (smooth, adapted) Wasserstein distance,  fast rate, curse of dimensionality, Lipschitz kernels
\end{abstract}

\section{Introduction}\label{sec:intro}
Let the Borel probability measures $\mu$ and $\nu$ be the laws of two stochastic processes $X=(X_t)_{t=1}^T$ and $Y=(Y_t)_{t=1}^T$ on the path space $(\R^d)^T$, where $d\ge 1$ and $T\ge 2$. Let furthermore $\sp_p((\R^d)^T)$ denote the set of all Borel probability measures on $(\R^d)^T$ with finite $p$-moments, where $1\le p<\infty$ is fixed. The weak topology on $\sp_p((\R^d)^T)$ is metrized by the \emph{$p$-Wasserstein distance}
\begin{align}
    \w_p(\mu, \nu)
    :=\left(\inf_{\pi \in \cpl(\mu, \nu)}\int \abs{x-y}^p \pi(dx, dy)\right)^{1/p}.\label{eq:wass}
\end{align}
Here, $\abs{\cdot}$ denotes the $\ell^2$-norm on $(\R^d)^T$ and $\cpl(\mu, \nu)$ is the set of all couplings of $\mu$ and $\nu$; see \cite{villani2009optimal, villani2021topics, santambrogio2015optimal} for a general overview of optimal transport theory and  the Wasserstein distance.

For computational as well as estimation purposes, $\mu$ is often approximated by its empirical distribution $\mhat{\mu}_n:=\frac{1}{n}\sum_{j=1}^n \delta_{X^{(j)}}$ where $X^{(1)}, \ldots, X^{(n)}$ are i.i.d samples from $\mu$. By the Glivenko--Cantelli theorem, $\mhat{\mu}_n$ converges weakly to $\mu$ almost surely as the sample size $n$ approaches infinity. As a consequence, $\w_p(\mhat{\mu}_n, \mu)$ vanishes with probability one. However, this convergence is severely impeded by an exponential dependence on the dimension $dT$ of the path space, posing a challenge for computational efficiency. In fact, \cite{fournier2015rate} shows the sharp \emph{curse of dimensionality (COD)} convergence rates $\E[\w_p(\mhat{\mu}_n, \mu)]\le C n^{-1/(dT)}$ whenever $dT>2p$ and $\mu$ is supported on $([0,1]^d)^T$.

In order to improve these convergence rates, the \emph{smooth $p$-Wasserstein distance} $\w^{(\sigma)}_p$ was recently studied \cite{goldfeld2020gaussian, goldfeld2020convergence, goldfeld2020asymptotic, nietert2021smooth, sadhu2021limit, goldfeld2024limit, goldfeld2024statistical}.
\begin{df}[Smooth Wasserstein distance] Let $1\le p<\infty$ and $\mu, \nu\in \sp_p((\R^d)^T)$. The smooth $p$-Wasserstein distance between $\mu$ and $\nu$ with smoothing parameter $\sigma>0$ is defined as
\begin{align}\label{eq:smoothw}
    \w^{(\sigma)}_p(\mu, \nu)
    :=\w_p(\mu\ast \n_{\sigma}, \nu\ast \n_{\sigma}).
\end{align}
Here $\ast$ denotes the convolution operator and $\n_{\sigma}=\n(0, \sigma^2 \text{I}_{dT})$ is an isotropic Gaussian measure on $(\R^d)^T$.
\end{df}
Smoothing $\w_p$ in this way leads to several interesting results. In particular, the expected $\w^{(\sigma)}_p$-distance between $\mhat{\mu}_n$ and $\mu$ exhibits dimension-free convergence rates; this clearly improves upon the classical Wasserstein convergence rates discussed above. The following list gives a general overview of known results for $\w^{(\sigma)}_p$:
\begin{enumerate}[label=(\arabic*)]
    \item\label{ov:topology} \textbf{Topological equivalence.} The two metrics $\w^{(\sigma)}_p$ and $\w_p$ generate the same topology on $\sp_p((\R^d)^T)$. See \cite{goldfeld2020gaussian, nietert2021smooth}.
    \item\label{ov:proxy} \textbf{Stability.} As $\sigma\to 0$, $\w^{(\sigma)}_p\to \w_p$ and the metric $\w^{(\sigma)}_p$ serves as a proxy for the original metric $\w_p$. In particular, \cite{nietert2021smooth} shows that for some $C>0$,
    \begin{align}
        \w^{(\sigma)}_p(\mu, \nu)
        \le \w_p(\mu, \nu)
        \le \w^{(\sigma)}_p(\mu, \nu)+C\sigma.
    \end{align}
    \item\label{ov:slowrate} \textbf{Slow rate.} In \cite{goldfeld2020convergence, nietert2021smooth} it is shown that $\E[\w^{(\sigma)}_p(\mhat{\mu}_n, \mu)^p]\le Cn^{-1/2}$ holds when $\mu$ has a finite $q$-moment for some $q>2(dT+p)$. This result implies the \emph{slow rate} of convergence, i.e., for some $C>0$,
    \begin{align}
        \E[\w^{(\sigma)}_p(\mhat{\mu}_n, \mu)]\le Cn^{-1/(2p)}.
    \end{align}
    \item\label{ov:fastrate} \textbf{Fast rate.} If $\int e^{\beta \abs{x}^2}\mu(dx)<\infty$ for some $\beta>(p-1)/\sigma^2$ (requiring $\mu$ to be subgaussian), this can be improved to the \emph{fast rate} of convergence, i.e., for some $C>0$,
    \begin{align}
        \E[\w^{(\sigma)}_p(\mhat{\mu}_n, \mu)]\le C n^{-1/2};
    \end{align}
    see \cite{nietert2021smooth, block2022rate}. Moreover, \cite{goldfeld2020convergence, block2022rate} together with \cite{goldfeld2024limit} suggest that subgaussianity is necessary for the fast rate to hold.
    \item\label{ov:lim} \textbf{Distributional limit.} Under the same assumption as in \ref{ov:fastrate}, $\sqrt{n}\w^{(\sigma)}_p(\mhat{\mu}_n, \mu)$  converges weakly to a supremum of a tight Gaussian process. Details can be found in \cite{goldfeld2020asymptotic, sadhu2021limit} for $p=1$ and \cite{goldfeld2024limit, goldfeld2024statistical} for $p>1$.
\end{enumerate}

Taken together, the first two items show that the smoothed distance $\w^{(\sigma)}_p$ provides a good proxy for $\w_p$. For small values of $\sigma$, the smoothed distance remains close to the original Wasserstein distance. The third and fourth items highlight the improved statistical convergence rates achieved by the smoothed distance $\w^{(\sigma)}_p$, which makes it particularly appealing for statistical applications.

When the dimension is sufficiently small, the slow rate $n^{-1/(2p)}$ coincides with the classical Wasserstein rates for $\E[\w_p(\widehat{\mu}_n, \mu)]$; see \cite{fournier2015rate}. Smoothing ensures that the rate $n^{-1/(2p)}$ remain valid even in high dimensions.

When $p>1$, the fast rate~$n^{-1/2}$ implies strictly faster convergence than the slow rate~$n^{-1/(2p)}$. Going back to the seminal work of Ajtai--Koml\'os--Tusn\'ady \cite{ajtai1984optimal}, it is well-known that the fast rate~$n^{-1/2}$ is achievable only if the support of $\mu$ is regular enough or topologically connected. In our setting, convolving $\mu$ with a Gaussian measure inherently ensures this connectivity by spreading mass across the entire space, and thereby facilitating the rate $n^{-1/2}$. While establishing the slow rate~\ref{ov:slowrate} under minimal assumptions is of independent interest, the distributional limit~\ref{ov:lim} suggests that the fast rate~\ref{ov:fastrate} is sharp in general. As a simple example, take $\mu=\frac{1}{2}(\delta_{a}+\delta_{b})$ for $a\neq b$. Similar to \cite[Example (a) on page 2]{fournier2015rate}, we have $\E[\w_p^{(\sigma)}(\mhat{\mu}_n, \mu)]\ge Cn^{-1/2}$ (see Appendix~\ref{appendix:sharp} for details).

While smoothing offers statistical advantages, it also introduces certain computational challenges. The convolution approach yields on optimal transport problem of Gaussian mixtures. To the best of our knowledge, efficient algorithms that exploit this mixture structure are currently unknown. One possible approach is to approximate the convolution by sampling from the Gaussian kernel, although this may partially offset the statistical advantages of the smoothed distance.

In this paper, we extend the smoothing technique for $\w_p^{(\sigma)}$ to the \textit{adapted Wasserstein distance} $\a\w_p$ and study the statistical properties of its smoothed counterpart $ \a\w_p^{(\sigma)}$, the \textit{smooth adapted Wasserstein distance}. The adapted Wasserstein distance  was introduced to address the following issue: 
\begin{quote}
For many time-dependent operators $F:\sp_p((\R^d)^T)\to \R$, $\liminf_{n\to \infty} |F(\mu_n)-F(\mu)|>0$ even if $\lim_{n\to \infty} \w_p(\mu_n, \mu)=0$.
\end{quote}
Examples of such operators $F$ are the value functions of optimal stopping problems, the Doob decomposition, superhedging problems, utility maximization, stochastic programming and risk measurements \cite{pflug2012distance, pflug2014multistage, glanzer2019incorporating, acciaio2020causal, backhoff2020adapted}; these commonly account for the time-structure of $\mu$ and $\nu$. As we describe in more detail below, $\a\w_p$ generates the coarsest topology, which makes such operators continuous \cite{backhoff2020all}. Having introduced $\a\w_p^{(\sigma)}$, the main result of this paper, presented in Theorem~\ref{thm:fast_rate}, can be summarized as follows:

\begin{tcolorbox}
If $\mu$ is subgaussian, then $\E[\a\w^{(\sigma)}_p(\mu, \mhat{\mu}_n)]\lesssim 1/\sqrt{n}$, i.e., the fast rate of convergence holds for the smooth adapted Wasserstein distance.
\end{tcolorbox}

Before discussing Theorem~\ref{thm:fast_rate} in Section~\ref{sec:main} in more detail, we first recall basic facts about $\a\w_p$ in Section~\ref{sec:adapted distances} and review existing results on finite-sample guarantees for $\a\w_p$ and $\a\w^{(\sigma)}_p$ in Section~\ref{sec:approx adapted} and Section~\ref{sec:prior smooth aw}, respectively.

\subsection{Adapted distances}\label{sec:adapted distances}

To illustrate the fact that the usual Wasserstein distance $\w_p$ is inadequate for time-dependent optimization problems, take $T=2, d=1$ and consider the laws $ \mu=\frac{1}{2}\delta_{(0,1)}+\frac{1}{2}\delta_{(0, -1)}$ and $ \mu_{\ep}=\frac{1}{2}\delta_{(\ep, 1)}+\frac{1}{2}\delta_{(-\ep, -1)}$ for $\ep>0$. Figure~\ref{fig:example_path} illustrates their sample paths. It is evident that $\mu_{\ep}$ is close to $\mu$ in Wasserstein distance for small $\ep$; in fact, $\w_p(\mu_{\ep}, \mu)=\ep$. However, the process $X^{\ep}\sim \mu^\epsilon$ differs significantly from $X\sim \mu$ as its values at time $2$ are already determined at time $1$. To see this, note that for $X=(X_1, X_2)\sim \mu$ and $X^{\ep}=(X^{\ep}_1, X^{\ep}_2)\sim \mu_{\ep}$,
\begin{align}
    \law(X_2 | X_1)=\left(\frac{1}{2}\delta_1+\frac{1}{2}\delta_{-1}\right)\ind{\{X_1=0\}},\quad
    \law(X^{\ep}_2 \,|\, X^{\ep}_1)
    =\delta_1 \ind{\{X^{\ep}_1=\ep\}}+\delta_{-1} \ind{\{X^{\ep}_1=-\ep\}}.
\end{align}
As a consequence, the values of utility maximization problems and optimal stopping problems for $\mu^\epsilon$ do not converge to the corresponding values for $\mu$.
\begin{figure}[ht]
    \centering
    \includegraphics[width = 8.75cm, height = 2.5cm]{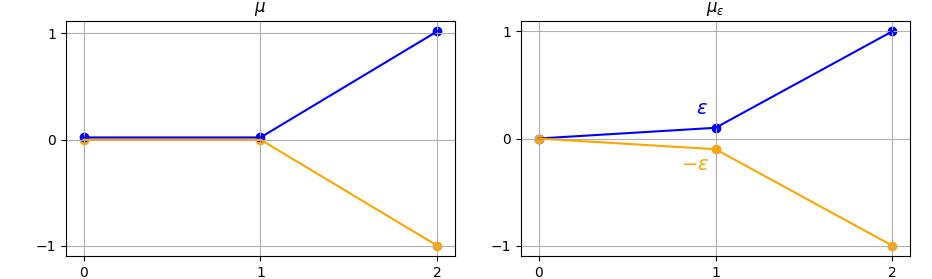}
    \caption{$\mu=\frac{1}{2}\delta_{(0,1)}+\frac{1}{2}\delta_{(0,-1)}$ on the left and $\mu_{\ep}=\frac{1}{2}\delta_{(\ep, 1)}+\frac{1}{2}\delta_{(-\ep, -1)}$ on the right.} 
    \label{fig:example_path}
\end{figure}

To take the flow of information formalized through the natural filtration of a stochastic process into account, several variants of the weak topology have been independently developed by different communities, e.g., \cite{kantorovich1942translocation, Al81, hoover1984adapted, ruschendorf1985wasserstein, hellwig1996sequential, lassalle2018causal, acciaio2019extended, bonnier2023adapted}. Notably, \cite{backhoff2020all} demonstrates that all these seemingly different variants of topologies coincide and are generated by the adapted Wasserstein distance. In order to define it formally, we first introduce the notion of \emph{bicausal couplings}.

\begin{df}[Bicausal coupling] Let $\mu$ and $\nu$ be two probability measures on $(\R^d)^T$. A coupling $\pi \in \cpl(\mu, \nu)$ is bicausal if for $(X, Y)\sim \pi$ and $t\in \{1,2,\ldots, T-1\}$,
\begin{align}
    (Y_1, \ldots, Y_t) \text{ and } (X_{t+1}, \ldots, X_T) \text{ are conditionally independent given } X_1, \ldots, X_t,
\end{align}
and
\begin{align}
    (X_1, \ldots, X_t) \text{ and } (Y_{t+1}, \ldots, Y_T) \text{ are conditionally independent given } Y_1, \ldots, Y_t.
\end{align}
The set of all bicausal couplings between $\mu$ and $\nu$ is denoted by $\cplbc(\mu, \nu)$.
\end{df}

\begin{df}[The adapted Wasserstein distance]\label{df:aw} Let $1\le p<\infty$ and $\mu, \nu\in \sp_p((\R^d)^T)$. The adapted $p$-Wasserstein distance between $\mu$ and $\nu$ is defined as
\begin{align}\label{eq:adw}
    \a\w_p(\mu, \nu)
    :=\left(\inf_{\pi \in \cplbc(\mu, \nu) }\int \sum_{t=1}^T \abs{x_t-y_t}^p \pi(dx, dy)\right)^{1/p}.
\end{align}
Similarly to \eqref{eq:smoothw}, the smooth adapted $p$-Wasserstein distance between $\mu$ and $\nu$ with smoothing parameter $\sigma>0$ is defined as
\begin{align}
    \a\w^{(\sigma)}_p(\mu, \nu)
    :=\a\w_p(\mu\ast \n_{\sigma}, \nu\ast \n_{\sigma}).
\end{align}
\end{df}
As mentioned above, the adapted Wasserstein distance induces the coarsest topology that makes optimal stopping problems continuous \cite{backhoff2020all}. This topology is finer than the weak topology.

\subsection{Approximations in adapted distances}\label{sec:approx adapted}
Unlike the Wasserstein case, it is well-known (e.g., \cite{pflug2016empirical, backhoff2022estimating}) that the empirical measure $\mhat{\mu}_n$ does not converge to $\mu$ in adapted Wasserstein distance as $n\to \infty$. To ensure approximation of $\mu$ by empirical data in adapted Wasserstein sense, \cite{backhoff2022estimating} devise the so-called \emph{adapted empirical measure} $\boldsymbol{\mhat{\mu}_n}$ as an alternative to the empirical measure $\mhat{\mu}_n$. It is defined as a projection of $\mhat{\mu}_n$ onto a refining grid and satisfies $\lim_{n\to \infty} \a\w_p(\boldsymbol{\mhat{\mu}_n},\mu)=0$. However the convergence rate obtained for $\boldsymbol{\mhat{\mu}_n}$ is essentially the same as the Wasserstein COD rates for $\E[\w_p(\mhat{\mu}_n, \mu)]$. In fact, it was shown in \cite{backhoff2022estimating, acciaio2024convergence} that $\E[\a\w_1(\boldsymbol{\mhat{\mu}_n}, \mu)]\le C n^{-1/(dT)}$ for some $C>0$ when $d\ge 3$ and $\mu$ is a probability measure on $([0,1]^d)^T$ that has Lipschitz kernels. As $\w_p\le C\a\w_p$ for some constant $C$ that depends only on $d, T$, the rates for $\E[\a\w_1(\boldsymbol{\mhat{\mu}_n}, \mu)]$ are sharp.

Motivated by the non-adapted counterpart $\w^{(\sigma)}_p$, smoothing techniques have been introduced to achieve dimension-free adapted Wasserstein approximations. One of the earliest works in this direction is \cite{pflug2016empirical}, which states that $\a\w_p(\mu, \mhat{\mu}_n\ast \eta_{\sigma_n})$ converges to zero in probability under arguably restrictive assumptions on $\mu$. Here $\eta_{\sigma_n}$ are non-Gaussian smoothing kernels converging weakly to the Dirac distribution $\delta_0$ for $\sigma_n \to 0$. Precise statements can be found in \cite[Theorem $4$]{pflug2016empirical}. On the contrary, we keep $\sigma>0$ fixed throughout this work.

\subsection{Prior results for the smooth adapted Wasserstein distance}\label{sec:prior smooth aw}

Paralleling our list for $\w_p^{(\sigma)}$ above, let us now provide an overview of known results for $\a\w_p^{(\sigma)}$. It seems natural to conjecture that items (1)-(5) still hold when replacing $\w_p$ by $\a\w_p$ and $\w_p^{(\sigma)}$ by $\a\w_p^{(\sigma)}$. Perhaps surprisingly, it turns out that this is not the case, as already the first item on the list fails. Here is the corresponding list of facts for the adapted Wasserstein distance:

\begin{enumerate}[label=(\arabic*)]
\item\label{ov:aw topology} \textbf{Topology.} The two metrics $\a\w^{(\sigma)}_p$ and $\w_p$ generate the same topology on on $\sp_p((\R^d)^T)$ \cite{blanchet2024bounding}. Note that this is \emph{not} the same topology as the one generated by $\a\w_p$.

\item\label{ov:aw stability} \textbf{Stability.} For small values of $\sigma$, $\a\w^{(\sigma)}_p$ closely approximates $\a\w_p$. More specifically, \cite{blanchet2024bounding}~establishes that
\begin{align} \label{eq:old}
    \abs{\a\w^{(\sigma)}_p(\mu, \nu)-\a\w_p(\mu, \nu)}\le C(\omega_{\mu}(\sigma)+\omega_{\nu}(\sigma))
\end{align}
where $\omega_{\mu}$ quantifies the regularity of the kernels of $\mu$. The precise definition of $\omega_{\mu}$ is given in \cite[Theorem $26$]{blanchet2024bounding}. Let us emphasize that $\lim_{\sigma\to 0}\omega_{\mu}(\sigma)=0$. In particular, if $\mu$ has Lipschitz kernels, $\omega_{\mu}(\sigma)\le C \sigma$ in \eqref{eq:old}.

\item\label{ov:aw slow rate} \textbf{Slow rate.} For $p=1$, \cite{hou2024convergence} shows that $\E[\a\w^{(\sigma)}_1(\mhat{\mu}_n, \mu)]\le Cn^{-1/2}$ for compactly supported $\mu$.
For general $p\ge 1$, \cite{blanchet2024bounding} shows that $\E[\a\w_p^{(\sigma)}(\mhat{\mu}_n, \mu)^p]\le Cn^{-1/2+p/(2q)}$ if $\int \abs{x}^q\mu(dx)<\infty$ for some $q>p\vee (dT+2)$. In particular, if $\mu$ has a compact support, we can take $q$ arbitrarily large, resulting in the bound $\E[\a\w^{(\sigma)}_p(\mhat{\mu}_n, \mu)^p]\le C n^{-1/2+\ep}$ for arbitrarily small $\ep>0$ and a constant $C>0$ depending on $\ep$. This recovers the slow rate \ref{ov:slowrate} for $\a\w^{(\sigma)}_p$, up to a loss of $\ep$.

\item\label{ov:aw fast rate} \textbf{Fast rate.} To the best of our knowledge, there is no existing result for the fast rate for $\a\w^{(\sigma)}_p$. Our paper fills this gap.

\item\label{ov:aw distributional limit} \textbf{Distributional limit.} To the best of our knowledge, there is no result for the limiting distribution of $\sqrt{n}\a\w^{(\sigma)}_p(\mhat{\mu}_n, \mu)$. We plan to address this question in future research.

\end{enumerate}

While the first result suggests that the smooth variant does not preserve the topological structure induced by the metric $\a\w_p$, the stability bound in the second item~\ref{ov:aw stability} demonstrates that $\a\w^{(\sigma)}_p$ still provides a quantitative proxy for $\a\w_p$. In particular, when the smoothing parameter $\sigma$ is small, the gap between $\a\w_p$ and $\a\w^{(\sigma)}_p$ can be controlled quantitatively. From a statistical point of view, this makes $\a\w^{(\sigma)}_p$ a natural surrogate for $\a\w_p$, in the same spirit that $\w^{(\sigma)}_p$ is often used as a surrogate for the classical Wasserstein distance $\w_p$. \\
Furthermore, the estimator $\widehat{\mu}^{\sigma}_n$ is consistent for $\a\w_p$ and its rate of convergence can be quantified via the rates for $\a\w^{(\sigma)}_p$, as 
\begin{align}
    \mathbb{E}[\a\w_p(\widehat{\mu}^{\sigma}_n, \mu)]
    &\le \mathbb{E}[\a\w^{(\sigma)}_p(\widehat{\mu}_n, \mu)]
    +\a\w_p(\mu^{\sigma}, \mu)\\
    &\le C_1 n^{-1/2} + C_2 \omega_\mu(\sigma).
\end{align}
For a fixed value of $\sigma$, the number of samples needed to approximate $\mu$ at a precision of $\epsilon$ is thus of the dimension free order $n^{1/2}/\epsilon$. \\

Regarding the fast rate for $\a\w^{(\sigma)}_p$, the closest paper we could find is \cite{blanchet2024empirical}, where the authors introduce the so-called \textit{smoothed empirical martingale projection distance} $\text{MPD}^{\ast \xi}(\mhat{\mu}_n, p)$. It is defined as the minimal $\a\w^p_p$-value between $\mhat{\mu}_n\ast \xi$ for a smoothing distribution $\xi$ and the set of martingale measures  (i.e., the set of measures $\nu$ satisfying $\E[X_2|X_1]=X_1$ if  $(X_1, X_2)\sim \nu$). $\text{MPD}^{\ast \xi}(\mhat{\mu}_n, p)$ is used to construct a test for the martingale property of $\mu$. When $\mu$ is a martingale measure, \cite{blanchet2024empirical} shows that $n^{p/2}\text{MPD}^{\ast \xi}(\mhat{\mu}_n, p)$ has a weak limit. Although $\xi$ is not necessarily Gaussian and their setting differs from ours, it reflects the spirit of the fast rate we aim to explore. This is why we will examine this example in more detail in Section \ref{sec:appl}.

\subsection{Main results}\label{sec:main}

We are now in a position to state our main result: the fast rate for the smooth adapted $p$-Wasserstein distance and subgaussian measures $\mu$.

\begin{thm}[Fast rate]\label{thm:fast_rate}
Let $1<p<\infty$ and $\sigma>0$. Suppose that $\mu$ is a probability measure on $(\R^d)^T$, where $d\ge 1$ and $T\ge 2$, such that $\int e^{q\abs{x}^2/(2\sigma^2)}\mu(dx)<\infty$ for $q>8p(2p-1)(T+9)$. Then there exists a constant $C>0$ that depends only on $p, q, d, T, \sigma$ and $\int e^{q\abs{x}^2/(2\sigma^2)}\mu(dx)$ such that
    \begin{align}\label{eq:fast_rate}
        \E[\a\w^{(\sigma)}_p(\mhat{\mu}_n,\mu)]\le \frac{C}{\sqrt{n}}.
    \end{align}
\end{thm}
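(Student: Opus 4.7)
The strategy, as foreshadowed in the abstract, is to pass through the structural fact that $\mu\ast\n_\sigma$ has locally Lipschitz conditional kernels, and to leverage this Lipschitz regularity to upgrade the non-adapted fast rate \ref{ov:fastrate} to the adapted one. I would attack this in two stages plus a final assembly.

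\emph{Step 1: Lipschitz kernels of $\mu\ast\n_\sigma$.} I would first establish, as a stand-alone lemma, that under the subgaussian hypothesis on $\mu$, the smoothed measure $\mu_\sigma:=\mu\ast\n_\sigma$ has locally Lipschitz conditional kernels. Writing the density as $f_\sigma(x)=\int\phi_\sigma(x-z)\,\mu(dz)$ with $\phi_\sigma$ the isotropic Gaussian density, the conditional $f_\sigma(x_{t+1:T}\mid x_{1:t})$ admits a clean Bayesian representation as a Gaussian mixture weighted by the posterior $\mu(dz\mid x_{1:t})$. Differentiating in $x_{1:t}$ and using the identity $\nabla_x\phi_\sigma(x-z)=-\sigma^{-2}(x-z)\phi_\sigma(x-z)$ reduces the kernel gradient to an expression in the posterior covariance of $z$ given $x_{1:t}$; the subgaussian tail of $\mu$ then controls this covariance through standard exponential tilting, yielding
\begin{align}
\w_p\bigl(\mu_\sigma(\cdot\mid x_{1:t}),\mu_\sigma(\cdot\mid y_{1:t})\bigr)\le L(x_{1:t},y_{1:t})\,|x_{1:t}-y_{1:t}|,
\end{align}
with $L$ at most polynomial in $|x|,|y|$ and possessing sufficiently many moments under $\mu_\sigma$.

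\emph{Step 2: Recursive bicausal coupling.} I would then build a bicausal coupling of $\mu_\sigma$ and $\mhat{\mu}_n\ast\n_\sigma$ layer by layer via the tower property: start with an optimal coupling of the first marginals, and, having produced $(X_{1:t},Y_{1:t})$, draw $(X_{t+1},Y_{t+1})$ from an optimal transport of the conditionals $\mu_\sigma(\cdot\mid X_{1:t})$ and $(\mhat{\mu}_n\ast\n_\sigma)(\cdot\mid Y_{1:t})$. A triangle inequality splits the $\w_p$-cost of the inner coupling into a \emph{kernel-drift} piece bounded by Step~1 and an \emph{empirical} piece $\w_p(\mu_\sigma(\cdot\mid Y_{1:t}),(\mhat{\mu}_n\ast\n_\sigma)(\cdot\mid Y_{1:t}))$. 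The first piece produces a Gronwall-type recursion in $|X_{1:t}-Y_{1:t}|^p$, while the second is estimated through the explicit mixture-of-Gaussians densities together with the classical fast rate \ref{ov:fastrate} for the non-adapted smooth Wasserstein distance. Summing over $t=1,\ldots,T$ and taking expectations then gives $\E[\a\w_p^{(\sigma)}(\mhat{\mu}_n,\mu)]\lesssim n^{-1/2}$.

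\emph{Main obstacle.} The hardest part is controlling the empirical piece, because the conditional density of $\mhat{\mu}_n\ast\n_\sigma$ at $Y_{1:t}$ is a ratio of two sums of Gaussian kernels, and a small denominator in the tails can catastrophically amplify errors. I expect one must truncate to a ball of radius $R\asymp\sqrt{\log n}$, absorb the complement using the subgaussian tail of $\mu$, and estimate the truncated ratio by concentration arguments for Gaussian kernel density estimators. The precise moment threshold $q>8p(2p-1)(T+9)$ should emerge from balancing the $T$ recursive applications of the Lipschitz bound --- each contributing a polynomial factor in $R$ --- against the exponential moments of $\mu$ needed to render those factors integrable without degrading the $n^{-1/2}$ rate.
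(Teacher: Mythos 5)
Your overall architecture coincides with the paper's (Lipschitz regularity of the kernels of $\mu\ast\n_\sigma$, a recursive/DPP construction of a bicausal coupling, and an empirical estimate for the conditional errors), but the two technically decisive steps contain genuine gaps. First, your claim that the local Lipschitz constant $L(x_{1:t},y_{1:t})$ is ``at most polynomial'' under a finite Gaussian-moment hypothesis is unsubstantiated, and it is not what the computation delivers: differentiating the conditional density and testing against the dual class needed for $\w_p$ with $p>1$ (the paper goes through the dual homogeneous Sobolev norm with Gaussian reference measure, a Poincar\'e inequality, and a lower bound on the conditional density, see Proposition~\ref{prop:lipkernel}) produces factors of the form $e^{\beta\abs{x_{1:t}}^2/\sigma^2}$, i.e.\ exponential in the square of the conditioning variable; ``standard exponential tilting'' of the posterior covariance is not a proof, and the posterior covariance of a subgaussian prior under Gaussian noise can itself grow with $\abs{x}$. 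This matters because with a state-dependent, growing constant your ``Gronwall-type recursion in $\abs{X_{1:t}-Y_{1:t}}^p$'' does not close: the paper has to Cauchy--Schwarz the exponential weight against the transport cost, replace the kernel distances by $\w_{2p}$-distances, and control exponential moments of $\mhat{\mu}_n$ on a good event $\Omega_0$ (Lemma~\ref{lem:dppbound}); none of this is visible in your sketch, and without it the induction over $t$ breaks down.

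Second, the ``empirical piece'' cannot be settled by invoking the non-adapted fast rate \ref{ov:fastrate}: that result bounds $\w^{(\sigma)}_p(\mhat{\mu}_n,\mu)$ for the unconditional laws, whereas you need control of $\w_{p}\bigl((\mu^{\sigma})_{y_{1:t}},(\mhat{\mu}^{\sigma}_n)_{y_{1:t}}\bigr)$ uniformly enough in $y_{1:t}$ to integrate against $\mhat{\mu}^{\sigma}_n$, and the empirical conditional is a self-normalized ratio of Gaussian kernel sums built from the very samples defining the conditioning density --- a quantity outside the scope of the cited theorem. This is where the paper does most of its work: it linearizes the ratio on the concentration event $\Omega_0$, reduces to empirical-process norms $\norm{\mu_{1:t+1}-(\mhat{\mu}_n)_{1:t+1}}_{\h^{t,\sigma,2p}_k}$ over explicit smooth function classes indexed by spatial shells, applies the covering bound of Lemma~\ref{lem:emp}, and verifies summability of the resulting exponentially weighted series, which is precisely where the threshold $q>8p(2p-1)(T+9)$ comes from. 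Your $\sqrt{\log n}$-truncation plus KDE-concentration outline is a plausible alternative in spirit, but as stated it neither shows that the exponential weights $e^{c\abs{y_{1:t}}^2/\sigma^2}$ (from the Lipschitz constants and from the dual test functions convolved with the Gaussian, which grow like $e^{(p-1)\abs{z}^2/\sigma^2}$) leave the $n^{-1/2}$ rate intact, nor addresses the dependence between numerator and denominator of the conditional estimator. So the skeleton is right, but the two hardest steps remain unproven.
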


Using the fact that $\w_p\le C\a\w_p$ for some constant $C$ that depends only on $d, T$, Theorem \ref{thm:fast_rate} is sharp. Furthermore, recall from~\ref{ov:fastrate} for $\w^{(\sigma)}_p$ that the fast rate $n^{-1/2}$ essentially holds only if $\mu$ is subgaussian. Theorem~\ref{thm:fast_rate} shows that the converse is true.

\begin{rmk}[Dependence of $C$ on $\sigma$]
While the convergence rate $n^{-1/2}$ is dimension-free, the prefactor $C$ may grow exponentially with the dimension, especially when $\sigma$ is small. In the special case where $p=1$, \cite[Theorem $4.1$]{hou2024convergence} shows that for $\a\w^{(\sigma)}_1$, the dependence of $C$ on small values of $\sigma$ is $\sigma^{-dT/2}$, which becomes dominant in high dimensions. As pointed out in \cite[Remark $3$]{nietert2021smooth}, a similar phenomenon occurs for the non-adapted counterpart $\w^{(\sigma)}_p$. Indeed, when $p=1$, \cite[Proposition $1$]{goldfeld2020convergence} shows that the corresponding dependence also scales as $\sigma^{-dT/2}$ (for an underlying $dT$-dimensional space). For higher values of $p>1$, one may expect the dependence on $\sigma$ to deteriorate further. Our proof focuses on establishing the sharp rate $n^{-1/2}$ in the case $p>1$ and does not track this dependence explicitly. 
\end{rmk}

We detail the proof of Theorem~\ref{thm:fast_rate} in Section~\ref{sec:proof}. We also remark that the moment assumption $q>8p(2p-1)(T+9)$ can be relaxed. In fact, we will see in Section~\ref{sec:proof}, that \eqref{eq:fast_rate} holds if $q>\inf_{\beta} q^{*}(p, T, \beta)$, where the infimum is taken over all parameters $\beta$ in the set \eqref{assume:parameters}. We refer to \eqref{assume:q star} for a precise definition of $q^{*}(p, T, \beta)$.

The proof of Theorem \ref{thm:fast_rate} combines the dynamic programming principle for the adapted Wasserstein distance (see Proposition \ref{prop:dpp}) with the following rather surprising result, stating that \emph{any} compactly supported measure convolved with Gaussian noise automatically has Lipschitz kernels:

\begin{prop}[Smoothed measures have Lipschitz kernels; exact statement in Proposition \ref{prop:lipkernel}]
    Suppose $\mu$ is a compactly supported probability measure on $(\R^d)^T$. Then there exists a constant $C>0$ that depends only on $d, p, \sigma, \supp(\mu)$ such that
    \begin{align}
        \w_p((\mu\ast \n_{\sigma})_{x_{1:t}}, (\mu\ast \n_{\sigma})_{y_{1:t}})
        \le C\abs{x_{1:t}-y_{1:t}}
    \end{align}
    for all $x, y\in (\R^d)^T$ and $t\in \{1,2,\ldots, T-1\}$. Here, $x_{1:t}$ denotes the first $t$ coordinates of $x$ and $(\mu\ast \n_{\sigma})_{x_{1:t}}$ is a probability measure on $\R^d$ defined via
    \begin{align}
        (\mu\ast \n_{\sigma})_{x_{1:t}}(dx_{t+1})
        :=\P(X_{t+1}\in dx_{t+1} \,|\, X_1= x_1, \cdots, X_t=x_t)
    \end{align}
    for $X\sim \mu\ast\n_{\sigma}$.
\end{prop}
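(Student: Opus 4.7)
My plan is to connect $(\mu \ast \n_\sigma)_{x_{1:t}}$ and $(\mu \ast \n_\sigma)_{y_{1:t}}$ via the linear interpolation $x(s) := x_{1:t} + s(y_{1:t} - x_{1:t})$, $s \in [0,1]$, of the conditioning, and bound the $\w_p$-length of the induced curve of probability densities on $\R^d$ by a suitable velocity field satisfying the continuity equation.

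First, Bayes' rule gives the density $q_x(z) = h_{t+1}(x,z)/h_t(x)$, where $h_s(u) = \int \prod_{i=1}^s \varphi_\sigma(u_i - y_i)\,\mu^{(1:s)}(dy_{1:s})$ and $\varphi_\sigma$ is the density of $\n_\sigma$. A direct computation shows that each component of $\nabla_{x_{1:t}} \log q_x(z)$ is of the form $\sigma^{-2}(\E[Y_i^k \st X_{1:t}=x, X_{t+1}=z] - \E[Y_i^k \st X_{1:t}=x])$, and is therefore uniformly bounded by $2\diam(\supp(\mu))/\sigma^2$ since $Y$ takes values in $\supp(\mu)$. Moreover, using the representation $q_x = \hat\mu_x \ast \n_\sigma$ with $\hat\mu_x$ supported in the $(t+1)$-st coordinate projection of $\supp(\mu)$, one obtains uniform two-sided Gaussian-type bounds on $q_x(z)$.

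Setting $q_s := q_{x(s)}$, one has $\partial_s q_s(z) = q_s(z)\alpha_s(z)$ with $\alpha_s(z) := \nabla_{x_{1:t}} \log q_{x(s)}(z) \cdot (y_{1:t} - x_{1:t})$, which is uniformly bounded by $C\abs{x_{1:t}-y_{1:t}}$ and has mean zero under $q_s$. I would solve the continuity equation $\partial_s q_s + \nabla_z \cdot (q_s v_s) = 0$ by a Knothe-Rosenblatt-type construction: factorize $q_s(z) = \prod_{k=1}^d q_s^{(k)}(z_k \st z_{<k})$ so that $\partial_s \log q_s = \sum_k \partial_s \log q_s^{(k)}$, and for each $k$ solve the 1D continuity equation in the $z_k$-direction. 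The source $\alpha_s^{(k)} := \partial_s \log q_s^{(k)}(z_k \st z_{<k})$ has mean zero in $z_k$ (since conditional densities integrate to one) and, arising from conditional averaging of $\alpha_s$, remains uniformly bounded by $2C\abs{x_{1:t}-y_{1:t}}$. Integrating from either endpoint and using the mean-zero property, one obtains $\abs{v_s^k(z)\, q_s^{(k)}(z_k\st z_{<k})} \le C\abs{x_{1:t}-y_{1:t}} \min(F_s^{(k)}(z_k\st z_{<k}),\, 1 - F_s^{(k)}(z_k\st z_{<k}))$, where $F_s^{(k)}$ is the conditional CDF.

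Combining this with the two-sided Gaussian estimates on $q_s$ (which propagate to each $q_s^{(k)}$, as every conditional is still a Gaussian convolution of a compactly supported measure) yields $\norm{v_s}_{L^p(q_s)} \le C\abs{x_{1:t}-y_{1:t}}$ uniformly in $s\in[0,1]$, and the Benamou-Brenier-type bound gives $\w_p((\mu\ast\n_\sigma)_{x_{1:t}},(\mu\ast\n_\sigma)_{y_{1:t}}) \le \int_0^1 \norm{v_s}_{L^p(q_s)}\,ds \le C\abs{x_{1:t}-y_{1:t}}$. The main obstacle I anticipate is the coordinate-wise tail analysis: one must verify that integrals of the form $\int \min(F,1-F)^p / q^{p-1}\,d\zeta$ are finite uniformly in the conditioning and in $s$, by combining the Gaussian upper and lower bounds on the conditional densities with Mill's ratio-type estimates, while carefully tracking the dependence of $C$ on $d$, $p$, $\sigma$, and $\supp(\mu)$.
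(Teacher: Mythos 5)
Your first two steps are sound and in fact coincide with the core computation in the paper's Lemma \ref{lem:dualnormcontrol}: interpolating the conditioning point, you correctly identify $\partial_s\log q_s(z)$ as $\sigma^{-2}\big(\E[Y_{1:t}\mid X_{1:t}=x(s),X_{t+1}=z]-\E[Y_{1:t}\mid X_{1:t}=x(s)]\big)\cdot(y_{1:t}-x_{1:t})$, which is bounded by $\sigma^{-2}\diam(\supp\mu)\,\abs{x_{1:t}-y_{1:t}}$ in the compact case, and the conditional-mean identity also gives your bound on $\alpha_s^{(k)}$ and its mean-zero property. The genuine gap is in the step that converts this into a $\w_p$ bound: the coordinate-wise field you construct does \emph{not} solve the continuity equation. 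Writing $q_s(z)=q_s^{(\le k)}(z_{\le k})\,q_s^{(>k)}(z_{>k}\mid z_{\le k})$ and taking $v_s^k(z_{\le k})$ defined by integrating the one-dimensional equation $\partial_{z_k}\big(q_s^{(k)}v_s^k\big)=-q_s^{(k)}\alpha_s^{(k)}$, one gets $\sum_k\partial_{z_k}(q_sv_s^k)=-q_s\alpha_s+\sum_k q_s^{(\le k)}v_s^k\,\partial_{z_k}q_s^{(>k)}(z_{>k}\mid z_{\le k})$, and the second sum does not vanish because the conditionals of the later coordinates depend on $z_k$. The correct triangular (Knothe-flow) velocity must include advection corrections, $v_s^k=-\big(\partial_sF_s^{(k)}+\sum_{j<k}v_s^j\,\partial_{z_j}F_s^{(k)}\big)/q_s^{(k)}$, and bounding these extra terms requires estimates on $\partial_{z_j}F_s^{(k)}(z_k\mid z_{<k})$ — i.e.\ Lipschitz-kernel-type bounds for $q_s$ conditioned within $\R^d$ — which your proposal neither states nor proves, and whose contributions compound across coordinates. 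As written, the Benamou--Brenier inequality cannot be invoked, so the chain of estimates breaks at this point.

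For comparison, the paper avoids any dynamic construction: it converts the same $s$-derivative bound into a $\w_p$ bound statically, via Proposition \ref{prop:dualsobolev}, bounding $\w_p$ by the dual homogeneous Sobolev norm over test functions with $\norm{\nabla\psi}_{L^{p'}(\n_{\sigma\eta};\R^d)}\le1$, after establishing a pointwise lower bound on the density of the kernel with respect to the Gaussian reference measure (Lemmas \ref{lem:lbofdensity} and \ref{lem:dualnorm}); the Gaussian Poincar\'e inequality then supplies exactly the control your approach would need from an elliptic or Knothe construction. If you want to salvage your route, either carry out the full Knothe-flow estimates (including the $\partial_{z_j}F_s^{(k)}$ terms), or define $v_s=\nabla\phi_s$ with $\nabla\cdot(q_s\nabla\phi_s)=-q_s\alpha_s$ and use a Poincar\'e inequality for $q_s$ (available here since $q_s$ is a Gaussian mixture with compactly supported mixing measure) — but the latter is essentially the paper's dual-norm argument in disguise.
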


In Proposition \ref{prop:lipkernel} below we extend this result to subgaussian measures, showing that a smoothed subgaussian measure has \textit{locally} Lipschitz kernels. Since Lipschitz kernels naturally arise in many applications and are well-studied \cite{backhoff2022estimating, acciaio2024convergence, blanchet2024bounding, eder2019compactness}, we believe that Proposition \ref{prop:lipkernel} is of independent interest.

\subsection{Applications}\label{sec:appl}

We now present a concrete application of the fast rate \eqref{eq:fast_rate}, which is taken from \cite{blanchet2024empirical}. Consider a probability measure $\mu$ on $(\R^d)^2$. Recall that we call $\mu$ a martingale measure, if $\E[X_2|X_1]=X_1$ holds for $(X_1, X_2)\sim \mu$. Closely following \cite{blanchet2024empirical}, let us define the \textit{smoothed martingale projection distance (SMPD)} via
\begin{align}
    \text{SMPD}(\mu, p)
    :=\inf\big\{\a\w_p(\mu\ast \xi, \nu) \,|\, \nu \text{ is a martingale measure} \big\}.
\end{align}
Here we define $\xi$ as the law of $(Z_1, Z_1+Z_2)$, where $(Z_1, Z_2)$ are two independent Gaussian random variables with mean $0$ and covariance matrix $\text{I}_{d}$. $ \text{SMPD}(\mu, p)$ measures the $\a\w_p$-distance between $\mu\ast \xi$ and the space of martingale measures, and is used to test whether $\mu$ is a martingale measure. While \cite{blanchet2024empirical} offers an in-depth analysis of this distance, let us emphasize here that $\mu$ is a martingale if and only if $\text{SMPD}(\mu, p)=0$. In particular, to test the martingale hypothesis using i.i.d samples, we aim to bound the probability
\begin{align}
    \P\left(\abs{\text{SMPD}(\mu, p)-\text{SMPD}(\mhat{\mu}_n, p)}>\alpha\right)
\end{align}
for $\alpha>0$. By the triangle inequality,
\begin{align}
    \abs{\text{SMPD}(\mu, p)-\text{SMPD}(\mhat{\mu}_n, p)}
    \le \a\w_p(\mu\ast \xi, \mhat{\mu}_n\ast \xi).
\end{align}
Combined with the Markov inequality leads to
\begin{align}
    \P\left(\abs{\text{SMPD}(\mu, p)-\text{SMPD}(\mhat{\mu}_n, p)}>\alpha\right)
    \le \frac{\E[\a\w_p(\mu\ast \xi, \mhat{\mu}_n\ast \xi)]}{\alpha}.\label{eq:testmartingale}
\end{align}
The fast rate \eqref{eq:fast_rate} applied to the right hand side of \eqref{eq:testmartingale} establishes the convergence rate $n^{-1/2}$,  which is independent of the dimension $d$.

\subsection{Notation and preparations}\label{sec:nota}
We close this section by setting up notation in Section \ref{sec:nota}.  As mentioned above, $\abs{\cdot}$ is the Euclidean norm, and we denote the scalar (dot) product by $\cdot$. Throughout the paper, $T\ge 2$ is the number of time steps and $d\ge 1$ is the dimension of the state space. The H\"older conjugate of $p$ is denoted by $p'$, i.e., $1/p+1/p'=1$.

For any Borel set $A$ in Euclidean space, the set of all Borel probability measures on $A$ is denoted by $\sp(A)$. For $1\le p<\infty$, $\sp_p(A)$ is the set of all $\mu\in \sp(A)$ that have finite $p$-moments, i.e., $\int \abs{x}^p \mu(dx)<\infty$. For a measure $\mu$, we denote the pushforward measure of $\mu$ under a Borel function $T$ by $T_{\#}\mu$, i.e., $T_{\#}\mu(A)=\mu(\{x : T(x)\in A\})$ for all Borel sets $A$.

Given $X\sim \mu\in \sp((\R^d)^T)$, we denote the mean of $X$ by $\mean(\mu):=\int x \mu(dx)$. Also, the trace of the covariance matrix of $X$ is denoted by $\var(\mu):=\int \abs{x-\mean(\mu)}^2 \mu(dx)$. For $r>0$, we define $M_{r}(\mu):=\int \abs{x}^r \mu(dx)$ and $\e_{r}(\mu):=\int e^{r\abs{x}^2}\mu(dx)$.

For $x\in (\R^d)^T$ and $t\in \{1,2,\ldots, T\}$, we use the shorthand notation $x_t$ to denote the $t$-coordinate of $x$ and $x_{1:t}:=(x_1, \ldots, x_t)$. In particular, $x_{1:T}=x$. Similarly, given $\mu\in \sp((\R^d)^T)$, we write $\mu_{t}$ for the projection of $\mu$ onto the $t$-coordinate and $\mu_{1:t}$ for the projection of $\mu$ onto the first $t$-coordinates. Precisely speaking, $\mu_t$=$P^t_{\#}\mu$ and $\mu_{1:t}=P^{1:t}_{\#}\mu$ for $P^t(x)=x_t$ and $P^{1:t}(x)=x_{1:t}$.

Given $x\in (\R^d)^T$ and $\mu\in \sp((\R^d)^T)$, recall that a disintegration(or kernel) of $\mu$ is a measure $\mu_{x_{1:t}}\in \sp(\R^d)$, $t\in \{1,2,\ldots, T\}$ which is defined via $\mu_{x_{1:t}}(dx_{t+1})=\P(X_{t+1}\in dx_{t+1} \st X_{1:t}=x_{1:t})$ for $X\sim \mu\in \sp((\R^d)^T)$.

Given $\sigma>0$, we write $\varphi_{\sigma}:(\R^d)^T\to \R$ for the Gaussian density, i.e., $\varphi_{\sigma}(x)=(2\pi\sigma^2)^{-dT/2}e^{-\abs{x}^2/2}$. It is the density of the centered Gaussian measure on $(\R^d)^T$ with covariance matrix $\sigma^2 \text{I}_{dT}$, which is denoted by $\n_{\sigma}$. Given $\mu\in \sp((\R^d)^T)$, $\mu\ast \n_{\sigma}$ is a convolution of $\mu$ and $\n_{\sigma}$, i.e., $\mu\ast \n_{\sigma}(A)=\int \n_{\sigma}(A-x)\mu(dx)$ for all measurable $A$. Note that $\mu\ast \n_{\sigma}$ has density $x\mapsto \varphi_{\sigma}\ast \mu(x)=\int \varphi_{\sigma}(x-y)\mu(dy)$. We use the shorthand notation $\mu^{\sigma}:=\mu\ast \n_{\sigma}$. For $t\in \{1,2,\ldots, T\}$, we abuse notation and write $\varphi_{\sigma}(x_{1:t})=(2\pi\sigma^2)^{-dt/2}e^{-\abs{x_{1:t}}^2/2}$. Similarly, $\n_{\sigma}$ can mean a centered Gaussian measure on $(\R^d)^t$ with covariance matrix $\sigma^2\text{I}_{dt}$ depending on the context. Following the same reasoning, the density of $(P^{1:t}_{\#}\mu)^{\sigma}= (P^{1:t}_{\#}\mu)\ast \n_{\sigma}$ is denoted by $x_{1:t}\mapsto \varphi_{\sigma}\ast \mu(x_{1:t})= \int \varphi_{\sigma}(x_{1:t}-y_{1:t})\mu(dy_{1:t})$.

For a probability measure $\mu$ and i.i.d samples $X^{(1)}, X^{(2)}, \ldots, X^{(n)}$ of $\mu$, we define the empirical measure of $\mu$ via $\mhat{\mu}_n = \frac{1}{n}\sum_{j=1}^n \delta_{X^{(j)}}$. Note that this is a measure-valued random variable. Adopting the same notation as above, we write $\mhat{\mu}^{\sigma}_n$ for $\mhat{\mu}_n\ast \n_{\sigma}$.

Let $\mu\in \sp(A)$ for some Borel set $A$ in a Euclidean space. For $1\le p< \infty$, $L^p(\mu;\R^k)$ is the set of all functions $f:A\to \R^k$ such that $\norm{f}_{L^p(\mu;\R^k)}:=(\int_{A} \abs{f}^p d\mu)^{1/p}<\infty$. When $k=1$, we often write $L^p(\mu)\:=L^p(\mu;\R^k)$. We denote by $C_c^{\infty}(\R^N)$ the set of all smooth functions $h:\R^N\to \R$ with compact support. For the multi-index $\alpha=(\alpha_1, \ldots, \alpha_N)$ and $h\in \R^N\to \R$, we write $\abs{\alpha}=\sum_{j=1}^N \alpha_j$ and denote the $\alpha$-th derivative of $h$ by $\p^{\alpha}h(x)=\frac{\p^{\alpha_1}}{\p x_{1}^{\alpha_1}}\frac{\p^{\alpha_2}}{\p x_{2}^{\alpha_2}}\cdots \frac{\p^{\alpha_N}}{\p x_{N}^{\alpha_N}}h(x)$.

If $\gamma$ is a finite signed measure on $\R^N$ and $\f$ is a class of functions on $\R^N$, we identify $\gamma$ with the linear functional $g\mapsto \gamma(g):=\int g d\gamma$ on $\f$ and denote its $\ell^{\infty}(\f)$-norm by $\norm{\gamma}_{\f}:=\sup\{\gamma(g) : g\in \f\}$.

\section{Proof of Theorem \ref{thm:fast_rate}}\label{sec:proof}

Unless otherwise stated, the parameters $1<p<\infty$ and $\sigma>0$ are fixed throughout the remainder of this note. Furthermore we always assume that $T\ge 2$ and $d\ge 1$.

\subsection{Outline of the proof}\label{sec:outline proof}

The proof of Theorem~\ref{thm:fast_rate} is divided into three parts: Section~\ref{sec:lipkernel}, Section~\ref{sec:dpp} and Section~\ref{sec:emp}. Before we proceed with a rigorous proof, let us sketch the main steps. For simplicity of exposition, we assume that $T=2$ and that $\mu$ is compactly supported (only) in this section.

\textbf{Step 1:} We show in Proposition~\ref{prop:lipkernel} in Section~\ref{sec:lipkernel},  that smoothed measures have locally Lipschitz kernels. In particular, when $T=2$ and $\mu$ is compactly supported, there exists a constant $C>0$ such that
\begin{align}
    \w_p((\mu^{\sigma})_{x_1}, (\mu^{\sigma})_{y_1})
    \le C\abs{x_1-y_1} \text{ for all } x_1, y_1\in \R^d.\label{eq:sketch lip}
\end{align}
Proposition~\ref{prop:lipkernel} follows from combining Lemma~\ref{lem:dualnorm} and Lemma~\ref{lem:dualnormcontrol}: Lemma~\ref{lem:dualnorm} shows that if a measure $\gamma$ is absolutely continuous with respect to the Lebesgue measure, then
\begin{align}
    \w_p((\mu^{\sigma})_{x_1}, \gamma)
    \le C \norm{(\mu^{\sigma})_{x_1}-\gamma}_{\f},\label{eq:sketch dualnorm}
\end{align}
where $\f$ is a class of test functions specified in Lemma \ref{lem:dualnorm}. Setting $\gamma=(\mu^{\sigma})_{y_1}$, this implies that
\begin{align}
    \w_p((\mu^{\sigma})_{x_1}, (\mu^{\sigma})_{y_1})
    \le C \norm{(\mu^{\sigma})_{x_1}-(\mu^{\sigma})_{y_1}}_{\f}.
\end{align}
In Lemma~\ref{lem:dualnormcontrol}, we apply a Taylor expansion to show that
\begin{align}
    \norm{(\mu^{\sigma})_{x_1}-(\mu^{\sigma})_{y_1}}_{\f}
    \le C\abs{x_{1}-y_{1}}.
\end{align}

\textbf{Step 2:} In Section~\ref{sec:dpp}, we first recall the dynamic programming principle (DPP) for $\a\w_p$ in Proposition~\ref{prop:dpp}. When $T=2$, it can be stated as
\begin{align}
    \a\w^{(\sigma)}_p(\mu, \mhat{\mu}_n)^p
    =\inf_{\gamma\in \cpl((\mu^{\sigma})_1, (\mhat{\mu}^{\sigma}_n)_1)}\int \abs{x_1-y_1}^p +\w_p((\mu^{\sigma})_{x_1}, (\mhat{\mu}^{\sigma}_n)_{y_1})^p \gamma(dx_1, dy_1).\label{eq:sketch dpp}
\end{align}
Using \eqref{eq:sketch lip} together with the triangle inequality, we find
\begin{align}
    \w_p((\mu^{\sigma})_{x_1}, (\mhat{\mu}^{\sigma}_n)_{y_1})^p
    &\le C \w_p((\mu^{\sigma})_{x_1}, (\mu^{\sigma})_{y_1})^p
    +C\w_p((\mu^{\sigma})_{y_1}, (\mhat{\mu}^{\sigma}_n)_{y_1})^p\\
    &\le C \abs{x_1-y_1}^p
    +C \w_p((\mu^{\sigma})_{y_1}, (\mhat{\mu}^{\sigma}_n)_{y_1})^p.\label{eq:sketch W_p compact}
\end{align}
Plugging~\eqref{eq:sketch W_p compact} into \eqref{eq:sketch dpp} and choosing $\gamma$ as an optimal coupling for $\w_p^{(\sigma)}(\mu_1, (\mhat{\mu}_n)_1)$,
\begin{align}
    \a\w^{(\sigma)}_p(\mu, \mhat{\mu}_n)
    \le C\w^{(\sigma)}_p(\mu_1, (\mhat{\mu}_n)_1)
    +C\left(\int \w_p((\mu^{\sigma})_{y_1}, (\mhat{\mu}^{\sigma}_n)_{y_1})^p \mhat{\mu}^{\sigma}_n(dy)\right)^{1/p}.\label{eq:sketch dppbound}
\end{align}
Section~\ref{sec:dpp} is devoted to establishing Lemma~\ref{lem:dppbound}, which is an analogue of \eqref{eq:sketch dppbound} for subgaussian measures  $\mu$ that are not necessarily compactly supported. Let us emphasize here that if $\mu$ is not compactly supported, the kernel of $\mu^{\sigma}$ is \textit{locally} Lipschitz. Consequently, the estimate~\eqref{eq:sketch lip} may fail and estimates~\eqref{eq:sketch W_p compact} and~\eqref{eq:sketch dppbound} have to be modified accordingly. 

\textbf{Step 3:} In Section~\ref{sec:emp}, we complete the proof of Theorem~\ref{thm:fast_rate}. The main idea is to partition $\R^d$ into disjoint sets $(E_k)_{k=0}^{\infty}$ and apply empirical process theory (see Lemma~\ref{lem:emp}). By taking expectations in~\eqref{eq:sketch dppbound} and using \eqref{eq:sketch dualnorm}, we can show that
\begin{align}
    \E[\a\w^{(\sigma)}_p(\mu, \mhat{\mu}_n)]
    &\le C\E[\w^{(\sigma)}_p(\mu_1, (\mhat{\mu}_n)_1)]
    +C\E\left[\left(\int \w_p((\mu^{\sigma})_{y_1}, (\mhat{\mu}^{\sigma}_n)_{y_1})^p \mhat{\mu}^{\sigma}_n(dy)\right)^{1/p}\right]\\
    &\le C\E[\w^{(\sigma)}_p(\mu_1, (\mhat{\mu}_n)_1)]
    +C\E\left[\left(\int \norm{(\mu^{\sigma})_{y_1}- (\mhat{\mu}^{\sigma}_n)_{y_1}}^p_{\f} \mhat{\mu}^{\sigma}_n(dy)\right)^{1/p}\right]\\
    &\le C\E[\w^{(\sigma)}_p(\mu_1, (\mhat{\mu}_n)_1)]
    +C\sum_{k=0}^{\infty}\E\left[\left(\int_{E_k} \norm{(\mu^{\sigma})_{y_1}- (\mhat{\mu}^{\sigma}_n)_{y_1}}^p_{\f} \mhat{\mu}^{\sigma}_n(dy)\right)^{1/p}\right].
\end{align}
In Lemma~\ref{lem:dpp norm bound}, we further refine this estimate and show that
\begin{align}
    \E[\a\w^{(\sigma)}_p(\mu, \mhat{\mu}_n)]
    \le Cn^{-1/2}+C\sum_{k=0}^{\infty}f(k)\E[\norm{\mu-\mhat{\mu}_n}_{\h_k}]\label{eq:sketch dpp norm bound}
\end{align}
for some class of functions $\h_k$ and a function $f$. In Lemma~\ref{lem:h bound}, we apply empirical process theory to \eqref{eq:sketch dpp norm bound} to obtain that
\begin{align}
    \E[\norm{\mu-\mhat{\mu}_n}_{\h_k}]\le Cn^{-1/2}\sum_{\ell=0}^{\infty}g(k, \ell)
\end{align}
for some function $g$. As a final step, we show how the moment condition of $\mu$ implies that the series
\begin{align}
    \sum_{k=0}^{\infty}f(k)\sum_{\ell=0}^{\infty}g(k, \ell)
\end{align}
converges.

To improve readability, technical lemmas involving computations or estimates of Gaussian kernels are deferred to Appendix~\ref{appendix:technical lemmas}.

\subsection{Step 1: Regularity of kernels of smoothed measures}\label{sec:lipkernel}

This section is mainly devoted to the proof of Proposition \ref{prop:lipkernel}. As mentioned in Section \ref{sec:intro}, a subgaussian measure convolved with Gaussian noise has locally Lipschitz kernels. Moreover, if the measure is compactly supported, then its kernels are Lipschitz.

\begin{prop}[Kernels of a smoothed measure]\label{prop:lipkernel}
    Let $0<\beta<1/p'$. Suppose that $\mu\in \sp((\R^d)^T)$ satisfies $\e_{q/(2\sigma^2)}(\mu)<\infty$ for $q>2(p-1)/\beta$. Then there exists a constant $C>0$ that depends only on $d, p, \sigma, q, \beta, \e_{q/(2\sigma^2)}(\mu)$ such that for all $x,y\in (\R^d)^T$ and $t\in \{1,2,\ldots, T-1\}$,
    \begin{align}
        \w_p((\mu^{\sigma})_{x_{1:t}}, (\mu^{\sigma})_{y_{1:t}})
        \le C e^{\frac{\beta}{\sigma^2}\left(\abs{x_{1:t}-\mean(\mu_{1:t})}\vee \abs{y_{1:t}-\mean(\mu_{1:t})}\right)^2}
        \abs{x_{1:t}-y_{1:t}}.\label{eq: exp lipkernel}
    \end{align}
    In particular, if $\mu$ is compactly supported,
    \begin{align}
        \w_p((\mu^{\sigma})_{x_{1:t}}, (\mu^{\sigma})_{y_{1:t}})
        \le C \abs{x_{1:t}-y_{1:t}}
    \end{align}
    for some constant $C>0$ that depends only on $d, p, \sigma, \supp(\mu)$.
\end{prop}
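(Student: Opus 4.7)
The plan is to leverage the Bayesian structure of the smoothed measure and establish Lipschitz dependence in $x_{1:t}$ via a Benamou--Brenier / continuity-equation argument. Writing $u = x_{1:t}$ and $u' = y_{1:t}$, I view $X = Y + Z$ with $Y \sim \mu$ and $Z \sim \n_\sigma$ independent, so that $(\mu^\sigma)_u$ is the conditional law of $X_{t+1}$ given $X_{1:t} = u$, with density
\begin{equation*}
f_u(v) = \frac{\int \varphi_\sigma(u - y_{1:t}) \varphi_\sigma(v - y_{t+1}) \mu_{1:t+1}(dy)}{h(u)}, \qquad h(u) := \int \varphi_\sigma(u - y_{1:t}) \mu_{1:t}(dy_{1:t}).
\end{equation*}
A direct Bayesian computation yields the Tweedie-type identity
\begin{equation*}
\partial_u f_u(v) = \sigma^{-2} f_u(v) \bigl(\E[Y_{1:t} \mid X_{1:t} = u, X_{t+1} = v] - \E[Y_{1:t} \mid X_{1:t} = u]\bigr).
\end{equation*}

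Next, along the linear path $u_s = (1 - s) u + s u'$, the Benamou--Brenier formula gives
\begin{equation*}
\w_p\bigl((\mu^\sigma)_u, (\mu^\sigma)_{u'}\bigr) \le \int_0^1 \|w_s\|_{L^p(f_{u_s})} \, ds
\end{equation*}
for any vector field $w_s \colon \R^d \to \R^d$ satisfying $\partial_s f_{u_s} + \nabla_v \cdot (w_s f_{u_s}) = 0$. Using Stein's lemma for $\varphi_\sigma(v - y_{t+1})$, the $u$-derivative above can be recast as a $v$-divergence, producing an explicit vector field of the form $w_s(v) = \Psi_{u_s}(v) (u' - u)$, where $\Psi_u(v)$ is a matrix-valued function expressible through conditional covariances of $(Y_{1:t}, Y_{t+1})$ under the joint posterior given $(X_{1:t}, X_{t+1}) = (u, v)$.

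Bounding $\|w_s\|_{L^p(f_{u_s})}$ then reduces to controlling $L^p$-moments of these conditional covariances under $(\mu^\sigma)_{u_s}$. In the compactly supported case, all such moments are bounded by the diameter of $\supp(\mu)$, yielding the uniform Lipschitz constant. In the subgaussian case, I would apply H\"older's inequality to the likelihood ratio $\varphi_\sigma(u - \cdot)/h(u)$ together with the hypothesis $\e_{q/(2\sigma^2)}(\mu) < \infty$: the requirement $q > 2(p-1)/\beta$ ensures the resulting moment bound carries the exponential factor $\exp\bigl(\tfrac{\beta}{\sigma^2}(|u - \mean(\mu_{1:t})| \vee |u' - \mean(\mu_{1:t})|)^2\bigr)$, while the restriction $\beta < 1/p'$ keeps the H\"older conjugate admissible.

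The main obstacle is the sharp construction of $w_s$ from Stein-type identities and, in the subgaussian case, the careful bookkeeping of H\"older exponents needed to extract the exponential weight from the moment bounds. Once these steps are carried out, integrating in $s \in [0,1]$ delivers the claimed Lipschitz estimate, and the compactly supported case follows as a corollary upon noting that the supports of all relevant conditionals remain bounded.
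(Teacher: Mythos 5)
Your starting point is sound and in fact coincides with the paper's: the Tweedie-type identity $\partial_u f_u(v)=\sigma^{-2}f_u(v)\bigl(\E[Y_{1:t}\mid X_{1:t}=u,X_{t+1}=v]-\E[Y_{1:t}\mid X_{1:t}=u]\bigr)$ is exactly what the paper obtains in Lemma \ref{lem:dualnormcontrol} when differentiating $g(s)=\varphi_{\sigma}\ast\mu(x_{1:t}(s),z_{t+1})/\varphi_{\sigma}\ast\mu(x_{1:t}(s))$ along the segment. The genuine gap is the step you yourself flag as the ``main obstacle'': the claim that Stein's lemma recasts this $u$-derivative as a $v$-divergence with an \emph{explicit} velocity field $w_s(v)=\Psi_{u_s}(v)(u'-u)$, where $\Psi$ is built from conditional covariances of $(Y_{1:t},Y_{t+1})$ given $(X_{1:t},X_{t+1})=(u,v)$. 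No such closed form holds in general: testing the natural candidate $\Psi_u(v)=\sigma^{-2}\mathrm{Cov}(Y_{t+1},Y_{1:t}\mid u,v)$, its $v$-divergence generates second-moment terms of the type $\E\bigl[(y_{1:t}\cdot(u'-u))\,y_{t+1}\cdot(v-y_{t+1})\,\varphi_{\sigma}(v-y_{t+1})\bigr]$ that do not collapse to the required first-moment expression for arbitrary $\mu$; already in $d=1$ the actual solution of the continuity equation is $(w f_u)(v)=-\sigma^{-2}\,\mathrm{Cov}_{\tilde\mu}\bigl(Y_{1:t}\cdot(u'-u),\,\Phi((v-Y_{t+1})/\sigma)\bigr)$, a Gaussian-CDF covariance under the posterior given $X_{1:t}=u$ alone, not of the claimed form; and for $d\ge 2$ there is no canonical explicit field at all -- one must solve a weighted elliptic problem.

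Even granting some admissible $w_s$, the analytic crux is bounding $\norm{w_s}_{L^p(f_{u_s})}$, which is precisely a negative homogeneous Sobolev-to-$\w_p$ comparison; this is not bookkeeping but the heart of the proof. The paper handles it by importing \cite[Proposition 2.1]{goldfeld2024limit} (Proposition \ref{prop:dualsobolev}), whose application hinges on a pointwise lower bound of the kernel density relative to the Gaussian reference $\n_{\sigma\eta}$ -- established in Lemma \ref{lem:dualnorm} via a reverse H\"older inequality -- together with the Gaussian Poincar\'e inequality \eqref{eq:poincareconstant} and the moment estimates of Lemmas \ref{lem:lbofdensity} and \ref{lem:gaussiancomputation}, which is where the constraints $\beta<1/p'$ and $q>2(p-1)/\beta$ actually enter and where the factor $e^{\frac{\beta}{\sigma^2}(\abs{x_{1:t}-\mean(\mu_{1:t})}\vee\abs{y_{1:t}-\mean(\mu_{1:t})})^2}$ is extracted. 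Your proposal contains no analogue of the density lower bound and leaves the H\"older bookkeeping unexecuted, so as written it reduces the proposition to an unproved estimate rather than proving it. The compactly supported case as a corollary is fine in spirit (the paper gets it by letting $\beta\to0$ in the constants), but only once the general estimate is actually in place.
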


The proof of Proposition~\ref{prop:lipkernel} is deferred to the end of this section. The proof is based on two lemmas: Lemma~\ref{lem:dualnorm} and Lemma~\ref{lem:dualnormcontrol}. We first recall \cite[Proposition $2.1$]{goldfeld2024limit} which shows that the Wasserstein distance $\w_p$ can be bounded above by the dual Sobolev norm.

\begin{prop}[Proposition $2.1$ in \cite{goldfeld2024limit}]\label{prop:dualsobolev}
    Let $1\le p<\infty$ and suppose that $\mu_0, \mu_1\in \sp(\R^d)$ with $\mu_0, \mu_1\ll \rho$ for some reference measure $\rho\in \sp(\R^d)$. Denote their respective densities by $f_i=d\mu_i/d\rho$, $i=0,1$. If $f_0$ or $f_1$ is bounded from below by some $c>0$, then
    \begin{align}\label{eq:dualsobolev}
        \w_p(\mu_0, \mu_1)
        \le p c^{-1/p'}
        \sup\left\{(\mu_1-\mu_0)(\psi) : \psi\in C^{\infty}_c(\R^d), \norm{\nab \psi}_{L^{p'}(\rho; \R^d)} \le 1\right\}.
    \end{align}
\end{prop}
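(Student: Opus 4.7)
The plan is to apply the Benamou--Brenier dynamic formulation of $\w_p$ to a reparametrized linear interpolation between $\mu_0$ and $\mu_1$, then invoke $L^p$--$L^{p'}$ duality to identify the resulting kinetic action with the dual Sobolev functional on the right-hand side of \eqref{eq:dualsobolev}.

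First, set $N := \sup\{(\mu_1-\mu_0)(\psi) : \psi \in C_c^\infty(\R^d),\ \norm{\nab\psi}_{L^{p'}(\rho;\R^d)} \le 1\}$, so the target inequality reads $\w_p(\mu_0,\mu_1) \le p c^{-1/p'} N$. The map $\nab\psi \mapsto (\mu_1-\mu_0)(\psi)$ is well-defined on the subspace $\{\nab\psi : \psi \in C_c^\infty(\R^d)\}$ of $L^{p'}(\rho;\R^d)$ (the quotient by constants is harmless since $\mu_0,\mu_1$ are probability measures) and has operator norm at most $N$. By Hahn--Banach extension and the isometric duality $L^{p'}(\rho;\R^d)^{*}\cong L^p(\rho;\R^d)$, I obtain a vector field $v\in L^p(\rho;\R^d)$ with $\norm{v}_{L^p(\rho;\R^d)}\le N$ satisfying $(\mu_1-\mu_0)(\psi) = \int \nab\psi \cdot v\, d\rho$ for every $\psi\in C_c^\infty(\R^d)$.

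Next, I would set $s(t) := 1 - (1-t)^p$ and define the curve $\mu_t := f_t \rho$ with $f_t := (1 - s(t))f_0 + s(t) f_1$, together with the velocity $v_t := s'(t)\, v/f_t = p(1-t)^{p-1}\, v/f_t$. Differentiating $t\mapsto \mu_t(\psi)$ and using the defining identity for $v$ gives $\tfrac{d}{dt}\mu_t(\psi) = s'(t)(\mu_1-\mu_0)(\psi) = \int \nab\psi \cdot v_t\, d\mu_t$, so $(\mu_t,v_t)$ solves $\p_t\mu_t + \nab\cdot(v_t\mu_t) = 0$ weakly on $[0,1]\times\R^d$ with endpoints $\mu_0$ and $\mu_1$. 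The exponent $p$ in the time change is chosen precisely to balance the degeneracy $f_t \ge c(1-t)^p$ (coming from $f_0\ge c$) against $s'(t)^p = p^p(1-t)^{p(p-1)}$:
\[
\int \abs{v_t}^p d\mu_t = s'(t)^p \int \abs{v}^p f_t^{1-p} d\rho \le p^p(1-t)^{p(p-1)} c^{1-p}(1-t)^{-p(p-1)} \norm{v}_{L^p(\rho;\R^d)}^p = p^p c^{1-p}\norm{v}_{L^p(\rho;\R^d)}^p,
\]
uniformly in $t\in(0,1)$. Applying the Benamou--Brenier inequality $\w_p(\mu_0,\mu_1)^p \le \int_0^1\int \abs{v_t}^p d\mu_t\, dt$ and taking $p$-th roots then yields $\w_p(\mu_0,\mu_1)\le p c^{-(p-1)/p}\norm{v}_{L^p(\rho;\R^d)} = p c^{-1/p'}\norm{v}_{L^p(\rho;\R^d)}\le p c^{-1/p'} N$, which is exactly \eqref{eq:dualsobolev}.

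The main obstacle will be to rigorously justify the Benamou--Brenier inequality for this pair, since $\rho$ is an arbitrary reference measure and $v$ is merely $L^p(\rho)$-integrable, so $v_t$ is not smooth and $\mu_t$ need not have a Lebesgue density. I would handle this by approximation: mollify $v$ by convolution with a smooth kernel of scale $\ep\downarrow 0$ to obtain a smooth $v^\ep$, let $f_t^\ep$ be the corresponding smooth interpolating densities, apply the classical Benamou--Brenier formula to the resulting smooth curve (for which the kinetic action admits the transport-plan upper bound), and pass to the limit in the uniform bound above using lower semicontinuity of $\w_p$. The symmetric hypothesis $f_1\ge c$ in place of $f_0\ge c$ is handled by reversing time ($t\mapsto 1-t$) in the construction, which interchanges the roles of $\mu_0$ and $\mu_1$.
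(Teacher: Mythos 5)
The paper itself gives no proof of Proposition~\ref{prop:dualsobolev}; it is imported verbatim from \cite{goldfeld2024limit}. Your argument is, in substance, the standard dynamic proof of that result (Peyre's $\dot H^{-1}$ argument for $p=2$, generalized to $p$), and the core of it is sound: the Hahn--Banach extension plus the duality $L^{p'}(\rho;\R^d)^{*}\cong L^{p}(\rho;\R^d)$ produces $v$ with $\norm{v}_{L^p(\rho;\R^d)}\le N$ (well-definedness on the span of gradients is automatic, since otherwise $N=\infty$ and there is nothing to prove), the reparametrized curve $f_t=(1-s(t))f_0+s(t)f_1$ with $s(t)=1-(1-t)^p$ solves the continuity equation with velocity $s'(t)v/f_t$, and the balance $f_t\ge c(1-t)^p$ against $s'(t)^p=p^p(1-t)^{p(p-1)}$ gives exactly the uniform action bound $p^pc^{1-p}N^p$ and hence the constant $pc^{-1/p'}$.

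Two caveats. First, your duality step fails at $p=1$: there $p'=\infty$ and $L^\infty(\rho)^{*}$ is not $L^1(\rho)$, so Hahn--Banach only yields a finitely additive object and no vector field $v\in L^1(\rho;\R^d)$; the case $p=1$ needs a separate argument (e.g.\ via Kantorovich--Rubinstein duality, as in the remark following the proposition), although this does not affect the present paper, which only invokes the bound for exponents in $(1,\infty)$. Second, the justification of the Benamou--Brenier inequality for your nonsmooth pair can be streamlined: rather than mollifying $v$ alone (which, as written, breaks the coupling between velocity and density --- one must mollify the momentum $v\rho$ and the measures $\mu_t$ simultaneously and use joint convexity of the action to keep the bound), you can invoke the general superposition/metric-derivative result (e.g.\ Ambrosio--Gigli--Savar\'e, Theorem~8.3.1): any narrowly continuous curve $(\mu_t)_{t\in[0,1]}$ solving the continuity equation with $\int_0^1\norm{v_t}_{L^p(\mu_t;\R^d)}dt<\infty$ satisfies $\w_p(\mu_0,\mu_1)\le\int_0^1\norm{v_t}_{L^p(\mu_t;\R^d)}dt$, which applies directly to your curve and requires no smoothing. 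With these repairs the proposal is a complete proof for $1<p<\infty$, which is the range actually used in this paper.
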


\begin{rmk}
    When $p=1$, a density argument applied to the Kantorovich--Rubinstein duality for $\w_1(\mu_0, \mu_1)$ shows that equality holds in \eqref{eq:dualsobolev}. Note that the supremum in \eqref{eq:dualsobolev} is the operator norm of $(\mu_1-\mu_0)$ in the dual of the homogeneous Sobolev space $\dot{H}^{1,p'}$. See \cite{nietert2021smooth, goldfeld2024limit} for details.
\end{rmk}

We will apply Proposition \ref{prop:dualsobolev} to kernels of smoothed measures: in particular, Lemma~\ref{lem:dualnorm} is obtained by applying Proposition~\ref{prop:dualsobolev} to $\mu_0=(\mu^{\sigma})_{x_{1:t}}$. To proceed, we define a function class $\f^{\sigma, p}$ for $1<p<\infty$, which is essentially the function class appearing in \eqref{eq:dualsobolev} adapted to our setting:
\begin{align}\label{eq:fc_f}
    \f^{\sigma, p}
    :=\{\psi-\n_{\sigma\eta}(\psi) : \psi\in C^{\infty}_c(\R^d), \norm{\nab \psi}_{L^{p'}(\n_{\sigma\eta}; \R^d)}\le 1\} \text{ where } \eta:=\sqrt{1/(2p)'} = \sqrt{1-\frac{1}{2p}}.
\end{align}

The reason for the  choice $\rho=\n_{\sigma\eta}$ will become apparent below.

\begin{lem}\label{lem:dualnorm}
    Let $0<\beta<1/p'$. Suppose that $\mu\in \sp((\R^d)^T)$ satisfies $\e_{q_0/(2\sigma^2)}(\mu)<\infty$, where $q_0=2(p-1)(1/\beta-p')>0$. If $\gamma\in \sp(\R^d)$ is absolutely continuous with respect to the Lebesgue measure, then for all $x\in (\R^d)^T$ and $t\in \{1,2,\ldots, T-1\}$,
    \begin{align}
        \w_p((\mu^{\sigma})_{x_{1:t}}, \gamma)
        \le C
        e^{\frac{\beta}{2\sigma^2}\abs{x_{1:t}-\mean\left(\mu_{1:t}\right)}^2}
        \norm{(\mu^{\sigma})_{x_{1:t}}-\gamma}_{\f^{\sigma, p}},
    \end{align}
    where $C=p ((2p)')^{d/(2p')}
        (\mathcal{E}_{q_0/(2\sigma^2)}(\mu_{t+1}))^{\beta/(1-\beta p')}
        e^{\frac{\beta }{2\sigma^2}\var\left(\mu_{1:t}\right)}$.
\end{lem}

\begin{proof}
Recall $\eta =\sqrt{1/(2p)'}$ as defined in \eqref{eq:fc_f}. The proof follows from applying Proposition \ref{prop:dualsobolev} to the reference measure $\rho:=\n_{\sigma \eta}$, once we have shown the following lower bound for the density:
\begin{align}\label{eq:lower_bound}
    \frac{d(\mu^{\sigma})_{x_{1:t}}}{d\n_{\sigma\eta}}(x_{t+1})
    &=\frac{\varphi_{\sigma}\ast \mu(x_{1:t+1})}{\varphi_{\sigma}\ast \mu(x_{1:t})\varphi_{\sigma\eta}(x_{t+1})}\\
    &\ge \eta^{d} e^{-\frac{\beta p'}{2\sigma^2}\var(\mu_{1:t})}(\e_{q_0/(2\sigma^2)}(\mu_{t+1}))^{-\beta p'/(1-\beta p')}e^{-\frac{\beta p'}{2\sigma^2}\abs{x_{1:t}-\mean(\mu_{1:t})}^2}.
\end{align}
The well-known inequality $\vert x_{t+1}-y_{t+1} \vert^2\le \vert x_{t+1} \vert^2/\eta^2+\vert y_{t+1} \vert^2/(1-\eta^2)$ implies
\begin{align}
    \varphi_{\sigma}(x_{t+1}-y_{t+1})
    =(2\pi\sigma^2)^{-d/2}e^{-\frac{\abs{x_{t+1}-y_{t+1}}^2}{2\sigma^2}}
    \ge \eta^d\varphi_{\sigma \eta}(x_{t+1})e^{-\frac{\abs{y_{t+1}}^2}{2\sigma^2(1-\eta^2)}}.
\end{align}
We thus conclude that
\begin{align*}
    \varphi_{\sigma}\ast \mu(x_{1:t+1})
    &=\int \varphi_{\sigma}(x_{1:t}-y_{1:t})\varphi_{\sigma}(x_{t+1}-y_{t+1})\mu(dy)\\
    &\ge \eta^d\varphi_{\sigma \eta}(x_{t+1}) \int \varphi_{\sigma}(x_{1:t}-y_{1:t})e^{-\frac{\abs{y_{t+1}}^2}{2\sigma^2(1-\eta^2)}}\mu(dy).
\end{align*}
We now apply the reverse H\"older inequality
\begin{align}
    \int fg d\mu \ge \left(\int f^{1/r}d\mu\right)^{r}\left(\int g^{-1/(r-1)}d\mu\right)^{-(r-1)}
\end{align}
with $r= \frac{1}{1-\beta p'}>1$, $f(y_{1:t})=\varphi_{\sigma}(x_{1:t}-y_{1:t})$ and $g(y_{t+1})=e^{-\frac{\abs{y_{t+1}}^2}{2\sigma^2(1-\eta^2)}}$. Noting that
\begin{align}
    \frac{1}{r}=1-\beta p', \quad
    \frac{1}{1-\eta^2}\frac{1}{r-1}=2p \frac{1-\beta p'}{\beta p'}=q_0,
\end{align}
this gives
\begin{align}
&\eta^d\varphi_{\sigma \eta}(x_{t+1}) \int \varphi_{\sigma}(x_{1:t}-y_{1:t})e^{-\frac{\abs{y_{t+1}}^2}{2\sigma^2(1-\eta^2)}}\mu(dy)\\
&\ge \eta^d \varphi_{\sigma \eta}(x_{t+1}) (\varphi^{1-\beta p'}_{\sigma}\ast \mu(x_{1:t}))^{\frac{1}{1-\beta p'}}(\e_{q_0/(2\sigma^2)}(\mu_{t+1}))^{-\beta p'/(1-\beta p')}.\label{eq:lbratio1}
\end{align}
Choosing $a_1=1-\beta p'<1=a_2$ in Lemma \ref{lem:lbofdensity}\ref{lem:lbofdensity_a} (Appendix~\ref{appendix:technical lemmas}) and noting that $(1-a_1/a_2)=\beta p'$, we obtain
\begin{align}
    (\varphi^{1-\beta p'}_{\sigma}\ast \mu(x_{1:t}))^{\frac{1}{1-\beta p'}}
    &=(\varphi^{a_1}_{\sigma}\ast \mu(x_{1:t}))^{1/a_1}\\
    &\ge (\varphi^{a_2}_{\sigma}\ast \mu(x_{1:t}))^{1/a_2}
    e^{-\frac{(1-a_1/a_2)}{2\sigma^2}\abs{x_{1:t}-\mean(\mu_{1:t})}^2}
    e^{-\frac{(1-a_1/a_2)}{2\sigma^2}\var(\mu_{1:t})}\\
    &= \varphi_{\sigma}\ast \mu(x_{1:t})
    e^{-\frac{\beta p'}{2\sigma^2}\abs{x_{1:t}-\mean(\mu_{1:t})}^2}
    e^{-\frac{\beta p'}{2\sigma^2}\var(\mu_{1:t})}.\label{eq:lbratio2}
\end{align}
Combining \eqref{eq:lbratio1} and \eqref{eq:lbratio2} yields the desired result.
\end{proof}

In Lemma~\ref{lem:dualnorm}, we take $\gamma=(\mu^{\sigma})_{y_{1:t}}$ and observe that
\begin{align}
    \w_p((\mu^{\sigma})_{x_{1:t}}, (\mu^{\sigma})_{y_{1:t}})
    \le C
    e^{\frac{\beta}{2\sigma^2}\abs{x_{1:t}-\mean\left(\mu_{1:t}\right)}^2}
    \norm{(\mu^{\sigma})_{x_{1:t}}-(\mu^{\sigma})_{y_{1:t}}}_{\f^{\sigma, p}}.\label{eq:step after dualnorm}
\end{align}
Consequently, Proposition~\ref{prop:lipkernel} follows from Lemma~\ref{lem:dualnormcontrol}, which shows that the supremum norm over $\f^{\sigma, p}$ in \eqref{eq:step after dualnorm} is bounded above by $\abs{x_{1:t}-y_{1:t}}$ up to a constant factor. The proof of Lemma~\ref{lem:dualnormcontrol} uses a Taylor expansion in conjunction with Lemmas~\ref{lem:lbofdensity} and~\ref{lem:gaussiancomputation} in Appendix~\ref{appendix:technical lemmas}.

\begin{lem}\label{lem:dualnormcontrol}
    Let $0<\beta<1$. Suppose that $\mu\in \sp((\R^d)^T)$ satisfies $\e_{q/(2\sigma^2)}(\mu)<\infty$, where $q>2(p-1)/\beta$. Then for all $x, y\in (\R^d)^T$ and $t\in \{1,2,\ldots, T-1\}$,
    \begin{align}
        \norm{(\mu^{\sigma})_{x_{1:t}}-(\mu^{\sigma})_{y_{1:t}}}_{\f^{\sigma, p}}
        \le C e^{\frac{\beta}{2\sigma^2}\left(\abs{x_{1:t}-\mean(\mu_{1:t})}\vee \abs{y_{1:t}-\mean(\mu_{1:t})}\right)^2}\abs{x_{1:t}-y_{1:t}},
    \end{align}
    where
    \begin{align}
        C=\sigma^{-2}
        D_{p, d, \sigma}(2^{1/p}/(2p)')^{d/2}
        e^{\frac{\beta }{2\sigma^2}\var(\mu_{1:t})}
        \left(\norm{h}_{L^{1/\beta}(\mu)}+(M_r(\mu_{1:t}))^{1/r}(\e_{q/(2\sigma^2)}(\mu_{1:t}))^{2(p-1)/q}\right)
    \end{align}
    with $D_{p, d, \sigma}$ defined in Lemma~\ref{lem:gaussiancomputation}, $h(w)=\abs{w_{1:t}}e^{\frac{(p-1)\abs{w_{t+1}}^2}{\sigma^2}}$ and $r=\frac{q}{\beta q-2(p-1)}$.
\end{lem}
\begin{proof}
For $f\in \f^{\sigma, p}$, note that
\begin{align}
    ((\mu^{\sigma})_{x_{1:t}}-(\mu^{\sigma})_{y_{1:t}})(f)
    =\int f(z_{t+1})\left(\frac{\varphi_{\sigma}\ast\mu(x_{1:t}, z_{t+1})}{\varphi_{\sigma}\ast\mu(x_{1:t})}-\frac{\varphi_{\sigma}\ast\mu(y_{1:t}, z_{t+1})}{\varphi_{\sigma}\ast\mu(y_{1:t})}\right) dz_{t+1}.
\end{align}
Let $x_{1:t}(s):=sx_{1:t}+(1-s)y_{1:t}$ for $s\in [0,1]$ and set $ g(s):=\frac{\varphi_{\sigma}\ast\mu(x_{1:t}(s), z_{t+1})}{\varphi_{\sigma}\ast\mu(x_{1:t}(s))}$. From Fubini's theorem and $g(1)-g(0)=\int_0^1 g'(s)ds$, we have
\begin{align}
    ((\mu^{\sigma})_{x_{1:t}}-(\mu^{\sigma})_{y_{1:t}})(f)
    \le \int_0^1\left(\int \abs{f(z_{t+1})}\abs{g'(s)}dz_{t+1}\right)ds. \label{eq:lipfunctional}
\end{align}
We first claim that
\begin{align}
    \abs{g'(s)}
    \le \frac{\abs{x_{1:t}-y_{1:t}}}{\sigma^2} \left(\int \varphi_{\sigma}(z_{t+1}-w_{t+1})\abs{w_{1:t}} \kappa(dw)
    +g(s)\int \abs{w_{1:t}}\kappa(dw_{1:t})\right)\label{eq:bound g'(s)}
\end{align}
where $\kappa\in \sp((\R^d)^T)$ is defined via 
\begin{align}
    \kappa(A):=\frac{1}{\varphi_{\sigma}\ast \mu(x_{1:t}(s))}\int_{A} \varphi_{\sigma}(x_{1:t}(s)-w_{1:t})\mu(dw_{1:T}).
\end{align}
To see this, we compute
\begin{align}
    &\frac{d}{ds}\log(\varphi_{\sigma}\ast\mu(x_{1:t}(s), z_{t+1}))\\
    &=-\frac{1}{\varphi_{\sigma}\ast \mu(x_{1:t}(s), z_{t+1})}\int \varphi_{\sigma}(x_{1:t}(s)-w_{1:t}, z_{t+1}-w_{t+1})\Big(\frac{x_{1:t}(s)-w_{1:t}}{\sigma^2}\Big)\cdot \left(x_{1:t}-y_{1:t}\right)\mu(dw_{1:t+1})\\
    &=-\frac{x_{1:t}(s)\cdot(x_{1:t}-y_{1:t})}{\sigma^2}
    +\frac{1}{\sigma^2 g(s)}\int \varphi_{\sigma}(z_{t+1}-w_{t+1}) w_{1:t}\cdot (x_{1:t}-y_{1:t}) \kappa(dw).
\end{align}
Similarly,
\begin{align}
    \frac{d}{ds}\log(\varphi_{\sigma}\ast\mu(x_{1:t}(s)))
    =-\frac{x_{1:t}(s)\cdot(x_{1:t}-y_{1:t})}{\sigma^2}
    +\frac{1}{\sigma^2}\int w_{1:t}\cdot(x_{1:t}-y_{1:t}) \kappa(dw).
\end{align}
Canceling terms
\begin{align}
    \frac{g'(s)}{g(s)}
    =\frac{d}{ds}\log(g(s))
    =\frac{1}{\sigma^2 g(s)}\int \varphi_{\sigma}(z_{t+1}-w_{t+1}) w_{1:t}\cdot (x_{1:t}-y_{1:t}) \kappa(dw)
    -\frac{1}{\sigma^2}\int w_{1:t}\cdot(x_{1:t}-y_{1:t}) \kappa(dw).
\end{align}
This establishes \eqref{eq:bound g'(s)}.

Next, we apply \eqref{eq:bound g'(s)} along with Fubini's theorem to obtain
\begin{align}
    &\int \abs{f(z_{t+1})} \abs{g'(s)} dz_{t+1}\\
    &\le \frac{\abs{x_{1:t}-y_{1:t}}}{\sigma^2} \left(\int \abs{f}\ast \varphi_{\sigma}(w_{t+1})\abs{w_{1:t}} \kappa(dw)
    +\int \abs{f(z_{t+1})}g(s)dz_{t+1} \int \abs{w_{1:t}}\kappa(dw)\right)\\
    &=: \frac{\abs{x_{1:t}-y_{1:t}}}{\sigma^2}(\text{I}+\text{II}).\label{eq:f g' bound}
\end{align}
It remains to bound $\text{I}+\text{II}$. For this, applying Lemma \ref{lem:gaussiancomputation}\ref{lem:gaussiancomputation_a} and Lemma \ref{lem:lbofdensity}\ref{lem:lbofdensity_b} (Appendix~\ref{appendix:technical lemmas}) to $x_{1:t}(s)$ instead of $x_{1:t}$,
\begin{align}
    \text{I}
    &\le D_{p, d, \sigma}(2^{1/p}/(2p)')^{d/2}
    \int \abs{w_{1:t}}e^{\frac{(p-1)\abs{w_{t+1}}^2}{\sigma^2}} \kappa(dw)\\
    &\le D_{p, d, \sigma}(2^{1/p}/(2p)')^{d/2}
    \norm{h}_{L^{1/\beta}(\mu)}
    e^{\frac{\beta}{2\sigma^2}\var(\mu_{1:t})}
    e^{\frac{\beta}{2\sigma^2}\abs{x_{1:t}(s)-\mean(\mu_{1:t})}^2},\label{eq:I control}
\end{align}
where $h(w):=\abs{w_{1:t}}e^{\frac{(p-1)\abs{w_{t+1}}^2}{\sigma^2}}$. Similarly, we can establish
\begin{align}
    &\text{II}
    \le D_{p, d, \sigma}(2^{1/p}/(2p)')^{d/2}
    \left((M_r(\mu_{1:t}))^{1/r}(\e_{q/(2\sigma^2)}(\mu_{t+1}))^{2(p-1)/q}\right)\\
    &\hspace{5cm}\cdot e^{\frac{\beta}{2\sigma^2}\var(\mu_{1:t})}
    e^{\frac{\beta}{2\sigma^2}\abs{x_{1:t}(s)-\mean(\mu_{1:t})}^2},\label{eq:II control}
\end{align}
where $r:=\frac{q}{\beta q -2(p-1)}>1$. Indeed, Lemma \ref{lem:gaussiancomputation}\ref{lem:gaussiancomputation_b} (Appendix~\ref{appendix:technical lemmas}) shows
\begin{align}
    \text{II}
    &=\frac{1}{\varphi_{\sigma}\ast\mu(x_{1:t}(s))}\int \abs{f(z_{t+1})}\varphi_{\sigma}\ast \mu(x_{1:t}(s), z_{t+1})dz_{t+1}
    \int \abs{w_{1:t}}\kappa(dw)\\
    &\le D_{p, d, \sigma}(2^{1/p}/(2p)')^{d/2}
    \int e^{\frac{(p-1)\abs{w_{t+1}}^2}{\sigma^2}} \kappa(dw)
    \int \abs{w_{1:t}}\kappa(dw).
\end{align}
To see \eqref{eq:II control}, we apply Lemma \ref{lem:lbofdensity}\ref{lem:lbofdensity_b} (Appendix~\ref{appendix:technical lemmas}) to $a=\beta-2(p-1)/q=1/r\in (0,1)$ for $x_{1:t}(s)$, which yields
\begin{align}
    \int \abs{w_{1:t}}\kappa(dw)
    \le (M_{r}(\mu_{1:t}))^{1/r}
    e^{\frac{\beta-2(p-1)/q}{2\sigma^2}\var(\mu_{1:t})}
    e^{\frac{\beta-2(p-1)/q}{2\sigma^2}\abs{x_{1:t}(s)-\mean(\mu_{1:t})}^2}.
\end{align}
By choosing $a=2(p-1)/q\in (0,1)$ in Lemma \ref{lem:lbofdensity}\ref{lem:lbofdensity_b} (Appendix~\ref{appendix:technical lemmas}),
\begin{align}
    \int e^{\frac{(p-1)\abs{w_{t+1}}^2}{\sigma^2}} \kappa(dw)
    \le (\e_{q/(2\sigma^2)}(\mu_{t+1}))^{2(p-1)/q}
    e^{\frac{2(p-1)/q}{2\sigma^2}\var(\mu_{1:t})}
    e^{\frac{2(p-1)/q}{2\sigma^2}\abs{x_{1:t}(s)-\mean(\mu_{1:t})}^2}.
\end{align}
This shows \eqref{eq:II control}. By plugging \eqref{eq:I control} and \eqref{eq:II control} into \eqref{eq:lipfunctional} and using $\vert x_{1:t}(s)-\mean(\mu_{1:t})\vert\le \vert x_{1:t}-\mean(\mu_{1:t})\vert\vee \vert y_{1:t}-\mean(\mu_{1:t})\vert$, this concludes the proof of the lemma.
\end{proof}

\begin{proof}[Proof of Proposition \ref{prop:lipkernel}]
It is easy to check that the moment assumptions of Lemma \ref{lem:dualnorm} and Lemma \ref{lem:dualnormcontrol} are satisfied. We denote the constant appearing in Lemma \ref{lem:dualnorm} by $C_1$,  and the constant appearing in Lemma \ref{lem:dualnormcontrol} by $C_2$. Then from Lemma \ref{lem:dualnorm} and Lemma \ref{lem:dualnormcontrol},
\begin{align}\label{eq:lipkernel_dualnorm}
    \w_p((\mu^{\sigma})_{x_{1:t}}, (\mu^{\sigma})_{y_{1:t}})
    &\le C_1 e^{\frac{\beta}{2\sigma^2}\abs{x_{1:t}-\mean(\mu_{1:t})}^2}\norm{(\mu^{\sigma})_{x_{1:t}}-(\mu^{\sigma})_{y_{1:t}}}_{\f^{\sigma, p}}\\
    &\le C_1C_2 e^{\frac{\beta}{2\sigma^2}\abs{x_{1:t}-\mean(\mu_{1:t})}^2}e^{\frac{\beta}{2\sigma^2}\left(\abs{x_{1:t}-\mean(\mu_{1:t})}\vee \abs{y_{1:t}-\mean(\mu_{1:t})}\right)^2}\abs{x_{1:t}-y_{1:t}}.
\end{align}
Since the moments of $\mu$ appearing in $C_1$ and $C_2$ can be all bounded from above by $\e_{q/(2\sigma^2)}(\mu)$, this proves the general case. When $\mu$ is compactly supported, we can get the desired result by sending $\beta\to 0$. Indeed, by translating the support of $\mu$ if necessary, we may assume that $0\in \supp(\mu)$. Next we note that the constant $C_1$ in Lemma \ref{lem:dualnorm} has a finite limit as $\beta\to 0$. Indeed, as $\beta\to 0$,
\begin{align}
    C_1
    =p((2p)')^{d/(2p')}
    \left(\mathcal{E}_{\frac{p-1}{\sigma^2}(\frac{1}{\beta}-p')}(\mu_{t+1})\right)^{\beta/(1-\beta p')}
    e^{\frac{\beta }{2\sigma^2}\var(\mu_{1:t})}
    \longrightarrow
    p((2p)')^{d/(2p')}\norm{g}^{(p-1)/\sigma^2}_{L^{\infty}(\mu_{t+1})},
\end{align}
where $g(x_{t+1}):=e^{\abs{x_{t+1}}^2}$. In particular, we bound
\begin{align}
    \lim_{\beta\to 0}C_1\le p((2p)')^{d/(2p')}e^{\frac{(p-1)\diam(\supp(\mu_{t+1}))}{\sigma^2}}.
\end{align}
Similarly, the constant $C_2$ has a finite limit as $\beta\to 0$ and this limit can be controlled using $p,d,\sigma, \supp(\mu)$. This proves Proposition \ref{prop:lipkernel}.
\end{proof}

\subsection{Step 2: DPP-based estimates for \texorpdfstring{$\a\w_p$}{TEXT}}\label{sec:dpp}

We first recall the dynamic programming principle (DPP) for the adapted Wasserstein distance \cite[Proposition~5.1]{backhoff2017causal}, which we present next.

\begin{prop}[Proposition $5.1$ in \cite{backhoff2017causal}]\label{prop:dpp}
Given $\mu, \nu\in \sp_p((\R^d)^T)$, let us define $V_t:(\R^d)^t\times (\R^d)^t\to \R$, $t\in \{1,2,\ldots, T\}$ and $V_0\in \R$ via the following recursive formula: $V_T =0 $ and for $t\in \{0,1,\ldots, T-1\}$,
\begin{align}
    V_{t}(x_{1:t}, y_{1:t})
    =\inf_{\gamma_{x_{1:t}, y_{1:t}}\in \cpl(\mu_{x_{1:t}}, \nu_{y_{1:t}})} \int \abs{x_{t+1}-y_{t+1}}^p + V_{t+1}(x_{1:t+1}, y_{1:t+1})\gamma_{x_{1:t}, y_{1:t}}(dx_{t+1}, dy_{t+1}),
\end{align}
where $\mu_{x_{1:0}}:=\mu_1$ and $\nu_{y_{1:0}}:=\nu_1$. Then $V_0=\a\w_p(\mu, \nu)^p$.
\end{prop}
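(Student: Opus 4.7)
The plan is to prove the two inequalities $V_0 \leq \a\w_p(\mu,\nu)^p$ and $V_0 \geq \a\w_p(\mu,\nu)^p$ separately, using as a central tool the disintegration of any bicausal coupling $\pi \in \cplbc(\mu,\nu)$ into one-step kernels. Specifically, the first step is to establish the characterization that $\pi$ is bicausal if and only if it factors as
\begin{align}
\pi(dx, dy) = K_0(dx_1, dy_1) \prod_{t=1}^{T-1} K_t(x_{1:t}, y_{1:t}; dx_{t+1}, dy_{t+1}),
\end{align}
where $K_0 \in \cpl(\mu_1,\nu_1)$ and $K_t(x_{1:t}, y_{1:t}; \cdot) \in \cpl(\mu_{x_{1:t}}, \nu_{y_{1:t}})$ for $\pi$-a.e.\ $(x_{1:t}, y_{1:t})$. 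This would follow from the conditional independence conditions in the definition of bicausality combined with a standard disintegration argument on Polish spaces.

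For the bound $V_0 \leq \a\w_p(\mu,\nu)^p$, I would fix any $\pi \in \cplbc(\mu,\nu)$ with the above decomposition and prove by backward induction on $t = T, T-1, \ldots, 0$ that
\begin{align}
\E_\pi\left[\sum_{s=t+1}^T |X_s - Y_s|^p \,\Big|\, X_{1:t}, Y_{1:t}\right] \geq V_t(X_{1:t}, Y_{1:t}) \quad \pi\text{-a.s.}
\end{align}
The base case $t = T$ is trivial since $V_T = 0$. For the induction step, the tower property combined with bicausality identifies the conditional expectation on the left with $\int (|x_{t+1}-y_{t+1}|^p + V_{t+1}(x_{1:t+1},y_{1:t+1})) K_t(x_{1:t}, y_{1:t}; dx_{t+1}, dy_{t+1})$, which is bounded below by $V_t(x_{1:t}, y_{1:t})$ by the very definition of $V_t$ as an infimum over $\cpl(\mu_{x_{1:t}}, \nu_{y_{1:t}})$. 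Chaining down to $t = 0$ and integrating against $K_0$ gives $\int \sum_{t=1}^T |x_t - y_t|^p \pi(dx,dy) \geq V_0$; optimizing over $\pi$ yields $\a\w_p(\mu,\nu)^p \geq V_0$.

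For the reverse inequality, I would construct, for every $\varepsilon > 0$, a bicausal coupling $\pi_\varepsilon$ of total cost at most $V_0 + \varepsilon$. Proceeding backward from $t = T-1$ down to $t = 0$, I would produce measurable kernels $K_t^\varepsilon(x_{1:t}, y_{1:t}; \cdot) \in \cpl(\mu_{x_{1:t}}, \nu_{y_{1:t}})$ whose integral against $|x_{t+1} - y_{t+1}|^p + V_{t+1}(x_{1:t+1}, y_{1:t+1})$ is within $\varepsilon/T$ of $V_t(x_{1:t}, y_{1:t})$. Concatenating these kernels in the displayed factorization produces a bicausal $\pi_\varepsilon$ whose cost telescopes to $V_0 + \varepsilon$ by repeated use of the tower property, and letting $\varepsilon \to 0$ gives $\a\w_p(\mu,\nu)^p \leq V_0$.

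The main obstacle is the measurable selection step in the reverse inequality. One must first verify that each $V_t$ is jointly Borel measurable in $(x_{1:t}, y_{1:t})$, which follows recursively once the measurability (in an appropriate sense) of the set-valued map $(x_{1:t}, y_{1:t}) \mapsto \cpl(\mu_{x_{1:t}}, \nu_{y_{1:t}})$ is established, and then invoke a Jankov--von Neumann or Kuratowski--Ryll-Nardzewski type selection theorem to extract the $\varepsilon$-optimal kernels $K_t^\varepsilon$. Propagating $L^p$-integrability through the backward recursion so that all conditional expectations are finite is the remaining bookkeeping hurdle, but it is handled by the assumption $\mu, \nu \in \sp_p((\R^d)^T)$.
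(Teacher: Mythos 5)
This proposition is imported verbatim from \cite{backhoff2017causal} and the paper offers no proof of its own, so there is nothing internal to compare against; your argument is precisely the standard proof from that reference: characterize bicausal couplings by the factorization into successive kernels lying in $\cpl(\mu_{x_{1:t}},\nu_{y_{1:t}})$, run a backward induction for the inequality $V_0\le \a\w_p(\mu,\nu)^p$, and concatenate measurably selected $\varepsilon$-optimal kernels for the reverse inequality. The one imprecision is your claim that each $V_t$ is Borel: since the disintegrations $x_{1:t}\mapsto\mu_{x_{1:t}}$ are merely Borel, the infimum defining $V_t$ is in general only upper semianalytic (hence universally measurable), and the Jankov--von Neumann theorem you invoke then produces universally measurable $\varepsilon$-optimal kernels---which is exactly enough to build $\pi_\varepsilon$ and to integrate in the backward induction, so the argument goes through as sketched.
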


In this section, we aim to prove Lemma~\ref{lem:dppbound}, which states that $\a\w^{(\sigma)}_p(\mu, \mhat{\mu}_n)$ can be bounded by a sum of $\w_{2p}$-distances between the kernels of $\mu^{\sigma}$ and $\mhat{\mu}^{\sigma}_n$ under suitable moment assumptions on $\mu$. To this end, we apply Proposition~\ref{prop:lipkernel} and then incorporate it into the DPP given in Proposition~\ref{prop:dpp}. For the statement of Lemma~\ref{lem:dppbound}, we recall the projection map $P^1((x_1, \ldots, x_T))=x_1$ and $\mu_1=P^{1}_{\#}\mu$ and $(\mhat{\mu}_n)_1=P^1_{\#}\mhat{\mu}_n$.

\begin{lem}[DPP]\label{lem:dppbound}
Let $\beta$ satisfy $\eqref{assume:parameters}$. Suppose that $\mu\in \sp((\R^d)^T)$ satisfies $\mean(\mu)=0$ and $\e_{q/(2\sigma^2)}(\mu)<\infty$ where $q>q^{*}(p, T,\beta)$. Then there exists a constant $C>0$ that depends only on $d$,$T$,$p$,$\sigma$,$q$,$\beta$, $\e_{q/(2\sigma^2)}(\mu)$ such that
\begin{equation}\label{eq:lem dppbound}
\begin{aligned}
    \a\w^{(\sigma)}_p(\mu, \mhat{\mu}_n)
    \le &C \Big(\e_{\frac{2p\beta(T-1)}{\sigma^2(1-4p\beta(T-1))}}(\mhat{\mu}_n)\Big)^{1/(2p)}\\
    &\cdot\left(\w^{(\sigma)}_{2p}(\mu_1, (\mhat{\mu}_n)_1)
    +\sum_{t=1}^{T-1}\left(
    \int \w_{2p}((\mu^{\sigma})_{y_{1:t}}, (\mhat{\mu}^{\sigma}_n)_{y_{1:t}})^{2p} \mhat{\mu}^{\sigma}_n(dy)
    \right)^{1/(2p)}\right) \,\,\text{a.s.}
\end{aligned}
\end{equation}
\end{lem}

Before presenting the proof of Lemma~\ref{lem:dppbound}, we first illustrate how the DPP together with Proposition~\ref{prop:lipkernel} is used to establish an upper bound for $\a\w^{(\sigma)}_p(\mu, \mhat{\mu}_n)$. When $\mu$ is compactly supported, this argument was already outlined in Step 2 in Section~\ref{sec:outline proof}. Here, we extend the argument to the subgaussian case. This naturally motivates the parameter condition \eqref{assume:parameters} and the moment condition $q^{*}(p, T, \beta)$ appearing in Lemma~\ref{lem:dppbound}. The precise definitions of \eqref{assume:parameters} and $q^{*}(p, T, \beta)$ will be given later in this section.

We now consider the case where $T=2$ and $\mu$ is subgaussian. From Proposition~\ref{prop:dpp},
\begin{align}
    \a\w^{(\sigma)}_p(\mu, \mhat{\mu}_n)^p
    =\inf_{\gamma\in \cpl((\mu^{\sigma})_1, (\mhat{\mu}^{\sigma}_n)_1)}\int \abs{x_1-y_1}^p +\w_p((\mu^{\sigma})_{x_1}, (\mhat{\mu}^{\sigma}_n)_{y_1})^p \gamma(dx_1, dy_1).\label{eq:ex T=2 dpp}
\end{align}
By the triangle inequality,
\begin{align}
    \w_p((\mu^{\sigma})_{x_1}, (\mhat{\mu}^{\sigma}_n)_{y_1})^p
    \le C \w_p((\mu^{\sigma})_{x_1}, (\mu^{\sigma})_{y_1})^p
    +C\w_p((\mu^{\sigma})_{y_1}, (\mhat{\mu}^{\sigma}_n)_{y_1})^p.
\end{align}
Recall from Proposition~\ref{prop:lipkernel} that the kernels of $\mu^{\sigma}$ are locally Lipschitz with additional exponential functions appearing in \eqref{eq: exp lipkernel}. This results in
\begin{align}
    \w_p((\mu^{\sigma})_{x_1}, (\mhat{\mu}^{\sigma}_n)_{y_1})^p
    \le   Ce^{\frac{p\beta}{\sigma^2}\left(\abs{x_{1}-\mean(\mu_{1})}\vee \abs{y_{1}-\mean(\mu_1 )}\right)^2}
    \abs{x_1-y_1}^p
    +C\w_p((\mu^{\sigma})_{y_1}, (\mhat{\mu}^{\sigma}_n)_{y_1})^p.
\end{align}
Plugging this back into \eqref{eq:ex T=2 dpp},
\begin{align}
\begin{split}
    \a\w^{(\sigma)}_p(\mu, \mhat{\mu}_n)^p
    &\le C\inf_{\gamma\in \cpl((\mu^{\sigma})_1, (\mhat{\mu}^{\sigma}_n)_1)}
    \int e^{\frac{p\beta}{\sigma^2}\left(\abs{x_{1}-\mean(\mu_{1})}\vee \abs{y_{1}-\mean(\mu_1 )}\right)^2}\abs{x_1-y_1}^p \gamma(dx_1, dy_1)\\
    &+C\int \w_p((\mu^{\sigma})_{y_1}, (\mhat{\mu}^{\sigma}_n)_{y_1})^p\mhat{\mu}^{\sigma}_n(dy).\label{eq:W_2p vs W_r}
\end{split}
\end{align}
Compared to the compactly supported case where $\beta$ can be chosen equal to zero, this bound is looser as $\mu$ is only assumed to be subgaussian. To control the infimum above, we choose $\gamma$ as an optimal coupling for $\w_{2p}((\mu^{\sigma})_1, (\mhat{\mu}^{\sigma}_n)_1)$ (\textit{not} for $\w_{p}$ as in Section~\ref{sec:outline proof}) and use the Cauchy--Schwarz inequality, which yields
\begin{align}
    \a\w^{(\sigma)}_p(\mu, \mhat{\mu}_n)^p
    &\le
    C\left(\int e^{\frac{2p\beta}{\sigma^2}\left(\abs{x_{1}-\mean(\mu_{1})}\vee \abs{y_{1}-\mean(\mu_1 )}\right)^2}\gamma(dx_1, dy_1)\right)^{1/2}
    \w_{2p}((\mu^{\sigma})_1, (\mhat{\mu}^{\sigma}_n)_1)^{p}\\
    &+C\int \w_p((\mu^{\sigma})_{y_1}, (\mhat{\mu}^{\sigma}_n)_{y_1})^p\mhat{\mu}^{\sigma}_n(dy).
\end{align}
Omitting details, this implies that under suitable moment assumptions on $\mu$,
\begin{equation}\label{eq:sketch dppbound subgaussian}
\begin{aligned}
    &\a\w^{(\sigma)}_p(\mu, \mhat{\mu}_n)^p\\
    &\le
    C\left(\e_{\frac{2p\beta}{\sigma^2(1-4p\beta)}}(\mhat{\mu}_n)\right)^{1/2}\left(\w^{(\sigma)}_{2p}(\mu_1, (\mhat{\mu}_n)_1)^p
    +C\left(\int \w_{2p}((\mu^{\sigma})_{y_{1}}, (\mhat{\mu}_n)_{y_{1}})^{2p}\mhat{\mu}^{\sigma}_n(dy)\right)^{1/2}\right).
\end{aligned}
\end{equation}
The main difference between the above estimate~\eqref{eq:sketch dppbound subgaussian} and the estimate \eqref{eq:sketch dppbound} in Section~\ref{sec:outline proof} is that the upper bound in \eqref{eq:sketch dppbound subgaussian} is \textit{looser} in the sense that it is stated in terms of $\w_{2p}$-distances rather than $\w_p$-distances.

In the proof of Lemma~\ref{lem:dppbound}, we will need to inductively apply Proposition~\ref{prop:lipkernel} to $\w_{2p}$ instead of $\w_p$. For simplicity of notation, we will henceforth fix a parameter $\beta$ satisfying \eqref{assume:parameters} below and define $q^{*}(p, T, \beta)$ as follows:
\begin{equation}\tag{P}\label{assume:parameters}
\begin{aligned}
    \quad 0<\beta<\min\left\{\frac{1}{4p(T-1)}, \frac{1}{8p}\right\},
\end{aligned}
\end{equation}
\begin{equation}\tag{Q}\label{assume:q star}
\begin{aligned}
    q^{*}(p, T, \beta)
    :=\max\Bigg\{\frac{2(2p-1)}{\beta},
    \frac{6p(T-1)\beta}{1-4p(T-1)\beta},
    \frac{12p\beta}{1-8p\beta},
    4(2p-1)+\frac{2}{\left(\sqrt{(4p)'}-1\right)^2}\Bigg\} >0.
\end{aligned}
\end{equation}

This specific choice of $\beta$ and $q^{*}(p, T, \beta)$ will become clear later in the proof. However let us record the following important implication of this choice: if $\beta$ satisfies \eqref{assume:parameters} and $\mu\in \sp((\R^d)^T)$ satisfies $\e_{q/(2\sigma^2)}(\mu)<\infty$ for some $q>q^{*}(p, T, \beta)$, then the parameters $(\beta, q)$ satisfy the assumptions of Proposition \ref{prop:lipkernel} and Lemma \ref{lem:dualnorm} for $\w_{2p}$ as follows:
\begin{align}
    0<\beta<\frac{1}{8p}<\frac{1}{2}< \frac{1}{(2p)'}, \quad 
    q>q^{*}(p, T, \beta)
    \ge \frac{2(2p-1)}{\beta}
    >2(2p-1)\left(\frac{1}{\beta}-(2p)'\right)
    >0.
\end{align}

For ease of reference, we restate Proposition~\ref{prop:lipkernel} and Lemma~\ref{lem:dualnorm} with $p$ replaced by $2p$.

\begingroup
\def\thethm{\ref{prop:lipkernel}}
    \begin{prop}[with $p$ replaced by $2p$] Let $\beta$ satisfy \eqref{assume:parameters}. Suppose that $\mu\in \sp((\R^d)^T)$ satisfies $\e_{q/(2\sigma^2)}(\mu)<\infty$ where $q>q^{*}(p, T,\beta)$. Then there exists a constant $C>0$ that depends only on $d, p, \sigma, q, \beta, \e_{q/(2\sigma^2)}(\mu)$ such that for all $x,y\in (\R^d)^T$ and $t\in \{1,2,\ldots, T-1\}$,
    \begin{align}
    \w_{2p}((\mu^{\sigma})_{x_{1:t}}, (\mu^{\sigma})_{y_{1:t}})
    \le C e^{\frac{\beta}{\sigma^2}(|x_{1:t}-\mean(\mu_{1:t})|\vee|y_{1:t}-\mean(\mu_{1:t})|)^2} \abs{x_{1:t}-y_{1:t}}.\label{eq:w_2p lipkernel}
\end{align}
\end{prop}
\addtocounter{thm}{-1}
\endgroup

\begingroup
\def\thethm{\ref{lem:dualnorm}}
\begin{lem}[with $p$ replaced by $2p$] Let $\beta$ satisfy \eqref{assume:parameters}. Suppose that $\mu\in \sp((\R^d)^T)$ satisfies $\e_{q/(2\sigma^2)}(\mu)<\infty$ where $q>q^{*}(p, T,\beta)$. If $\gamma\in \sp(\R^d)$ is absolutely continuous with respect to the Lebesgue measure, then there exists a constant $C>0$ that depends only on $d, p, \beta, \sigma, \e_{q/(2\sigma^2)}(\mu)$ such that for all $x\in (\R^d)^T$ and $t\in \{1,2,\ldots, T-1\}$,
\begin{align}
    \w_{2p}((\mu^{\sigma})_{x_{1:t}}, \gamma)
    \le C e^{\frac{\beta}{2\sigma^2}\abs{x_{1:t}-\mean(\mu_{1:t})}^2}\norm{(\mu^{\sigma})_{x_{1:t}}-\gamma}_{\f^{\sigma, 2p}}.\label{eq:w_2p dualnorm}
\end{align}
\end{lem}
\addtocounter{thm}{-1}
\endgroup

\begin{rmk}[Choice of 2p in Lemma \ref{lem:dppbound}]
It is worthwhile to emphasize that the $\w_{2p}$-distances in \eqref{eq:lem dppbound} can be replaced by $\w_r$-distances for any $r>p$ under suitable choices of parameters $\beta>0$ and $q^{*}(p, T, \beta)>0$. Our choice of $\beta$ and $q^{*}(p, T, \beta)$ in \eqref{assume:parameters} and \eqref{assume:q star} is tailored specifically to establish the upper bound \eqref{eq:lem dppbound} in terms of $\w_{2p}$-distances. Indeed, let us revisit \eqref{eq:W_2p vs W_r} and choose $\gamma$ as an optimal coupling for $\w_{r}((\mu^{\sigma})_1, (\mhat{\mu}^{\sigma}_n)_1)$ instead of $\w_{2p}((\mu^{\sigma})_1, (\mhat{\mu}^{\sigma}_n)_1)$. Following a similar argument we obtain
\begin{align}
    \a\w^{(\sigma)}_p(\mu, \mhat{\mu}_n)
    \le C \left(\e_{\vartheta}(\mhat{\mu}_n)\right)^{1/r}
    \left(\w^{(\sigma)}_{r}(\mu_1, (\mhat{\mu}_n)_1)
    +\sum_{t=1}^{T-1}\left(
    \int \w_{r}((\mu^{\sigma})_{y_{1:t}}, (\mhat{\mu}^{\sigma}_n)_{y_{1:t}})^{r} \mhat{\mu}^{\sigma}_n(dy)
    \right)^{1/r}\right)
\end{align}
for some $\vartheta>0$ that we will not specify here.
\end{rmk}

\begin{proof}[Proof of Lemma \ref{lem:dppbound}]
Set $m_{1:t}:=(\abs{x_{1:t}}\vee \abs{y_{1:t}})^2$ for $t\in \{1,\ldots, T-1\}$ and $m_{1:0}:=0$. Let $\gamma^{\star}_{x_{1:t}, y_{1:t}}$ be an optimal coupling of $\w_{2p}((\mu^{\sigma})_{x_{1:t}}, (\mhat{\mu}^{\sigma}_n)_{y_{1:t}})$ for $t\in \{0,1,\ldots, T-1\}$ with the convention that $(\mu^{\sigma})_{x_{1:0}}=(\mu^{\sigma})_1$ and $(\mhat{\mu}^{\sigma}_n)_{y_{1:0}}:=(\mhat{\mu}^{\sigma}_n)_1$. We define $E(x_{1:T}, y_{1:T}):=1$ and
\begin{align}
    E(x_{1:t}, y_{1:t})
    :=e^{\frac{p\beta}{\sigma^2}m_{1:t}}
    \norm{E(x_{1:t+1}, y_{1:t+1})}_{L^2(\gamma^{\star}_{x_{1:t},y_{1:t}})}, t\in \{0,1,\ldots, T-1\}.\label{eq:ind E}
\end{align}
Moreover, for each $t\in \{0,1,\ldots, T-1\}$, we define $W_{t}(y_{1:t}):=\w_{2p}((\mu^{\sigma})_{y_{1:t}}, (\mhat{\mu}^{\sigma}_n)_{y_{1:t}})^p$ and for all $s\in \{t+1, \ldots, T-1\}$, we set
\begin{align}
    W_s(y_{1:t})
    :=\norm{W_s(y_{1:t+1})}_{L^2((\mhat{\mu}^{\sigma}_n)_{y_{1:t}} )}.\label{eq:ind W}
\end{align}
For the value functions $V_t$ defined in Proposition \ref{prop:dpp}, we now show by the backward induction that for all $t\in \{0, 1, \ldots, T-1\}$,
\begin{align}
    V_{t}(x_{1:t}, y_{1:t})
    \le C E(x_{1:t}, y_{1:t})
    \left(\abs{x_{1:t}-y_{1:t}}^p
    +\sum_{s=t}^{T-1}
    W_{s}(y_{1:t})\right)\label{eq:indhyp}
\end{align}
for some constant $C>0$ with the convention $\abs{x_{1:0}-y_{1:0}}:=0$. Since $\mean(\mu)=0$, Proposition~\ref{prop:lipkernel} (in particular, the estimate \eqref{eq:w_2p lipkernel}) shows that there exists a finite constant $C>0$ that depends on $d, p, \sigma, q, \beta, \e_{q/(2\sigma^2)}(\mu)$ such that
\begin{align}
    \w_{2p}((\mu^{\sigma})_{x_{1:t}}, (\mu^{\sigma})_{y_{1:t}})^p
    \le C e^{\frac{p\beta}{\sigma^2}m_{1:t}}\abs{x_{1:t}-y_{1:t}}^p.\label{eq:dpplip}
\end{align}
 By the triangle inequality and \eqref{eq:dpplip}, we establish the initial case
\begin{align}
    V_{T-1}(x_{1:T-1}, y_{1:T-1})
    &\le \w_{2p}((\mu^{\sigma})_{x_{1:T-1}}, (\mhat{\mu}^{\sigma}_n)_{y_{1:T-1}})^p\\
    &\le  C e^{\frac{p\beta}{\sigma^2}m_{1:T-1}}\abs{x_{1:T-1}-y_{1:T-1}}^p
    +C\w_{2p}((\mu^{\sigma})_{y_{1:T-1}}, (\mhat{\mu}^{\sigma}_n)_{y_{1:T-1}})^p.
\end{align}
Now, let us assume that the induction hypothesis \eqref{eq:indhyp} holds for $V_{t+1}$. Using the shorthand notation $\norm{\cdot}_{L^2}:=\norm{\cdot}_{L^2(\gamma^{\star}_{x_{1:t}, y_{1:t}})}$,
the Cauchy--Schwarz inequality shows that
\begin{align}
    &V_{t}(x_{1:t}, y_{1:t})\\
    &\le \int \abs{x_{t+1}-y_{t+1}}^p +V_{t+1}(x_{1:t+1}, y_{1:t+1})\gamma^{\star}_{x_{1:t}, y_{1:t}}(dx_{t+1}, dy_{t+1})\\
    &\underbrace{\le}_{\eqref{eq:indhyp}} C \int E(x_{1:t+1}, y_{1:t+1})
    \left(\abs{x_{1:t}-y_{1:t}}^p
    +\abs{x_{t+1}-y_{t+1}}^p
    +\sum_{s=t+1}^{T-1}W_s(y_{1:t+1})\right)
    \gamma^{\star}_{x_{1:t}, y_{1:t}}(dx_{t+1}, dy_{t+1})\\
    &\le C\norm{E(x_{1:t+1}, y_{1:t+1})}_{L^2}
    \left(\abs{x_{1:t}-y_{1:t}}^p
    +\w_{2p}((\mu^{\sigma})_{x_{1:t}}, (\mhat{\mu}^{\sigma}_n)_{y_{1:t}})^p
    +\sum_{s=t+1}^{T-1}\norm{W_s(y_{1:t+1})}_{L^2}\right).
\end{align}
From \eqref{eq:dpplip} and the triangle inequality, we obtain
\begin{align}
    \w_{2p}((\mu^{\sigma})_{x_{1:t}}, (\mhat{\mu}^{\sigma}_n)_{y_{1:t}})^p
    \le C e^{\frac{p\beta}{\sigma^2}m_{1:t}}\abs{x_{1:t}-y_{1:t}}^p
    +C W_{t}(y_{1:t}).
\end{align}
Together with \eqref{eq:ind E} and \eqref{eq:ind W}, this shows \eqref{eq:indhyp}. Thus,
\begin{align}\label{eq:ind final}
    \a\w^{(\sigma)}_p(\mu, \mhat{\mu}_n)^p
    \le CE(x_{1:0}, y_{1:0})\sum_{t=0}^{T-1}W_t(y_{1:0}).
\end{align}
Note that for $\gamma^{\star}(dx_{1:T}, dy_{1:T}):=\prod_{t=0}^{T-1}\gamma^{\star}_{x_{1:t}, y_{1:t}}(dx_{t+1}, dy_{t+1})\in \cpl(\mu^{\sigma}, \mhat{\mu}^{\sigma}_n)$,
\begin{align}
    E(x_{1:0}, y_{1:0})
    &=\left(\int \prod_{t=1}^{T-1}e^{\frac{2p\beta}{\sigma^2}m_{1:t}}\gamma^{\star}(dx, dy)\right)^{1/2}\\
    &\le \left(\int e^{\frac{2p\beta}{\sigma^2}(T-1)m_{1:T-1}}\gamma^{\star}(dx, dy)\right)^{1/2}
    \le\left(\e_{\frac{2p\beta(T-1)}{\sigma^2}}(\mu^{\sigma})+\e_{\frac{2p\beta(T-1)}{\sigma^2}}(\mhat{\mu}_n^{\sigma})\right)^{1/2}.
\end{align}
For the last inequality, we use $e^{\frac{2p\beta}{\sigma^2}(T-1)m_{1:T-1}}\le e^{\frac{2p\beta}{\sigma^2}(T-1)\abs{x}^2}+e^{\frac{2p\beta}{\sigma^2}(T-1)\abs{y}^2}$. Set $\theta:=\frac{2p\beta(T-1)}{\sigma^2}$. Note from \eqref{assume:parameters} and \eqref{assume:q star} that 
\begin{align}
    \theta= \frac{2p(T-1)}{\sigma^2} \beta
    < \frac{2p(T-1)}{\sigma^2} \frac{1}{4p(T-1)}
    =\frac{1}{2\sigma^2}
\end{align}
and $\e_{\frac{\theta}{1-2\sigma^2 \theta}}(\mu)<\infty$, as
\begin{align}
    \frac{q}{2\sigma^2}
    >\frac{q^{*}(p, T, \beta)}{2\sigma^2}
    \ge \frac{1}{2\sigma^2}\frac{6p(T-1)\beta}{1-4p(T-1)\beta}
    > \frac{2p\beta (T-1)}{\sigma^2 (1- 4p (T-1)\beta)}
    =\frac{\theta}{1-2\sigma^2 \theta}.
\end{align}
Therefore Lemma \ref{lem:exp mu sig} (Appendix~\ref{appendix:technical lemmas}) shows that 
\begin{align}
    E(x_{1:0}, y_{1:0})
    \le C\Big(\e_{\frac{2p\beta(T-1)}{\sigma^2(1-4p\beta(T-1))}}(\mhat{\mu}_n)\Big)^{1/2}.
\end{align}
It is evident from the definition of $W_t$ that
\begin{align}
    W_0(y_{1:0})=\mathcal{W}^{(\sigma)}_{2p}(\mu_1, (\mhat{\mu}_n)_1)^p,
    \quad
    W_t(y_{1:0})= \left(\int \mathcal{W}_{2p}((\mu^{\sigma})_{y_{1:t}}, (\mhat{\mu}^{\sigma}_n)_{y_{1:t}})^{2p} \mhat{\mu}^{\sigma}_n(dy)\right)^{1/2}.
\end{align}
Taking the $1/p$-th power in \eqref{eq:ind final} completes the proof of the lemma.
\end{proof}

\subsection{Step 3: Completion of the proof via empirical process theory}\label{sec:emp}

In this section, we complete the proof of Theorem \ref{thm:fast_rate}. The main ingredient is the following lemma, which is a classical result from empirical process theory.

\begin{lem}[Lemma $8$ in \cite{nietert2021smooth}]\label{lem:emp}
    Let $\h\subseteq C^{m}(\R^N)$ be a function class, and let $m$ be a positive integer with $m>N/2$. Let $\{\b_j\}_{j=1}^{\infty}$ be a cover of $\R^N$ consisting of nonempty bounded convex sets with $\sup_j \diam(\b_j)<\infty$. Set $M_j=\sup_{f\in \h}\norm{f}_{C^{m}(\b_j)}$ with $\norm{f}_{C^{m}(\b_j)}=\max_{\abs{\beta}\le m}\sup_{z\in \text{int}(\b_j)}\abs{\p^{\beta}f(z)}$. Then
    \begin{align}
        \E[\sqrt{n}\norm{\mu-\mhat{\mu}_n}_{\h}]
        \le C \sum_{j=1}^{\infty}M_j\mu(\b_j)^{1/2}\label{eq:lem emp}
    \end{align}
    for some constant $C>0$ that depends on $N, m, \sup_j \diam(\b_j)$.
\end{lem}

The rest of the proof proceeds as follows. In Lemma~\ref{lem:dpp norm bound}, we derive an upper bound for $\E[\a\w^{(\sigma)}_p(\mu, \widehat{\mu}_n)]$, which is expressed in terms of expected supremum norms over suitable function classes $\h$. The resulting bound corresponds to the left-hand side of \eqref{eq:lem emp}. In the proof of Lemma~\ref{lem:h bound}, we apply Lemma~\ref{lem:emp} to control this bound by a sum involving the measures of suitable covers. Finally, we complete the proof of Theorem~\ref{thm:fast_rate} at the end of this section.

To that end, let us define partitions and associated function classes. For $t\in \{1,2,\ldots, T-1\}$, we consider a partition $\{E^{(t)}_{k}\}_{k=0}^{\infty}$ of $(\R^d)^t$ which is defined as follows:
\begin{align}
    E^{(t)}_0:=\{y_{1:t}\in (\R^d)^t : \abs{y_{1:t}}<1\},
    \quad E^{(t)}_k:=\{y_{1:t}\in (\R^d)^t : k\le \abs{y_{1:t}}<k+1\} \text{ for } k\ge 1.
\end{align}
Recall $\f^{\sigma, 2p}$ in \eqref{eq:fc_f}. For each $t\in \{1,2,\ldots, T-1\}$ and non-negative integer $k$, we define $\h^{t, \sigma, 2p}_k$ as the set of all functions $h:(\R^d)^{t+1}\to \R$ such that
\begin{align}
    &h(z_{1:t+1})=\varphi_{\sigma}(y_{1:t}-z_{1:t}) \text{ for some } y_{1:t}\in E^{(t)}_k,\\
    &\text{ or }h(z_{1:t+1})=\varphi_{\sigma}(y_{1:t}-z_{1:t})f\ast \varphi_{\sigma}(z_{t+1})  \text{ for some } f\in \f^{\sigma, 2p}, y_{1:t}\in E^{(t)}_k.
\end{align}

We now present Lemma~\ref{lem:dpp norm bound} below.

\begin{lem}\label{lem:dpp norm bound}
Let $\beta$ satisfy $\eqref{assume:parameters}$. Suppose that $\mu\in \sp((\R^d)^T)$ satisfies $\mean(\mu)=0$ and $\mathcal{E}_{q/(2\sigma^2)}(\mu)<\infty$, where $q>q^{*}(p, T, \beta)$. Then there exists a constant $C>0$ that depends only on $d, T, p, \sigma, q, \beta, \e_{q/(2\sigma^2)}(\mu)$, such that
\begin{align}
    \E[\a\w^{(\sigma)}_p(\mu, \mhat{\mu}_n)]
    \le C\left(\frac{1}{\sqrt{n}} + \sum_{t=1}^{T-1} \sum_{k=0}^{\infty}k^{\frac{dt-1}{4p}}
    e^{\frac{(k+2)^2}{2\sigma^2(4p)'}} \E\left[\vert\vert \mu_{1:t+1}-(\mhat{\mu}_n)_{1:t+1}\vert\vert_{\h^{t, \sigma, 2p}_{k}}\right]\right).
\end{align}
\end{lem}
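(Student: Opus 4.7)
The plan is to chain four ingredients: the DPP bound of Lemma~\ref{lem:dppbound}, which reduces $\a\w_p^{(\sigma)}$ to a sum of conditional $\w_{2p}$-distances multiplied by an exponential moment prefactor; the dual Sobolev representation of Lemma~\ref{lem:dualnorm}, applied with $p$ replaced by $2p$, which converts each kernel $\w_{2p}$-distance into the $\f^{\sigma,2p}$-norm of the kernel difference; a shell decomposition $(\R^d)^t = \bigsqcup_k E_k^{(t)}$ of the conditioning variable; and an explicit algebraic rewriting of the kernel difference in terms of test functions in $\h_k^{t,\sigma,2p}$ acting on $\mu-\mhat{\mu}_n$.

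Starting from Lemma~\ref{lem:dppbound} I write
\[
\a\w_p^{(\sigma)}(\mu, \mhat{\mu}_n) \le C\, A\, \Big( W + \sum_{t=1}^{T-1} S_t\Big),
\]
where $A = (\mathcal{E}_\theta(\mhat{\mu}_n))^{1/(2p)}$ with $\theta = 2p\beta(T-1)/(\sigma^2(1-4p\beta(T-1)))$, $W = \w_{2p}^{(\sigma)}(\mu_1,(\mhat{\mu}_n)_1)$, and $S_t = (\int \w_{2p}((\mu^\sigma)_{y_{1:t}}, (\mhat{\mu}_n^\sigma)_{y_{1:t}})^{2p} \mhat{\mu}_n^\sigma(dy))^{1/(2p)}$. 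Taking expectation, Cauchy--Schwarz separates $A$ from the other factors, and Jensen's inequality together with $\E[\mathcal{E}_\theta(\mhat{\mu}_n)] = \mathcal{E}_\theta(\mu)$ bounds $\E[A^2]^{1/2}$ by a finite constant; the definition of $q^{*}(p,T,\beta)$ in \eqref{assume:q star} is tailored so that the relevant exponential moment exists. For $W$ I invoke the known fast rate for $\w_{2p}^{(\sigma)}$ on subgaussian data (item (3) in Section~\ref{sec:intro}, from \cite{nietert2021smooth}), which yields the $1/\sqrt{n}$ contribution.

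For each $S_t$ I decompose the outer integral into the shells $E_k^{(t)}$, and sub-additivity of $x \mapsto x^{1/(2p)}$ gives
\[
S_t \le \sum_{k=0}^{\infty} \big((\mhat{\mu}_n^\sigma)_{1:t}(E_k^{(t)})\big)^{1/(2p)} \sup_{y_{1:t}\in E_k^{(t)}} \w_{2p}\big((\mu^\sigma)_{y_{1:t}},(\mhat{\mu}_n^\sigma)_{y_{1:t}}\big).
\]
On each shell, Lemma~\ref{lem:dualnorm} with $p \to 2p$ yields $\w_{2p}(\cdots) \le C e^{\beta(k+1)^2/(2\sigma^2)} \norm{(\mu^\sigma)_{y_{1:t}}-(\mhat{\mu}_n^\sigma)_{y_{1:t}}}_{\f^{\sigma,2p}}$. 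Writing each conditional as a ratio $\mu(h_1)/\mu(h_2)$ with $h_1(w) = \varphi_\sigma(y_{1:t}-w_{1:t})\, f\ast \varphi_\sigma(w_{t+1})$ and $h_2(w) = \varphi_\sigma(y_{1:t}-w_{1:t})$ (both in $\h_k^{t,\sigma,2p}$) and applying the identity $a/b-a'/b' = (a-a')/b - a'(b-b')/(bb')$, I express the dual norm as $\norm{\mu-\mhat{\mu}_n}_{\h_k^{t,\sigma,2p}}$ modulated by factors $1/\mu(h_2)$ and $|\mhat{\mu}_n(h_1)|/(\mu(h_2)\mhat{\mu}_n(h_2))$. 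The Jensen lower bound $\mu(h_2) = \varphi_\sigma\ast\mu(y_{1:t}) \ge c e^{-(k+1)^2/(2\sigma^2)}$, valid since $\mean(\mu)=0$, combines with the exponent from Lemma~\ref{lem:dualnorm}. Passing to the expectation, a further Cauchy--Schwarz step distributes the random mass $(\mhat{\mu}_n^\sigma)_{1:t}(E_k^{(t)})^{1/(2p)}$ --- whose expectation is controlled by $\mu^\sigma(E_k^{(t)})^{1/(2p)}$ via Jensen and by the Lebesgue volume bound $\mu^\sigma(E_k^{(t)}) \lesssim k^{dt-1}$ --- against the dual-norm factor. Balancing the Gaussian exponents using the specific range of $\beta$ in \eqref{assume:parameters} yields the advertised polynomial $k^{(dt-1)/(4p)}$ and Gaussian $e^{(k+2)^2/(2\sigma^2(4p)')}$ prefactors.

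The main technical obstacle is the random denominator $\mhat{\mu}_n(h_2) = \varphi_\sigma\ast\mhat{\mu}_n(y_{1:t})$, which can in principle be much smaller than $\mu(h_2)$ on a low-probability event, as can the quotient $|\mhat{\mu}_n(h_1)|/\mhat{\mu}_n(h_2) = |(\mhat{\mu}_n^\sigma)_{y_{1:t}}(f)|$ appearing in the second term of the algebraic identity. I handle this via a good/bad event split: on the event $\{\mhat{\mu}_n(h_2) \ge \mu(h_2)/2\}$ the estimates above apply directly, and Lemma~\ref{lem:gaussiancomputation}\ref{lem:gaussiancomputation_a} (for $p \to 2p$) provides a bound on $|\mhat{\mu}_n(h_1)|$ after replacing $\mhat{\mu}_n$ by its expectation up to a tail term. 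On the bad event I use a crude $\w_{2p}$-bound and invoke the Fuk--Nagaev concentration of Proposition~\ref{prop:sum of iid} to show that the resulting contribution is $O(1/\sqrt{n})$. Making this tail estimate summable across the infinite family of shells $E_k^{(t)}$ uniformly in $t$ is the most delicate bookkeeping in the proof and is what forces the specific moment threshold $q > q^{*}(p,T,\beta)$.
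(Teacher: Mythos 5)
Your overall architecture (Lemma~\ref{lem:dppbound} $\to$ dual Sobolev bound with $p$ replaced by $2p$ $\to$ shell decomposition $\to$ the ratio identity expressing kernel differences through test functions in $\h^{t,\sigma,2p}_k$ acting on $\mu-\mhat{\mu}_n$) is the same as the paper's, but two steps in your execution have genuine gaps. The main one is your treatment of the random denominator. Your good event $\{\mhat{\mu}_n(h_2)\ge \mu(h_2)/2\}$, i.e.\ $\{\varphi_\sigma\ast\mhat{\mu}_n(y_{1:t})\ge \tfrac12\varphi_\sigma\ast\mu(y_{1:t})\}$, is not a high-probability event on distant shells: for $|y_{1:t}|\approx k$ with $k$ large compared to the typical range of the $n$ samples, $\varphi_\sigma\ast\mu(y_{1:t})$ is dominated by the rare mass of $\mu$ near $y_{1:t}$, which the sample typically misses, so $\varphi_\sigma\ast\mhat{\mu}_n(y_{1:t})$ is with probability close to one \emph{exponentially smaller} (in $k^2$) than $\varphi_\sigma\ast\mu(y_{1:t})$ (try $\mu$ Gaussian: the population density decays like $e^{-|y|^2/(2(1+\sigma^2))}$, the empirical one like $e^{-(|y|-O(\sqrt{\log n}))^2/(2\sigma^2)}$). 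Hence the bad-event contribution cannot be made $O(1/\sqrt n)$ by Fuk--Nagaev, and the event is moreover indexed by a continuum of $y$, so a uniform-over-shell version would need an additional chaining argument you do not supply. The paper never compares $\varphi_\sigma\ast\mhat{\mu}_n$ with $\varphi_\sigma\ast\mu$: it lower-bounds the empirical density \emph{deterministically and simultaneously for all $y$} via Jensen's inequality applied to $\mhat{\mu}_n$ itself,
\begin{align}
  \varphi_\sigma\ast\mhat{\mu}_n(y_{1:t})\ \ge\ (2\pi\sigma^2)^{-dt/2}\,
  e^{-\frac{1}{2\sigma^2}\abs{y_{1:t}-\mean((\mhat{\mu}_n)_{1:t})}^2}\,
  e^{-\frac{1}{2\sigma^2}\var((\mhat{\mu}_n)_{1:t})},
\end{align}
so that only the scalar quantities $\mean(\mhat{\mu}_n)$, $M_2(\mhat{\mu}_n)$ (and two empirical exponential moments) need to be controlled, on a single global event $\Omega_0$ whose complement has probability $O(n^{-r/2})$ with $r>1$ and is absorbed by a crude moment bound plus H\"older. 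This is the device your proposal is missing.

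The second gap is the Cauchy--Schwarz in the sample expectation used to split off $A=(\e_\theta(\mhat{\mu}_n))^{1/(2p)}$, and again later to separate the random shell mass from the dual norm: after $\E[AB]\le \E[A^2]^{1/2}\E[B^2]^{1/2}$ you are left needing $L^2(\P)$ bounds on $\w^{(\sigma)}_{2p}(\mu_1,(\mhat{\mu}_n)_1)$ and on $\norm{\mu_{1:t+1}-(\mhat{\mu}_n)_{1:t+1}}_{\h^{t,\sigma,2p}_k}$, whereas the lemma's right-hand side, the quoted fast rate of \cite{nietert2021smooth}, and Lemma~\ref{lem:emp} all involve only first moments (and $\E[X^2]^{1/2}\ge\E[X]$ goes the wrong way). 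The paper's event $\Omega_0$ also resolves this: on $\Omega_0$ the prefactor and all random multiplicative constants are deterministic, so the expectation acts linearly on the dual norms. A further small slip: the bound $k^{dt-1}$ is the Lebesgue volume of the shell $E^{(t)}_k$ (used after converting the $\mhat{\mu}^\sigma_n(dy)$-integral to a Lebesgue integral through the density lower bound), not a bound on $\mu^\sigma(E^{(t)}_k)$, and your unverified claim that the exponent bookkeeping "balances" to $k^{(dt-1)/(4p)}e^{(k+2)^2/(2\sigma^2(4p)')}$ is exactly where these constants must be tracked.
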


The estimate in Lemma~\ref{lem:dpp norm bound} is obtained by taking expectations in \eqref{eq:lem dppbound} from Lemma~\ref{lem:dppbound}. The following classical result on sums of independent random variables will be used in the proof.

\begin{prop}[Corollary $4$ and Section $5$ in \cite{fuk1971probability}]\label{prop:sum of iid}
    Let $Y_1, Y_2, \ldots, $ be i.i.d random variables and define $\overline{Y}_n:=\frac{1}{n}\sum_{j=1}^n Y_j$. Suppose $\E[\abs{Y_1}^r]<\infty$ for some $r\ge 1$ and $x>0$.
    \begin{enumerate}[label=(\alph*)]
    \item If $1\le r\le 2$, then there exists a positive constant $C$ that depends only on $r$ such that
    \begin{align}
        \P(\abs{\overline{Y}_n-\E[Y_1]}>x)
        \le C \E[\abs{Y_1-\E[Y_1]}^r]n^{1-r}x^{-r}.
    \end{align}
    \item If $r\ge 2$, then there exist positive constants $c$ and $C$ that depends only on $r$ such that
    \begin{align}
        \P(\abs{\overline{Y}_n-\E[Y_1]}>x)
        \le C \E[\abs{Y_1-\E[Y_1]}^r]n^{1-r}x^{-r}+e^{-cnx^2/\E[\abs{Y_1-\E[Y_1]}^2]}.
    \end{align}
    \end{enumerate}
\end{prop}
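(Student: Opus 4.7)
The plan is to reduce to the centered case by setting $Z_j := Y_j - \E[Y_1]$ and $S_n := \sum_{j=1}^n Z_j = n(\overline{Y}_n - \E[Y_1])$, so that it suffices to estimate $\P(\abs{S_n}>nx)$ in terms of $\E[\abs{Z_1}^r]$ and, in part (b), the variance $\sigma^2 := \E[Z_1^2]$. The two moment ranges are then handled by entirely different tools.

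For part (a), with $1 \le r \le 2$, I would invoke the von Bahr--Esseen inequality: for i.i.d.\ centered random variables and $1 \le r \le 2$,
\begin{align}
\E[\abs{S_n}^r] \le 2 n \E[\abs{Z_1}^r].
\end{align}
Combining with Markov's inequality $\P(\abs{S_n}>nx) \le (nx)^{-r}\E[\abs{S_n}^r]$ yields the bound $2\E[\abs{Z_1}^r]n^{1-r}x^{-r}$ with $C=2$.

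For part (b), with $r \ge 2$, I would follow the classical Fuk--Nagaev truncation-plus-Bernstein strategy. Fix a threshold $M>0$ to be chosen at the end, and decompose $Z_j = Z_j^{\le} + Z_j^{>}$ with $Z_j^{\le} := Z_j\ind{\abs{Z_j}\le M}$ and $Z_j^{>} := Z_j\ind{\abs{Z_j}>M}$, so that $S_n = S_n^{\le}+S_n^{>}$. I would split $\{\abs{S_n}>nx\}$ into three pieces: (i) the ``heavy'' event $\{\max_{j\le n}\abs{Z_j}>M\}$, off which $S_n^{>}\equiv 0$ and whose probability is bounded by $n\P(\abs{Z_1}>M)\le n\E[\abs{Z_1}^r]/M^r$; (ii) the deterministic shift $\abs{\E[S_n^{\le}]} = n\abs{\E[Z_1\ind{\abs{Z_1}>M}]}\le n\E[\abs{Z_1}^r]/M^{r-1}$, which is absorbed into $nx/3$ provided $M^{r-1}\ge 3\E[\abs{Z_1}^r]/x$; and (iii) the bounded centered sum $S_n^{\le} - \E[S_n^{\le}]$, for which Bernstein's inequality gives
\begin{align}
\P\ip{\abs{S_n^{\le}-\E[S_n^{\le}]}>nx/3} \le 2\exp\ip{-\frac{(nx/3)^2/2}{n\sigma^2 + Mnx/9}},
\end{align}
since $\abs{Z_j^{\le}}\le M$ and $\var(Z_j^{\le})\le \sigma^2$. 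Setting $M$ proportional to $nx/r$ then balances the Bernstein denominator: in the moderate-deviation regime $n\sigma^2\gtrsim Mnx$, the first term dominates and yields the exponential $\exp(-c n x^2/\sigma^2)$, while in the complementary regime piece (i) alone is already smaller than the stated exponential bound, so nothing is lost and piece (i) provides the polynomial term $C(r)\E[\abs{Z_1}^r]n^{1-r}x^{-r}$.

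The main obstacle is the careful calibration of the truncation level $M$ as a function of $r$, $x$, and $\sigma^2$, together with the bookkeeping of the three pieces, so that the two terms in the statement emerge with explicit constants $c, C$ depending only on $r$. A fully worked-out version of this calibration appears in \cite{fuk1971probability}; essentially the same argument is also presented in Petrov's monographs on sums of independent random variables, and it is this classical result that is cited here without reproducing the details.
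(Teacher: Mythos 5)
The paper offers no proof of this proposition --- it is quoted from Fuk--Nagaev, so the only comparison is with the classical argument you are sketching. Your part (a) is correct: von Bahr--Esseen gives $\E[\abs{S_n}^r]\le 2n\E[\abs{Z_1}^r]$ for $1\le r\le 2$ and Markov's inequality finishes the job with $C=2$. In part (b), the worry about piece (ii) is harmless: if $M\asymp nx$ and $M^{r-1}<3\E[\abs{Z_1}^r]/x$, then the polynomial term on the right-hand side exceeds a constant depending only on $r$ and the claim is trivial.

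The genuine gap is piece (iii). With $M\asymp nx/r$, Bernstein's bound $\exp\bigl(-\tfrac{(nx/3)^2/2}{n\sigma^2+Mnx/9}\bigr)$ is useful only when $n\sigma^2\gtrsim Mnx$; in the complementary regime $nx^2\gtrsim\sigma^2$ --- which is not a negligible corner but includes fixed $x$ with $n\to\infty$ and the moderate-deviation scale $x\asymp n^{-1/2+\delta}$ --- the exponent is $\asymp -nx/M\asymp -r$, so Bernstein returns a \emph{constant}. Your remark that in this regime ``piece (i) alone is already smaller than the stated exponential bound'' does not address piece (iii) at all, and that constant is dominated neither by the polynomial term nor by $e^{-cnx^2/\sigma^2}$. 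Upgrading to Bennett's inequality does not save the calibration: with truncation ratio $\lambda=nx/M$ fixed, the bound is roughly $\bigl(1+\tfrac{nx^2}{3\lambda\sigma^2}\bigr)^{-\lambda/3}$, a fixed power of $\sigma^2/(nx^2)$; taking a unit-variance law with bounded $r$-th moment and $x\asymp n^{-1/2+\delta}$, this is $n^{-O(\delta\lambda)}$ while the claimed right-hand side is of order $n^{1-r/2-r\delta}+e^{-cn^{2\delta}}$, so for $r>2$ and $\delta$ small no choice of $\lambda=\lambda(r)$ closes the inequality. The missing idea in the actual Fuk--Nagaev proof is that the Chernoff bound for the truncated sum must use the $r$-th moment of the truncated summands, not only their variance and sup bound; this yields an additional term of the form $\bigl(eA_r/(\beta t y^{r-1})\bigr)^{\beta t/y}$ with $t=nx$, $y$ the truncation level and $A_r=n\E[\abs{Z_1}^r]$, which for $y\asymp t$ and after a short case analysis is dominated by $C_r A_r/t^r$ --- precisely covering the regime where Bernstein fails. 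So the overall truncation skeleton is right, but the step you delegate to ``calibration'' is not bookkeeping; it requires a sharper moment-generating-function estimate than the one you invoke.
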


\begin{proof}[Proof of Lemma~\ref{lem:dpp norm bound}]

Note that Lemma~\ref{lem:dppbound} can be rewritten as
\begin{align}
    \a\w^{(\sigma)}_p(\mu, \mhat{\mu}_n)
    \le C (\e_{q_1/(2\sigma^2)}(\mhat{\mu}_n))^{1/(2p)}
    \left(\w_{2p}^{(\sigma)}(\mu_1, (\mhat{\mu}_n)_1)+\sum_{t=1}^{T-1}\left(\sum_{k=0}^{\infty}\text{I}^{(t)}_k\right)^{1/(2p)}\right).
\end{align}
Here, we set $q_1:=\frac{4p(T-1)\beta}{1-4p(T-1)\beta}$ and define
\begin{align}
    \text{I}^{(t)}_{k}
    :=\int_{E^{(t)}_k}\w_{2p}((\mu^{\sigma})_{y_{1:t}}, (\mhat{\mu}^{\sigma}_n)_{y_{1:t}})^{2p} \mhat{\mu}^{\sigma}_n(dy)
\end{align}
for $t\in \{1,2,\ldots, T-1\}$ and a non-negative integer $k$.

Let us set $q_2:=\frac{8p\beta}{1-8p\beta}$ and define the set $\Omega_0$ via
\begin{align}
    \Omega_0
    :=&\{\abs{\mean(\mhat{\mu}_n)-\mean(\mu)}< 1\}
    \cap\{\abs{M_2(\mhat{\mu}_n)-M_2(\mu)}<1\}\\
    &\cap\{\vert\e_{q_1/(2\sigma^2)}(\mhat{\mu}_n)-\e_{q_1/(2\sigma^2)}(\mu)\vert<1\}
    \cap\{\vert\e_{q_2/(2\sigma^2)}(\mhat{\mu}_n)-\e_{q_2/(2\sigma^2)}(\mu)\vert<1\}.
\end{align}
As specified in \eqref{assume:q star}, $q^{*}(p, T, \beta)\ge \frac{3q_1}{2}$. Hence $q/q_1>\frac{3}{2}>1$ and Proposition \ref{prop:sum of iid} shows that
\begin{align}
    \P(\vert\e_{q_1/(2\sigma^2)}(\mhat{\mu}_n)-\e_{q_1/(2\sigma^2)}(\mu)\vert>1)
    \le Cn^{1-q/q_1}
\end{align}
for some constant $C>0$ that depends on $p, \beta, T, q, \sigma, \e_{q/(2\sigma^2)}(\mu)$. Similarly, we obtain from $q^{*}(p, T, \beta)\ge\frac{3q_2}{2}$ that
\begin{align}
    \P(\vert\e_{q_2/(2\sigma^2)}(\mhat{\mu}_n)-\e_{q_2/(2\sigma^2)}(\mu)\vert>1)
    \le Cn^{1-q/q_2}
\end{align}
for a constant $C>0$ that depends on $p, \beta, T, q, \sigma, \e_{q/(2\sigma^2)}(\mu)$. As a consequence, $\P(\Omega^{c}_0)\le C n^{-r/2}$ where $r:=2(q/(q_1\vee q_2)-1)>1$. By H\"older's inequality applied to $1/r'+1/r=1$, we find
\begin{equation}\label{eq:omega 0}
\begin{aligned}
    \E[\a\w^{(\sigma)}_p(\mu, \mhat{\mu}_n)\ind{\Omega^{c}_0}]
    &\le \E[(M_p(\mu^{\sigma})^{1/p}+M_p(\mhat{\mu}^{\sigma}_n)^{1/p})\ind{\Omega^{c}_0}]\\
    &\le (\E[(M_p(\mu^{\sigma})^{1/p}+M_p(\mhat{\mu}^{\sigma}_n)^{1/p})^{r'}])^{1/r'}
    \P(\Omega^{c}_0)^{1/r}
    \le Cn^{-1/2},
\end{aligned}
\end{equation}
recalling that $\E[M_p(\mhat{\mu}^{\sigma}_n)^{s}]<\infty$ for all $s>0$. Also, note from \eqref{assume:parameters} and \eqref{assume:q star} that 
\begin{align}
    q^{*}(p, T, \beta)
    >\frac{2(2p-1)}{\beta}
    > \frac{2(2p-1)}{1/(8p)}
    \ge 2(2p-1).
\end{align}
As illustrated in \ref{ov:fastrate} (see \cite{nietert2021smooth} for details), this implies the fast rate
\begin{align}
    \E[\w^{(\sigma)}_{2p}(\mu_1, (\mhat{\mu}_n)_1)]\le Cn^{-1/2}
\end{align}
for some constant $C$ that depends on $d, p, \sigma, \e_{q/(2\sigma^2)}(\mu)$. Combining these results with the inequality $(\sum_{k} \text{I}^{(t)}_{k})^{1/(2p)}\le \sum_{k}(\text{I}^{(t)}_k)^{1/(2p)}$,
\begin{align}
    \E[\a\w^{(\sigma)}_{p}(\mu, \mhat{\mu}_n)]
    \le Cn^{-1/2}+\E[\a\w^{(\sigma)}_p(\mu, \mhat{\mu}_n)\ind{\Omega_0}]
    \le Cn^{-1/2}+C\sum_{t=1}^{T-1}\sum_{k=0}^{\infty}\E\big[\big(\text{I}^{(t)}_{k}\big)^{1/(2p)} \ind{\Omega_0}\big].
\end{align}

Hence, it suffices to show that for $t\in \{1,2,\ldots, T-1\}$ and non-negative integer $k$ we have
\begin{align}\label{eq:dpp norm bound c1}
    \text{I}^{(t)}_{k}
    \le C k^{\frac{dt-1}{2}}
    e^{\frac{(4p-1)(k+2)^2}{4\sigma^2}}
    \vert\vert \mu_{1:t+1}-(\mhat{\mu}_n)_{1:t+1}\vert\vert^{2p}_{\h^{t, \sigma, 2p}_{k}} \text{ on } \Omega_0.
\end{align}
Now, let us show \eqref{eq:dpp norm bound c1}. From $\mean(\mu)=0$ and Lemma~\ref{lem:dualnorm} (in particular, the estimate \eqref{eq:w_2p dualnorm}),
\begin{align}
    \w_{2p}((\mu^{\sigma})_{y_{1:t}}, (\mhat{\mu}^{\sigma}_n)_{y_{1:t}})
    \le C e^{\frac{\beta}{2\sigma^2}\abs{y_{1:t}}^2}
    \vert\vert(\mu^{\sigma})_{y_{1:t}}- (\mhat{\mu}^{\sigma}_n)_{y_{1:t}} \vert\vert_{\f^{\sigma, 2p}} \text{ a.s. }
\end{align}
If $f\in \f^{\sigma, 2p}$, we have
\begin{align}
    &((\mu^{\sigma})_{y_{1:t}}- (\mhat{\mu}^{\sigma}_n)_{y_{1:t}})(f)\\
    &= (\mu^{\sigma})_{y_{1:t}}(f) - \frac{ \int \varphi_{\sigma}(y_{1:t}-z_{1:t}) f\ast\varphi_{\sigma}(z_{t+1}) \mhat{\mu}_n(dz)}{\varphi_{\sigma}\ast \mhat{\mu}_n(y_{1:t})}\\
    &=(\mu^{\sigma})_{y_{1:t}}(f)
    \frac{\int \varphi_{\sigma}(y_{1:t}-z_{1:t})(\mhat{\mu}_n-\mu)(dz)}{\varphi_{\sigma}\ast \mhat{\mu}_n(y_{1:t})}
    +\frac{\int \varphi_{\sigma}(y_{1:t}-z_{1:t})f\ast\varphi_{\sigma}(z_{t+1})(\mu-\mhat{\mu}_n)(dz)}{\varphi_{\sigma}\ast \mhat{\mu}_n(y_{1:t})}\\
    &\le (\mu^{\sigma})_{y_{1:t}}(\abs{f})
    \frac{\vert\vert \mu_{1:t+1}-(\mhat{\mu}_n)_{1:t+1}\vert\vert_{\h^{t, \sigma, 2p}_{k}}}{\varphi_{\sigma}\ast \mhat{\mu}_n(y_{1:t})}
    +\frac{\vert\vert  \mu_{1:t+1}-(\mhat{\mu}_n)_{1:t+1}\vert\vert_{\h^{t, \sigma, 2p}_{k}}}{\varphi_{\sigma}\ast \mhat{\mu}_n(y_{1:t})}\\
    &\le Ce^{\frac{\beta}{2\sigma^2}\abs{y_{1:t}}^2}
    \frac{\vert\vert  \mu_{1:t+1}-(\mhat{\mu}_n)_{1:t+1}\vert\vert_{\h^{t, \sigma, 2p}_{k}}}{\varphi_{\sigma}\ast \mhat{\mu}_n(y_{1:t})}
    +\frac{\vert\vert  \mu_{1:t+1}-(\mhat{\mu}_n)_{1:t+1}\vert\vert_{\h^{t, \sigma, 2p}_{k}}}{\varphi_{\sigma}\ast \mhat{\mu}_n(y_{1:t})}.
\end{align}
To obtain the last inequality, we divide both sides of the following estimate by $\varphi_{\sigma}\ast \mu(y_{1:t})$. 
\begin{align}
    \int \abs{f(y_{t+1})}\varphi_{\sigma}\ast \mu(y_{1:t+1})dy_{t+1}
    \le C \int \varphi_{\sigma}(y_{1:t}-z_{1:t}) e^{\frac{(2p-1)\abs{z_{t+1}}^2}{\sigma^2}}\mu(dz)
    \le C  e^{\frac{\beta}{2\sigma^2}\abs{y_{1:t}}^2}\varphi_{\sigma}\ast \mu(y_{1:t}).
\end{align}
This estimate comes from a straightforward application of Lemma~\ref{lem:gaussiancomputation}\ref{lem:gaussiancomputation_b} and Lemma~\ref{lem:lbofdensity}\ref{lem:lbofdensity_b} (Appendix~\ref{appendix:technical lemmas}): Lemma~\ref{lem:gaussiancomputation}\ref{lem:gaussiancomputation_b} shows the first inequality. To obtain the second inequality, we choose $a:=\beta$ and $h(z):=e^{(2p-1)\abs{z_{t+1}}^2/\sigma^2}$ in Lemma~\ref{lem:lbofdensity}\ref{lem:lbofdensity_b}. The moment assumption of Lemma~\ref{lem:lbofdensity}\ref{lem:lbofdensity_b} is satisfied as $q^{*}(p, T, \beta)\ge 2(2p-1)/\beta$.

Plugging these in and applying H\"older's inequality we conclude
\begin{align}
    \text{I}^{(t)}_k
    &\le C\int_{E^{(t)}_k} e^{\frac{2p\beta\abs{y_{1:t}}^2}{\sigma^2}} \frac{\vert\vert  \mu_{1:t+1}-(\mhat{\mu}_n)_{1:t+1}\vert\vert^{2p}_{\h^{t, \sigma, 2p}_{k}}}{(\varphi_{\sigma}\ast \mhat{\mu}_n(y_{1:t}))^{2p}}
    \mhat{\mu}^{\sigma}_n(dy)\\
    &\le C (\e_{4p\beta/\sigma^2}(\mhat{\mu}^{\sigma}_n))^{1/2}
    \left(\int_{E^{(t)}_k}\frac{\vert\vert  \mu_{1:t+1}-(\mhat{\mu}_n)_{1:t+1}\vert\vert^{4p}_{\h^{t, \sigma, 2p}_{k}}}{(\varphi_{\sigma}\ast \mhat{\mu}_n(y_{1:t}))^{4p-1}} dy\right)^{1/2}.
\end{align}
From $\beta<1/(8p)$ in \eqref{assume:parameters}, we have $\theta:=4p\beta/\sigma^2<1/(2\sigma^2)$. Thus, Lemma~\ref{lem:exp mu sig} (Appendix~\ref{appendix:technical lemmas}) yields
\begin{align}
    \e_{\theta}(\mhat{\mu}^{\sigma}_n)
    =(1-2\sigma^2 \theta)^{-dT/2}\e_{q_2/(2\sigma^2)}(\mhat{\mu}_n)
    \le (1-2\sigma^2 \theta)^{-dT/2}(1+\e_{q_2/(2\sigma^2)}(\mu))<\infty \text{ on } \Omega_0,
\end{align}
where we have used that 
\begin{align}
\frac{\theta}{1-2\sigma^2\theta} = \frac{4p\beta}{1-8p\beta}\frac{1}{\sigma^2} = \frac{8p\beta}{1-8p\beta} \frac{1}{2\sigma^2} =\frac{q_2}{2\sigma^2}.
\end{align}
Jensen's inequality with $\int \abs{y_{1:t}-z_{1:t} }^2\mhat{\mu}_n(dz)=\vert y_{1:t}-\mean((\mhat{\mu}_n)_{1:t}) \vert^2+\var((\mhat{\mu}_n)_{1:t})$ shows that
\begin{align}
    \varphi_{\sigma}\ast \mhat{\mu}_n(y_{1:t})
    &=(2\pi \sigma^2)^{-dt/2}\int e^{-\frac{\abs{y_{1:t}-z_{1:t}}^2}{2\sigma^2}}\mhat{\mu}_n(dz)\\
    &\ge (2\pi \sigma^2)^{-dt/2}e^{-\frac{1}{2\sigma^2}\int \abs{y_{1:t}-z_{1:t}}^2\mhat{\mu}_n(dz)}\\
    &\ge (2\pi \sigma^2)^{-dt/2}
    e^{-\frac{1}{2\sigma^2}\abs{y_{1:t}-\mean((\mhat{\mu}_n)_{1:t})}^2}
    e^{-\frac{1}{2\sigma^2}\var((\mhat{\mu}_n)_{1:t})}.
\end{align}
Since $\mean(\mu)=0$ we have $\vert \mean((\mhat{\mu}_n)_{1:t}) \vert \le 1$ on $\Omega_0$. Similarly we have $\var((\mhat{\mu}_n)_{1:t})\le M_2(\mhat{\mu}_n)\le 1+M_2(\mu)$. Using this and the triangle inequality, we obtain that on $\Omega_0$,
\begin{align}
    \frac{1}{\varphi_{\sigma}\ast \mhat{\mu}_n(y_{1:t})}
    \le (2\pi \sigma^2)^{dt/2}e^{\frac{1}{2\sigma^2}\abs{y_{1:t}-\mean((\mhat{\mu}_n)_{1:t})}^2}
    e^{\frac{1}{2\sigma^2}\var((\mhat{\mu}_n)_{1:t})}
    \le (2\pi \sigma^2)^{dt/2}e^{\frac{1}{2\sigma^2}(\abs{y_{1:t}}+1)^2}e^{\frac{1+M_2(\mu)}{2\sigma^2}}.
\end{align}
Using the fact that $\abs{y_{1:t}}\le k+1$ if $y_{1:t}\in E^{(t)}_k$,
\begin{align}
    \int_{E^{(t)}_k}\frac{\vert\vert  \mu_{1:t+1}-(\mhat{\mu}_n)_{1:t+1}\vert\vert^{4p}_{\h^{t, \sigma, 2p}_{k}}}{(\varphi_{\sigma}\ast \mhat{\mu}_n(y_{1:t}))^{4p-1}} dy
    &\le C\int_{E^{(t)}_k} e^{\frac{(4p-1)(\abs{y_{1:t}}+1)^2}{2\sigma^2}}\vert\vert  \mu_{1:t+1}-(\mhat{\mu}_n)_{1:t+1}\vert\vert^{4p}_{\h^{t, \sigma, 2p}_{k}} dy\\
    &\le C k^{dt-1}e^{\frac{(4p-1)(k+2)^2}{2\sigma^2}}\vert\vert  \mu_{1:t+1}-(\mhat{\mu}_n)_{1:t+1}\vert\vert^{4p}_{\h^{t, \sigma, 2p}_{k}} \text{ on } \Omega_0.
\end{align}
For the last inequality, we use that the Lebesgue measure of $E^{(t)}_k$ can be bounded above by $k^{dt-1}$ up to a constant factor. This establishes \eqref{eq:dpp norm bound c1}.
\end{proof}

To apply Lemma~\ref{lem:emp} to the upper bound in Lemma~\ref{lem:dpp norm bound}, 
we now construct suitable covers. Combining these coverings with Lemma~\ref{lem:emp} yields the estimate stated in Lemma~\ref{lem:h bound}.

For $t\in \{1,2,\ldots, T-1\}$, we define a cover $\{\b^{(t)}_j\}_{j=1}^{\infty}$ of $(\R^d)^t$ as follows: First, set
\begin{align}
    N^{(t)}_{0}:=\{0\},\quad
    N^{(t)}_k := \text{a maximal $1$-separated subset of } E^{(t)}_k \text{ for }k\ge 1.
\end{align}
Observe that $E^{(t)}_k\subseteq \bigcup_{y_{1:t}\in N^{(t)}_k}B(y_{1:t}, 1)$ where $B(y_{1:t}, 1)$ is a closed Euclidean ball of radius $1$ centered at $y_{1:t}$. The cover $\{\b^{(t)}_j\}_{j=1}^{\infty}$ is then defined by relabeling the collection $\bigcup_{k\ge 0}\{B(y_{1:t}, 1) : y_{1:t}\in N^{(t)}_k\}$. By \cite[Equation $(12)$]{nietert2021smooth},
\begin{align}\label{eq:bound on N}
    \big\vert N^{(t)}_k \big\vert\le (2(k+1)+1)^{dt}-(2k-1)^{dt}=
    (2k+3)^{dt}-(2k-1)^{dt} \le C k^{dt-1}
\end{align}
for a constant $C>0$ that depends on $d,t$.

\begin{lem}\label{lem:h bound} 
Let $\mu\in \sp((\R^d)^T)$, $t\in \{1,2,\ldots, T-1\}$ and $k\ge 0$. If $0<\delta<1$, then there exists a constant $C>0$ that depends only on $d,t, \sigma, p$, such that
\begin{align}
    &\E\left[\sqrt{n}\vert\vert \mu_{1:t+1}-(\mhat{\mu}_n)_{1:t+1} \vert\vert_{\h^{t, \sigma, 2p}_{k}}\right]\\
    &\le C\sum_{\ell=0}^{\infty}\Big(
    (k+1)^{m_{t,d}}e^{-\frac{(1-\delta)^2}{2\sigma^2}k^2}
    +\ind{\{\ell\ge \delta k -2\}}\Big)
    (\ell+1)^{m_{t,d}}e^{\frac{2p-1}{\sigma^2}(\ell+2)^2}
    \sum_{j\in C^{(t+1)}_{\ell}}
    \mu_{1:t+1}(\b^{(t+1)}_j)^{1/2}.
\end{align}
Here $m_{t,d}:=\lfloor d(t+1)/2 \rfloor+1$ and $C^{(t+1)}_{\ell}$ is the set of all $j$ for which the center of $\b^{(t+1)}_{j}$ is contained in $E^{(t+1)}_{\ell}$.
\end{lem}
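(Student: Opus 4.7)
The plan is to apply the empirical process bound Lemma \ref{lem:emp} to the class $\h^{t,\sigma,2p}_k$ with ambient dimension $N=d(t+1)$ and smoothness index $m_{t,d}=\lfloor d(t+1)/2\rfloor+1>N/2$, using the cover $\{\b^{(t+1)}_j\}$ (whose diameters all equal $2$). This yields
\begin{align}
\E\bigl[\sqrt{n}\,\norm{\mu_{1:t+1}-(\mhat{\mu}_n)_{1:t+1}}_{\h^{t,\sigma,2p}_k}\bigr]
\le C \sum_{\ell=0}^\infty \sum_{j\in C^{(t+1)}_\ell} M_j\,\mu_{1:t+1}(\b^{(t+1)}_j)^{1/2},
\end{align}
where $M_j:=\sup_{h\in\h^{t,\sigma,2p}_k}\norm{h}_{C^{m_{t,d}}(\b^{(t+1)}_j)}$. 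It then suffices to control $M_j$ uniformly over $j\in C^{(t+1)}_\ell$ in a form that isolates the $k$-dependent Gaussian decay and the $\ell$-dependent polynomial and exponential growth.

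For the derivative estimates, fix $j\in C^{(t+1)}_\ell$. Any $h\in\h^{t,\sigma,2p}_k$ is either $\varphi_\sigma(y_{1:t}-z_{1:t})$ (Form A) or $\varphi_\sigma(y_{1:t}-z_{1:t})\cdot f\ast\varphi_\sigma(z_{t+1})$ (Form B) for some $y_{1:t}\in E^{(t)}_k$ and $f\in\f^{\sigma,2p}$. Since $\p^\alpha\varphi_\sigma$ equals a polynomial of degree $|\alpha|$ times $\varphi_\sigma$, I have $|\p^\alpha_{z}\varphi_\sigma(y_{1:t}-z_{1:t})|\le C(1+|y_{1:t}-z_{1:t}|/\sigma)^{|\alpha|}\varphi_\sigma(y_{1:t}-z_{1:t})$. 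For $f\ast\varphi_\sigma$, I would upgrade Lemma \ref{lem:gaussiancomputation}\ref{lem:gaussiancomputation_a} (applied with $2p$ in place of $p$ and $\eta^2=1/(4p)'$) to derivatives: H\"older's inequality with $\rho=\n_{\sigma\eta}$ applied to $f\ast\p^\beta\varphi_\sigma(z_{t+1})$ reduces matters to a Gaussian integral with polynomial weight, giving
\begin{align}
|\p^\beta (f\ast\varphi_\sigma)(z_{t+1})|\le C(1+|z_{t+1}|)^{|\beta|}e^{(2p-1)|z_{t+1}|^2/\sigma^2}.
\end{align}
Applying the Leibniz rule to Form B and using $|z_{1:t}|\vee|z_{t+1}|\le\ell+2$ for $z_{1:t+1}\in\b^{(t+1)}_j$ together with $|y_{1:t}|\le k+1$, I obtain
\begin{align}
\norm{h}_{C^{m_{t,d}}(\b^{(t+1)}_j)}\le C(k+\ell+3)^{m_{t,d}}\,\varphi_\sigma(y_{1:t}-z_{1:t})\,e^{(2p-1)(\ell+2)^2/\sigma^2},
\end{align}
with the exponential factor dropped for Form A.

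The final step is a case split on whether $\ell<\delta k-2$ or $\ell\ge\delta k-2$. In the first regime (nontrivial only for $k\ge 1$), the reverse triangle inequality yields $|y_{1:t}-z_{1:t}|\ge k-(\ell+2)\ge(1-\delta)k$, so $\varphi_\sigma(y_{1:t}-z_{1:t})\le Ce^{-(1-\delta)^2 k^2/(2\sigma^2)}$; meanwhile $\ell+3\le\delta k+1\le k+1$ gives $(k+\ell+3)^{m_{t,d}}\le C(k+1)^{m_{t,d}}$. In the second regime I drop the Gaussian factor, and $k+1\le C_\delta(\ell+2)$ yields $(k+\ell+3)^{m_{t,d}}\le C(\ell+1)^{m_{t,d}}$. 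The case $k=0$ is immediate. In both regimes the bound for $M_j$ is dominated by
\begin{align}
C\bigl[(k+1)^{m_{t,d}}e^{-(1-\delta)^2 k^2/(2\sigma^2)}+\ind{\{\ell\ge\delta k-2\}}\bigr](\ell+1)^{m_{t,d}}e^{(2p-1)(\ell+2)^2/\sigma^2}
\end{align}
(using $(\ell+1)^{m_{t,d}}\ge 1$ to insert the missing factor in the decay regime). Plugging this into the Lemma \ref{lem:emp} estimate and grouping the sum over $j$ by $\ell$ gives the claimed inequality.

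The main obstacle is the derivative upgrade of Lemma \ref{lem:gaussiancomputation}\ref{lem:gaussiancomputation_a}, since any sloppy handling of the polynomial moments risks corrupting the $e^{(2p-1)|z_{t+1}|^2/\sigma^2}$ exponent that is critical elsewhere in the proof of Theorem \ref{thm:fast_rate}; however, this amounts to redoing the Gaussian integral from that lemma with an extra polynomial weight. The subsequent case analysis on $\ell$ versus $\delta k-2$ is then a matter of routine bookkeeping.
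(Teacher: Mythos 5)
Your proposal is correct in substance and follows essentially the same route as the paper: apply Lemma \ref{lem:emp} to $\h^{t,\sigma,2p}_k$ with the cover $\{\b^{(t+1)}_j\}$, bound the derivatives of the two types of $h$ (polynomial-times-Gaussian for $\varphi_\sigma(y_{1:t}-\cdot)$, and H\"older against $\n_{\sigma\tilde\eta}$ with $\tilde\eta=\sqrt{1/(4p)'}$ plus the Gaussian integral with polynomial weight for $\p^\beta(f\ast\varphi_\sigma)$, yielding $C(1+|z_{t+1}|)^{m_{t,d}}e^{(2p-1)|z_{t+1}|^2/\sigma^2}$), and then split on $\ell$ versus $\delta k-2$. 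The one discrepancy is in your bookkeeping for the regime $\ell\ge\delta k-2$: after merging the two polynomial factors into $(k+\ell+3)^{m_{t,d}}$ and dropping the Gaussian, you need $k+1\le C_\delta(\ell+2)$, so your final constant depends on $\delta$, whereas the lemma asserts a constant depending only on $d,t,\sigma,p$. The paper avoids this by keeping the factor $(1+|y_{1:t}-z_{1:t}|)^{m_{t,d}}$ paired with $\varphi_\sigma(y_{1:t}-z_{1:t})$, which is uniformly bounded by a constant depending only on $d,t,\sigma$, so in the indicator regime no comparison of $k$ with $\ell$ is needed. Your weaker version is still harmless for the proof of Theorem \ref{thm:fast_rate}, since $\delta$ is ultimately fixed in terms of $p$ and $q$ there, but to prove the lemma as stated you should handle that polynomial factor as the paper does rather than via $k\lesssim_\delta\ell$.
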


\begin{proof}
We apply Lemma \ref{lem:emp} to $\h^{t, \sigma, 2p}_{k}\subseteq C^{m_{t,d}}((\R^d)^{t+1})$ and the cover $\{\b^{(t+1)}_j\}_{j=1}^{\infty}$ of $(\R^d)^{t+1}$ defined above. Denoting $M_j:=\sup_{h\in \h^{t, \sigma, 2p}_{k}}\norm{h}_{C^{m_{t,d}}(\b^{(t+1)}_j)}$ we find
\begin{align}
    \E\left[\vert\vert \mu_{1:t+1}-(\mhat{\mu}_n)_{1:t+1} \vert\vert_{\h^{t, \sigma, 2p}_{k}}\right]
    &\le C \sum_{j=1}^{\infty}M_j \mu_{1:t+1}(\b^{(t+1)}_j)^{1/2}\\
    &\le C \sum_{\ell=0}^{\infty} \sum_{j\in C^{(t+1)}_{\ell}} M_j \mu_{1:t+1}(\b^{(t+1)}_j)^{1/2}.\label{eq:sum h bound}
\end{align}
It suffices to show that if $j\in C^{(t+1)}_{\ell}$,
\begin{align}
    M_{j}\le C\Big(
    (k+1)^{m_{t,d}}e^{-\frac{(1-\delta)^2}{2\sigma^2}k^2}
    +\ind{\{\ell\ge \delta k -2\}}\Big)
    (\ell+1)^{m_{t,d}}e^{\frac{2p-1}{\sigma^2}(\ell+2)^2}\label{eq:bound M_j}
\end{align}
for a constant $C>0$ depending only on $d, t, \sigma, p$.

To see this, let $h\in \h^{t, \sigma, 2p}_{k}$. We consider two possible forms of $h$. For each case, we bound the derivatives $\p^{\alpha}h(z_{1:t+1})$ for multi-indices $\alpha$ satisfying $\abs{\alpha}\le m_{t,d}$.

\textbf{Case 1:} First, suppose $h(z_{1:t+1})=\varphi_{\sigma}(y_{1:t}-z_{1:t})$ for $y_{1:t}\in E^{(t)}_k$. Note that $\p^{\alpha}h(z_{1:t+1})= p_{\alpha}(y_{1:t}-z_{1:t})\varphi_{\sigma}(y_{1:t}-z_{1:t})$ for some polynomial $p_{\alpha}$ of degree at most $m_{t,d}$. In particular, we can find a constant $C>0$ that depends on $d, t, \sigma$ such that
\begin{align}
    \abs{\p^{\alpha}h(z_{1:t+1})}
    \le C(1+\abs{y_{1:t}-z_{1:t}})^{m_{t,d}}
    \varphi_{\sigma}(y_{1:t}-z_{1:t})
\end{align}
for all $|\alpha|\le m_{t,d}$. Set $0<\delta<1$. If $\abs{z_{1:t}}\le \delta k$, then from $k\le \abs{y_{1:t}}<k+1$ and the triangle inequality, we obtain
\begin{align}
    (1-\delta)k
    \le \abs{y_{1:t}}-\abs{z_{1:t}}
    \le \abs{y_{1:t}-z_{1:t}}
    \le \abs{y_{1:t}}+\abs{z_{1:t}}
    \le (k+1)+\delta k
    < 2k+1.
\end{align}
In particular, if $\abs{z_{1:t}}\le \delta k$,
\begin{align}
    (1+\abs{y_{1:t}-z_{1:t}})^{m_{t,d}}
    \varphi_{\sigma}(y_{1:t}-z_{1:t})
    \le (2k+2)^{m_{t,d}}(2\pi \sigma^2)^{-dt/2}e^{-\frac{(1-\delta)^2 k^2}{2\sigma^2}}.
\end{align}
Hence we can find a constant $C>0$ depending on $d, t, \sigma$ such that
\begin{align}
    \abs{\p^{\alpha} h(z_{1:t+1})}
    \le C\left((k+1)^{m_{t,d}}e^{-\frac{(1-\delta)^2 k^2}{2\sigma^2}} +\ind{\{\abs{z_{1:t}}>\delta k\}}\right).\label{eq:first h bound}
\end{align}

\textbf{Case 2:} Now, suppose $h(z_{1:t+1})=\varphi_{\sigma}(y_{1:t}-z_{1:t})f\ast \varphi_{\sigma}(z_{t+1})$ for $f\in \f^{\sigma, 2p}$ and $y_{1:t}\in E^{(t)}_k$. Similar to the previous case, there exists a constant $C>0$ that depends on $d, t, \sigma$ such that for all $\abs{\alpha}\le m_{t,d}$,
\begin{align}
    &\abs{\p^{\alpha}h(z_{1:t+1})}\\
    &\le C(1+\abs{y_{1:t}-z_{1:t}})^{m_{t,d}} \varphi_{\sigma}(y_{1:t}-z_{1:t})
    \int \abs{f(w_{t+1})}(1+\abs{z_{t+1}-w_{t+1}})^{m_{t,d}}\varphi_{\sigma}(z_{t+1}-w_{t+1}) dw_{t+1}.
\end{align}
Set $\tilde{\eta}:=\sqrt{1/(4p)'}$. As in the proof of Lemma \ref{lem:gaussiancomputation}\ref{lem:gaussiancomputation_a} (Appendix~\ref{appendix:technical lemmas}) we estimate
\begin{align}
    &\int \abs{f(w_{t+1})}(1+\abs{z_{t+1}-w_{t+1}})^{m_{t,d}}\varphi_{\sigma}(z_{t+1}-w_{t+1}) dw_{t+1}\\
    &= \int \abs{f(w_{t+1})} (1+\abs{z_{t+1}-w_{t+1}})^{m_{t,d}} \varphi_{\sigma}(z_{t+1}-w_{t+1}) \varphi^{-1}_{\sigma\tilde{\eta}}(z_{t+1}-w_{t+1}) d\mathcal{N}_{\sigma \tilde{\eta}}(dw_{t+1})\\
    &\le 
    \norm{f}_{L^{(2p)'}(\n_{\sigma\tilde{\eta}}; \R^d)}
    \left(\int \varphi^{1-2p}_{\sigma \tilde{\eta}}(w_{t+1})(1+\abs{z_{t+1}-w_{t+1}})^{2pm_{t,d}} \varphi^{2p}_{\sigma}(z_{t+1}-w_{t+1}) dw_{t+1}\right)^{1/(2p)}\\
    &\le 
    D_{2p, d, \sigma}
    \left(\int \varphi^{1-2p}_{\sigma \tilde{\eta}}(w_{t+1})(1+\abs{z_{t+1}-w_{t+1}})^{2pm_{t,d}} \varphi^{2p}_{\sigma}(z_{t+1}-w_{t+1}) dw_{t+1}\right)^{1/(2p)}. \label{eq:cm integral}
\end{align}
The first inequality follows from H\"older's inequality. The constant $D_{2p, d, \sigma}$ defined in Lemma~\ref{lem:gaussiancomputation}\ref{lem:gaussiancomputation_0_poincare} (Appendix~\ref{appendix:technical lemmas}) shows the second inequality. To compute the Riemann integral appearing in \eqref{eq:cm integral}, observe that
\begin{align}
    \varphi^{1-2p}_{\sigma \tilde{\eta}}(w_{t+1})\varphi^{2p}_{\sigma}(z_{t+1}-w_{t+1})
    =c_1 e^{\frac{2p(2p-1)}{\sigma^2}\abs{z_{t+1}}^2}e^{-c_2\abs{w_{t+1}-c_3 z_{t+1}}^2}
\end{align}
for some $c_1, c_2, c_3>0$ that depend only on $d, p, \sigma$. It follows by choosing $a=(2p-1)/(2\sigma^2 \tilde{\eta}^2), b=2p/(2\sigma^2)$ in Lemma~\ref{lem:product gaussian} (Appendix~\ref{appendix:technical lemmas}) and noting that $\frac{ab}{b-a}=\frac{2p(2p-1)}{\sigma^2}$. Using the bound
\begin{align}
    (1+\abs{z_{t+1}-w_{t+1}})^{2pm_{t,d}}
    &\le C\left((1+\abs{z_{t+1}})^{2pm_{t,d}}
    +(1+\abs{w_{t+1}-c_3 z_{t+1}})^{2pm_{t,d}}\right)\\
    &\le C(1+\abs{z_{t+1}})^{2pm_{t,d}}
    (1+\abs{w_{t+1}-c_3 z_{t+1}})^{2pm_{t,d}}
\end{align}
for some constant $C>0$ depending on $p, d, \sigma, t$, we establish
\begin{align}
    &\int\varphi^{1-2p}_{\sigma \tilde{\eta}}(w_{t+1})(1+\abs{z_{t+1}-w_{t+1}})^{2pm_{t,d}} \varphi^{2p}_{\sigma}(z_{t+1}-w_{t+1}) dw_{t+1}\\
    &\le C(1+\abs{z_{t+1}})^{2p m_{t,d}} e^{\frac{2p(2p-1)}{\sigma^2}\abs{z_{t+1}}^2}
    \int (1+\abs{w_{t+1}-c_3z_{t+1}})^{2pm_{t,d}}e^{-c_2\abs{w_{t+1}-c_3z_{t+1}}^2} dw_{t+1}\\
    &= C(1+\abs{z_{t+1}})^{2p m_{t,d}} e^{\frac{2p(2p-1)}{\sigma^2}\abs{z_{t+1}}^2}
    \int (1+\abs{w_{t+1}})^{2pm_{t,d}}e^{-c_2\abs{w_{t+1}}^2} dw_{t+1}
\end{align}
for possibly different constant $C>0$. Plugging this back into \eqref{eq:cm integral},
\begin{align}
    \int \abs{f(w_{t+1})}(1+\abs{z_{t+1}-w_{t+1}})^{m_{t,d}}\varphi_{\sigma}(z_{t+1}-w_{t+1}) dw_{t+1}
    \le C(1+\abs{z_{t+1}})^{m_{t,d}} e^{\frac{2p-1}{\sigma^2}\abs{z_{t+1}}^2}
\end{align}
follows. Hence from \eqref{eq:first h bound} we establish
\begin{align}
    \abs{\p^{\alpha}h(z_{1:t+1})}
    \le C\left((k+1)^{m_{t,d}}e^{-\frac{(1-\delta)^2 k^2}{2\sigma^2}} +\ind{\{\abs{z_{1:t}}>\delta k\}}\right)
    (1+\abs{z_{t+1}})^{m_{t,d}}e^{\frac{2p-1}{\sigma^2}\abs{z_{t+1}}^2}
\end{align}
up to a constant factor $C$ that depends on $p, d, \sigma, t$.

Finally, we combine the two cases to deduce the desired estimate~\eqref{eq:bound M_j}. Consider $\b^{(t+1)}_j$, whose center is contained in $E^{(t+1)}_{\ell}$. If $z_{1:t+1}\in \b^{(t+1)}_{j}$, $\abs{z_{1:t+1}}\le \ell+2$. Consequently, $\sup_{z_{1:t+1}\in \b^{(t+1)}_j}\ind{\{\abs{z_{1:t}}>\delta k\}}\le \ind{\{\ell\ge \delta k -2\}}$. This implies
\begin{align}
    M_j
    =\sup_{h\in \h^{t, \sigma, 2p}_{k}}\norm{h}_{C^{m_{t,d}}(\b^{t+1}_j)}
    \le C\left((k+1)^{m_{t,d}}e^{-\frac{(1-\delta)^2 k^2}{2\sigma^2}} 
    +\ind{\{\ell\ge \delta k -2\}}\right)
    (\ell+3)^{m_{t,d}}e^{\frac{(2p-1)(\ell+2)^2}{\sigma^2}}.
\end{align}
Using $(\ell+3)^{m_{t,d}}\le C(\ell+1)^{m_{t,d}}$, we obtain the desired estimate.
\end{proof}

\begin{proof}[Proof of Theorem \ref{thm:fast_rate}]
Let $\beta$ satisfy \eqref{assume:parameters}. We first prove that the fast rate holds for $\mu\in \sp((\R^d)^T)$ if $\e_{q/(2\sigma^2)}(\mu)<\infty$ with $q>q^{*}(p, T, \beta)$. Since $\a\w^{(\sigma)}_p$-distance is translation invariant, we may assume $\mean(\mu)=0$. From Lemma \ref{lem:dpp norm bound}, it suffices to show that for $t\in \{1,2,\ldots, T-1\}$, the sum
\begin{align}
    \text{S}
    :=\sum_{k=0}^{\infty} 
    k^{\frac{dt-1}{4p}}
    e^{\frac{(k+2)^2}{2\sigma^2(4p)'}}
    \E\left[\sqrt{n}\vert\vert \mu_{1:t+1}-(\mhat{\mu}_n)_{1:t+1} \vert\vert_{\h^{t, \sigma, 2p}_{k}}\right]
\end{align}
is finite.
Let $0<\delta<1$. It follows from Lemma \ref{lem:h bound} and Fubini's theorem that this sum is bounded from above by $\text{S}_1+\text{S}_2$ up to a constant factor depending on $d, t, \sigma, p$, where
\begin{align}
    &\text{S}_1
    :=\sum_{k=0}^{\infty}k^{\frac{dt-1}{4p}}
    e^{\frac{(k+2)^2}{2\sigma^2(4p)'}}
    (k+1)^{m_{t,d}}e^{-\frac{(1-\delta)^2}{2\sigma^2}k^2}
    \sum_{\ell=0}^{\infty}(\ell+1)^{m_{t,d}}e^{\frac{2p-1}{\sigma^2}(\ell+2)^2}
    \sum_{j\in C^{(t+1)}_{\ell}}
    \mu_{1:t+1}(\b^{(t+1)}_j)^{1/2},\\
    &\text{S}_2
    :=\sum_{\ell=0}^{\infty} (\ell+1)^{m_{t,d}}e^{\frac{2p-1}{\sigma^2}(\ell+2)^2}
    \sum_{k\le (\ell+2)/\delta}
    k^{\frac{dt-1}{4p}}
    e^{\frac{(k+2)^2}{2\sigma^2(4p)'}}
    \sum_{j\in C^{(t+1)}_{\ell}}
    \mu_{1:t+1}(\b^{(t+1)}_j)^{1/2}.
\end{align}
Since $\e_{q/(2\sigma^2)}(\mu)<\infty$, Markov's inequality and \eqref{eq:bound on N} show that
\begin{align}
    \sum_{j\in C^{(t+1)}_{\ell}}
    \mu_{1:t+1}(\b^{(t+1)}_j)^{1/2}
    \le \big\vert N^{(t+1)}_{\ell}\big\vert \mu(\{y: \abs{y}\ge \ell-1\})^{1/2}
    \le C \ell^{d(t+1)-1}e^{-\frac{q}{4\sigma^2}(\ell-1)^2}.
\end{align}
Comparing leading terms, the sum $\text{S}_1$ is finite if $(1-\delta)^2>1/(4p)'$ and $q>4(2p-1)$. From the bound
\begin{align}
    \sum_{k\le (\ell+2)/\delta}
    k^{\frac{dt-1}{4p}}
    e^{\frac{(k+2)^2}{2\sigma^2(4p)'}}
    \le (\lfloor(\ell+2)/\delta \rfloor +1)
    (\lfloor(\ell+2)/\delta \rfloor +1)^{\frac{dt-1}{4p}}\exp\left(\frac{(\lfloor(\ell+2)/\delta \rfloor +3)^2}{2\sigma^2(4p)'}\right)
\end{align}
we conclude that the sum $\text{S}_2$ is finite if $q>4(2p-1)+2/(\delta^2(4p)')$. Note that $(1-\delta)^2>1/(4p)'$ if and only if $2/(\delta^2(4p)')>2/(\sqrt{(4p)'}-1)^2$. Since
\begin{align}
    q>q^{*}(p, T, \beta)\ge 4(2p-1)+\frac{2}{(\sqrt{(4p)'}-1)^2},
\end{align}
we can choose a $\delta$ so that $\text{S}_1$ and $\text{S}_2$ are finite. This establishes the desired result.

Now let us choose $\beta=\frac{1}{4p(T+9)}$ in the parameter set \eqref{assume:parameters}. Since $T\ge 2$ and $p>1$, it is evident that the maximum between the first three terms in \eqref{assume:q star} is $8p(2p-1)(T+9)$. By the Cauchy--Schwarz inequality,
\begin{align}
    4(2p-1)+\frac{2}{(\sqrt{(4p)'}-1)^{2}}
    &=4(2p-1)+2(\sqrt{(4p)'}+1)^2(4p-1)^2\\
    &\le 4(2p-1)+4((4p)'+1)(4p-1)^2
    =4(2p-1)+4(8p-1)(4p-1).
\end{align}
The quadratic inequality $4(2p-1)+4(8p-1)(4p-1)\le 88p(2p-1)$ holds for all $p>1$. From $8p(2p-1)(T+9)\ge 88p(2p-1)$, we have $q^{*}(p, T, \beta)=8p(2p-1)(T+9)$. This proves Theorem \ref{thm:fast_rate}.
\end{proof}

\appendix

\section{Technical lemmas}\label{appendix:technical lemmas}

In this section, we collect several technical lemmas and estimates involving Gaussian kernels that are used throughout the paper.

\begin{lem}\label{lem:lbofdensity}
    Let $\mu\in \sp_2((\R^d)^T)$, $x\in (\R^d)^T$ and $t\in \{1,2,\ldots, T\}$.
    \begin{enumerate}[label=(\alph*)]
        \item\label{lem:lbofdensity_a} If $0<a_1<a_2$, then
        \begin{align}
            (\varphi^{a_2}_{\sigma}\ast \mu(x_{1:t}))^{1/a_2}
            \le e^{\frac{(1-a_1/a_2)}{2\sigma^2}\var(\mu_{1:t})}
            e^{\frac{(1-a_1/a_2)}{2\sigma^2}\abs{x_{1:t}-\mean(\mu_{1:t})}^2}
            (\varphi^{a_1}_{\sigma}\ast \mu(x_{1:t}))^{1/a_1}.
        \end{align}
        \item\label{lem:lbofdensity_b} If $0<a<1$ and $h\in L^{1/a}(\mu)$, then
        \begin{align}
            \int \varphi_{\sigma}(x_{1:t}-y_{1:t})\abs{h(y_{1:T})}\mu(dy)
            \le \norm{h}_{L^{1/a}(\mu)}
            e^{\frac{a}{2\sigma^2}\var(\mu_{1:t})}
            e^{\frac{a}{2\sigma^2}\abs{x_{1:t}-\mean(\mu_{1:t})}^2}
            \varphi_{\sigma}\ast \mu(x_{1:t}).
        \end{align}
    \end{enumerate}
\end{lem}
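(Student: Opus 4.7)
The plan is to reduce both parts to elementary applications of Jensen's inequality and Hölder's inequality, exploiting the explicit Gaussian structure of $\varphi_\sigma$. Both statements are quantitative comparisons of $L^a(\mu)$-type functionals of $y \mapsto \varphi_\sigma(x_{1:t}-y_{1:t})$, and the exponential prefactor will come out as exactly the ``Jensen gap'' for the logarithm.

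For part (a), I would set $H(y) := |x_{1:t}-y_{1:t}|^2/(2\sigma^2)$ and $J_a := \int e^{-aH}\,d\mu$, so that $\varphi_\sigma(x_{1:t}-y_{1:t})^a = (2\pi\sigma^2)^{-dta/2} e^{-aH(y)}$. After the common $(2\pi\sigma^2)^{-dt/2}$ factor cancels on both sides of the claimed inequality, (a) reduces to
\[
J_{a_2}^{1/a_2} \le e^{(1-a_1/a_2)\,M}\, J_{a_1}^{1/a_1}, \qquad M := \int H\,d\mu = \frac{|x_{1:t}-\mean(\mu_{1:t})|^2+\var(\mu_{1:t})}{2\sigma^2},
\]
where the identity for $M$ comes from the usual mean--variance decomposition of $\int|x_{1:t}-y_{1:t}|^2\mu_{1:t}(dy_{1:t})$. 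Two ingredients finish the argument. First, since $H\ge 0$ and $a_2>a_1$, the pointwise bound $e^{-a_2 H}\le e^{-a_1 H}$ gives $J_{a_2}\le J_{a_1}$, so $J_{a_2}^{1/a_2}\le J_{a_1}^{1/a_2}$. Second, Jensen's inequality for the concave $\log$ yields
\[
\log J_{a_1} = \log \int e^{-a_1 H}\,d\mu \;\ge\; -a_1\int H\,d\mu = -a_1 M,
\]
i.e.\ $J_{a_1}^{-1}\le e^{a_1 M}$. Combining the two,
\[
J_{a_2}^{1/a_2} \le J_{a_1}^{1/a_2} = J_{a_1}^{1/a_1}\cdot J_{a_1}^{1/a_2-1/a_1} \le J_{a_1}^{1/a_1}\cdot e^{(1-a_1/a_2)\,M},
\]
using $a_1(1/a_1-1/a_2)=1-a_1/a_2$. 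This proves (a).

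For part (b), I would apply Hölder's inequality with conjugate exponents $1/a$ and $1/(1-a)$ (note $0<a<1$):
\[
\int \varphi_\sigma(x_{1:t}-y_{1:t})\,|h(y)|\,\mu(dy) \le \|h\|_{L^{1/a}(\mu)}\left(\int \varphi_\sigma(x_{1:t}-y_{1:t})^{1/(1-a)}\mu(dy)\right)^{1-a},
\]
and then invoke part (a) with $a_2=1/(1-a)$ and $a_1=1$, for which $1-a_1/a_2 = a$. This bounds the second factor by $e^{aM}\,\varphi_\sigma\ast\mu(x_{1:t})$, which is exactly the right-hand side of (b).

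I do not foresee any real obstacle: the only things to keep track of are the cancellation of the $(2\pi\sigma^2)^{-dt/2}$ normalization (which is invariant under the map $a\mapsto J_a^{1/a}$) and the algebraic matching of the exponents $1-a_1/a_2$ and $a$. Everything else is a one-line application of a standard inequality.
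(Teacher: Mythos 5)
Your proof is correct and follows essentially the same route as the paper: Jensen's inequality for the logarithm plus the pointwise monotonicity of the normalized Gaussian kernel in part (a), and H\"older with exponents $1/a$, $1/(1-a)$ followed by part (a) with $a_1=1$, $a_2=1/(1-a)$ in part (b). Your $J_a^{1/a}$ normalization is just a repackaging of the paper's use of $(2\pi\sigma^2)^{dt/2}\varphi_\sigma\le 1$, so there is nothing substantively different to flag.
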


\begin{proof}
\ref{lem:lbofdensity_a}: From Jensen's inequality and $$\int \abs{x_{1:t}-y_{1:t}}^2 \mu(dy) = \vert x_{1:t}-\mean(\mu_{1:t})\vert^2 + \var(\mu_{1:t})$$ we obtain
\begin{align}
    \varphi^{a_1}_{\sigma}\ast \mu(x_{1:t})
    &=\int (2\pi \sigma^2)^{-a_1 dt/2}e^{-\frac{a_1 \abs{x_{1:t}-y_{1:t}}^2}{2\sigma^2}} \mu(dy)\\
    &\ge (2\pi \sigma^2)^{-a_1 dt/2}e^{-\frac{a_1 \int \abs{x_{1:t}-y_{1:t}}^2 \mu(dy)}{2\sigma^2}}
    = (2\pi \sigma^2)^{-a_1 dt/2}
    e^{-\frac{a_1 }{2\sigma^2}\var(\mu_{1:t})}
    e^{-\frac{a_1 }{2\sigma^2}\abs{x_{1:t}-\mean(\mu_{1:t})}^2}.
\end{align}
Thus, we obtain
\begin{align}
    (\varphi^{a_1}_{\sigma}\ast \mu(x_{1:t}))^{1/a_1}
    &=(\varphi^{a_1}_{\sigma}\ast \mu(x_{1:t}))^{1/a_2}(\varphi^{a_1}_{\sigma}\ast \mu(x_{1:t}))^{1/a_1-1/a_2}\\
    &\ge (\varphi^{a_1}_{\sigma}\ast \mu(x_{1:t}))^{1/a_2}
    \left((2\pi \sigma^2)^{-a_1 dt/2}
    e^{-\frac{a_1 }{2\sigma^2}\var(\mu_{1:t})}
    e^{-\frac{a_1 }{2\sigma^2}\abs{x_{1:t}-\mean(\mu_{1:t})}^2}\right)^{1/a_1-1/a_2}\\
    &=[(2\pi\sigma^2)^{-dt(a_2-a_1)/2}\varphi^{a_1}_{\sigma}\ast \mu(x_{1:t})]^{1/a_2}
    e^{-\frac{(1-a_1/a_2)}{2\sigma^2}\var(\mu_{1:t})}
    e^{-\frac{(1-a_1/a_2)}{2\sigma^2}\abs{x_{1:t}-\mean(\mu_{1:t})}^2}.
\end{align}
Since $(2\pi \sigma^2)^{dt/2}\varphi_{\sigma}\le 1$ on $(\R^d)^t$ by definition and $a_2>a_1$, we have
\begin{align}
    [(2\pi\sigma^2)^{-dt(a_2-a_1)/2}\varphi^{a_1}_{\sigma}\ast \mu(x_{1:t})]^{1/a_2}
    &= [(2\pi\sigma^2)^{dt/2} \varphi_\sigma)]^{a_1-a_2} (\varphi_\sigma^{a_2} \ast \mu(x_{1:t}))^{1/a_2}\\
    &\ge (\varphi^{a_2}_{\sigma}\ast \mu(x_{1:t}))^{1/a_2}.
\end{align}
This shows the desired result.

\ref{lem:lbofdensity_b}: Set $r:=1/a>1$ and use H\"older's inequality to see that
\begin{align}
    \int \varphi_{\sigma}(x_{1:t}-y_{1:t})\abs{h(y_{1:T})}\mu(dy_{1:T})
    \le \norm{h}_{L^{r}(\mu)}(\varphi^{r'}_{\sigma}\ast \mu(x_{1:t}))^{1/r'}.
\end{align}
Applying the previous part \ref{lem:lbofdensity_a} with $a_1=1$ and $a_2=r'>1$ we obtain
\begin{align}
    (\varphi^{r'}_{\sigma}\ast \mu(x_{1:t}))^{1/r'}
    \le e^{\frac{a}{2\sigma^2}\var(\mu_{1:t})}
    e^{\frac{a}{2\sigma^2}\abs{x_{1:t}-\mean(\mu_{1:t})}^2}
    \varphi_{\sigma}\ast \mu(x_{1:t}).
\end{align}
This ends the proof.
\end{proof}

\begin{lem}\label{lem:product gaussian} For any $0<a<b$, $e^{a\abs{y}^2-b\abs{x-y}^2}=e^{\frac{ab}{b-a}\abs{x}^2} e^{-(b-a)\abs{y-\frac{b}{b-a}x}^2}$. In particular,
\begin{align}
    \int_{\R^d} e^{a\abs{y}^2-b\abs{x-y}^2}dy
    =\left(\frac{\pi}{b-a}\right)^{d/2}e^{\frac{ab}{b-a}\abs{x}^2}.
\end{align}
\end{lem}
\begin{proof}
Note that $a\abs{y}^2-b\abs{x-y}^2 = -(b-a)\vert y-\frac{b}{b-a}x \vert^2+\frac{ab}{b-a}\abs{x}^2$. Hence,
\begin{align}
    \int_{\R^d} e^{a\abs{y}^2-b\abs{x-y}^2}dy
    =e^{\frac{ab}{b-a}\abs{x}^2} \int_{\R^d} e^{-(b-a)\abs{y-\frac{b}{b-a}x}^2}dy
    =e^{\frac{ab}{b-a}\abs{x}^2} \left(\frac{\pi}{b-a}\right)^{d/2}.
\end{align}
\end{proof}

Recall the function class $\f^{\sigma, p}$ and $\eta=\sqrt{1/(2p)'}$ defined in \eqref{eq:fc_f} in Section~\ref{sec:lipkernel}.

\begin{lem}\label{lem:gaussiancomputation}
Let $f\in \f^{\sigma, p}$. Then the following statements hold.
\begin{enumerate}[label=(\alph*)]
    \item\label{lem:gaussiancomputation_0_poincare} There exists a constant $D_{p, d, \sigma}>0$ that depends only on $p, d, \sigma$ such that
    \begin{align}\label{eq:poincareconstant}
        \norm{f}_{L^{p'}(\n_{\sigma \eta})}\le D_{p, d, \sigma}.
    \end{align}
    \item\label{lem:gaussiancomputation_a} If $x\in \R^d$, then for a constant $D_{p, d, \sigma}$ in \eqref{eq:poincareconstant},
    \begin{align}
        \abs{f}\ast \varphi_{\sigma}(x)
        \le D_{p, d, \sigma}(2^{1/p}/(2p)')^{d/2}
        e^{\frac{(p-1)\abs{x}^2}{\sigma^2}}.
    \end{align} 
    \item\label{lem:gaussiancomputation_b} If $\mu\in \sp((\R^d)^T)$, $x, y\in (\R^d)^T$ and $t\in \{1,2,\ldots, T-1\}$, then for a constant $D_{p, d, \sigma}$ in \eqref{eq:poincareconstant},
    \begin{align}
        \int \abs{f(x_{t+1})}\varphi_{\sigma}\ast \mu(x_{1:t+1}) dx_{t+1}
        \le D_{p, d, \sigma}(2^{1/p}/(2p)')^{d/2}
        \int \varphi_{\sigma}(x_{1:t}-y_{1:t})e^{\frac{(p-1)\abs{y_{t+1}}^2}{\sigma^2}} \mu(dy).
    \end{align} 
\end{enumerate}
\end{lem}
\begin{proof}
\ref{lem:gaussiancomputation_0_poincare}: Let us recall that Gaussian measures on $\R^d$ satisfy the $r$-Poincar\'e inequality \cite[Theorem 2.4]{milman2009role} for all $1\le r<\infty$. In particular, for any $\xi>0$ and $1\le r<\infty$, there exists a constant $D>0$ depending only on $d, r, \xi$ such that
\begin{align}
    \norm{\psi-\n_{\xi}(\psi)}_{L^r(\n_{\xi})}
    \le D \norm{\nab \psi}_{L^r (\n_{\xi}; \R^d)} \text{ for all } \psi\in C^{\infty}_c(\R^d).
\end{align}
By taking $\xi=\sigma \eta$ and $r=p'$, we find a constant $D_{p, d, \sigma}>0$ that depends only on $p, d, \sigma$ such that
\begin{align}
    \norm{\psi-\n_{\sigma\eta}(\psi)}_{L^{p'}(\n_{\sigma\eta})}
    \le D_{p, d, \sigma} \norm{\nab \psi}_{L^{p'} (\n_{\sigma\eta}; \R^d)} \text{ for all } \psi\in C^{\infty}_c(\R^d).
\end{align}
This show the desired estimate.

\ref{lem:gaussiancomputation_a}: We first apply H\"older's inequality and \eqref{eq:poincareconstant} to obtain
\begin{align}
    \abs{f}\ast \varphi_{\sigma}(x)
    =\int \abs{f(y)}\varphi_{\sigma\eta}^{-1}(y) \varphi_{\sigma}(x-y) \n_{\sigma\eta}(dy)
    &\le \norm{f}_{L^{p'}(\n_{\sigma\eta}; \R^d)}\left(\int \varphi^{1-p}_{\sigma\eta}(y)\varphi^p_{\sigma}(x-y) dy\right)^{1/p}\\
    &\le D_{p, d, \sigma}\left(\int \varphi^{1-p}_{\sigma\eta}(y)\varphi^p_{\sigma}(x-y) dy\right)^{1/p}.\label{eq:gaussiancomputationpf}
\end{align}
To conclude the proof of \ref{lem:gaussiancomputation_a}, it suffices to show that
\begin{align}
    \int \varphi_{\sigma\eta}^{1-p}(y)\varphi^p_{\sigma}(x-y)dy
    =2^{d/2}(1-1/(2p))^{dp/2}  e^{\frac{p(p-1)\abs{x}^2}{\sigma^2}}.
\end{align}
Note that
\begin{align}\label{eq:product gaussian 2}
    \varphi^{1-p}_{\sigma\eta}(y)\varphi^p_{\sigma}(x-y)
    =(2\pi \sigma^2\eta^2)^{d(p-1)/2}
    (2\pi \sigma^2)^{-dp/2}
    e^{a\abs{y}^2-b\abs{x-y}^2}
\end{align}
for $a:=(p-1)/(2\sigma^2\eta^2)$ and $b:=p/(2\sigma^2)$. We have $b>a$ as
\begin{align}
    \frac{p-1}{\eta^2} =\frac{p-1}{2p-1} 2p <p.
\end{align}
Hence, we apply Lemma~\ref{lem:product gaussian} to compute
\begin{align}
    \int \varphi_{\sigma\eta}^{1-p}(y)\varphi^p_{\sigma}(x-y)dy
    &=(2\pi \sigma^2\eta^2)^{d(p-1)/2}
    (2\pi \sigma^2)^{-dp/2} \int e^{a\abs{y}^2-b\abs{x-y}^2}dy\\
    &=(2\pi \sigma^2\eta^2)^{d(p-1)/2}
    (2\pi \sigma^2)^{-dp/2}(\pi /(b-a))^{d/2} e^{\frac{ab}{b-a}\abs{x}^2}.
\end{align}
This shows the desired result noting that 
\begin{align*}
b-a =\frac{p}{2\sigma^2} - \frac{p-1}{2\sigma^2 \eta^2} = \frac{p}{2\sigma^2} \Big( 1- \frac{2p-2}{2p-1}\Big) = \frac{p}{2\sigma^2} \frac{1}{2p-1} = \frac{1}{4\sigma^2 \eta^2},
\end{align*}
and 
\begin{align}\label{eq:b-a}
\frac{ab}{b-a} = \frac{(p-1) p}{(2\sigma^2)^2\eta^2} 4\sigma^2\eta^2 = \frac{(p-1)p}{\sigma^2}.
\end{align}

\ref{lem:gaussiancomputation_b}: By Fubini's theorem,
\begin{align}
    \int \abs{f(x_{t+1})}\varphi_{\sigma}\ast \mu(x_{1:t+1}) dx_{t+1}
    &= \int \int \abs{f(x_{t+1})} \varphi_{\sigma}(x_{1:t+1} -y_{1:t+1}) \mu(dy_{1:t+1})  dx_{t+1}\\
    &=\int \varphi_{\sigma}(x_{1:t}-y_{1:t})\abs{f}\ast \varphi_{\sigma}(y_{t+1})\mu(dy).
\end{align}
We now apply the previous result \ref{lem:gaussiancomputation_a} to bound $\abs{f}\ast \varphi_{\sigma}(y_{t+1})$. This concludes the proof.
\end{proof}

\begin{lem}\label{lem:exp mu sig}
    Let $0<\theta<1/(2\sigma^2)$ and $\mu\in \sp((\R^d)^T)$. Then $\e_{\theta}(\mu^{\sigma})=(1-2\sigma^2\theta)^{-dT/2}\e_{\frac{\theta}{1-2\sigma^2 \theta}}(\mu)$.
\end{lem}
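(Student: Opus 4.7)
The plan is to compute $\mathcal{E}_\theta(\mu^\sigma)$ directly from the definition of the Gaussian convolution by interchanging the order of integration (Fubini) and evaluating the resulting Gaussian integral over the noise variable via completion of squares. The condition $\theta<1/(2\sigma^2)$ is precisely what ensures the Gaussian integral below converges.

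Write $\mathcal{E}_\theta(\mu^\sigma) = \int e^{\theta|z|^2}\,\mu^\sigma(dz)$ and use $\mu^\sigma = \mu\ast\mathcal{N}_\sigma$ to realize $z = x+y$ where $x\sim\mu$ and $y\sim\mathcal{N}_\sigma$ are independent. Then, by Fubini,
\begin{align}
    \mathcal{E}_\theta(\mu^\sigma)
    = \int\!\!\int e^{\theta|x+y|^2}\,\varphi_\sigma(y)\,dy\,\mu(dx)
    = (2\pi\sigma^2)^{-dT/2}\int\!\!\int e^{\theta|x+y|^2 - |y|^2/(2\sigma^2)}\,dy\,\mu(dx).
\end{align}
For fixed $x$, the exponent in the inner integral is a quadratic in $y$. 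Setting $a := 1/(2\sigma^2) - \theta > 0$, I would expand $\theta|x+y|^2 - |y|^2/(2\sigma^2) = \theta|x|^2 + 2\theta\, x\cdot y - a|y|^2$ and complete the square in $y$ to rewrite this as
\begin{align}
    \theta|x|^2 + \frac{\theta^2}{a}|x|^2 - a\left|y - \tfrac{\theta}{a}x\right|^2
    = \frac{\theta}{1 - 2\sigma^2\theta}\,|x|^2 - a\left|y - \tfrac{\theta}{a}x\right|^2,
\end{align}
using $\theta + \theta^2/a = \theta(a+\theta)/a = \theta/(2\sigma^2 a) = \theta/(1-2\sigma^2\theta)$.

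Then the standard Gaussian identity $\int e^{-a|y-c|^2}\,dy = (\pi/a)^{dT/2}$ gives the normalization $(2\pi\sigma^2)^{-dT/2}(\pi/a)^{dT/2} = (2\sigma^2 a)^{-dT/2} = (1-2\sigma^2\theta)^{-dT/2}$, so
\begin{align}
    \mathcal{E}_\theta(\mu^\sigma)
    = (1-2\sigma^2\theta)^{-dT/2}\int e^{\frac{\theta}{1-2\sigma^2\theta}|x|^2}\,\mu(dx)
    = (1-2\sigma^2\theta)^{-dT/2}\,\mathcal{E}_{\theta/(1-2\sigma^2\theta)}(\mu),
\end{align}
which is the claimed identity. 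There is no real obstacle here — the proof is a one-line completion of squares followed by the Gaussian integral formula — but I would be careful to verify that $a>0$ under the hypothesis $\theta<1/(2\sigma^2)$, as this both justifies the convergence of the inner Gaussian integral and keeps the factor $1-2\sigma^2\theta$ positive on both sides of the identity.
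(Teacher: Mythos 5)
Your proof is correct and follows essentially the same route as the paper: Fubini's theorem followed by completing the square in the Gaussian integral and evaluating the normalization, yielding $(2\sigma^2 a)^{-dT/2}=(1-2\sigma^2\theta)^{-dT/2}$ with exponent $\theta+\theta^2/a=\theta/(1-2\sigma^2\theta)$. The only cosmetic difference is that the paper invokes its previously recorded identity \eqref{eq:product gaussian} (itself proved by the same completion of squares) instead of carrying out the square completion inline.
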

\begin{proof}
By Fubini's theorem,
\begin{align}
    \e_{\theta}(\mu^{\sigma})
    =\int e^{\theta \abs{x}^2}\varphi_{\sigma}\ast \mu(x) dx
    =(2\pi \sigma^2)^{-dT/2}\int\left(\int_{(\R^d)^T} e^{\theta \abs{x}^2}e^{-\frac{\abs{x-y}^2}{2\sigma^2}}dx\right) \mu(dy).
\end{align}
Applying Lemma~\ref{lem:product gaussian} with $a=\theta$ and $b=1/(2\sigma^2)$ and computing
\begin{align}
    b-a
    =\frac{1}{2\sigma^2}-\theta, \quad    
    \frac{ab}{b-a}
    =\frac{\theta}{2\sigma^2} \frac{1}{1/(2\sigma^2)-\theta}
    =\frac{\theta}{1-2\sigma^2\theta}
\end{align}
yields
\begin{align}
    \int_{(\R^d)^T} e^{\theta \abs{x}^2}e^{-\frac{\abs{x-y}^2}{2\sigma^2}}dx
    =\left(\frac{\pi}{1/(2\sigma^2)-\theta}\right)^{dT/2}e^{\frac{\theta}{1-2\sigma^2 \theta}\abs{x}^2}.
\end{align}
This concludes the proof.
\end{proof}

\section{The fast rate is sharp}\label{appendix:sharp}

Let $a, b\in (\R^d)^T$ and $a\neq b$. Consider $\mu=\frac{1}{2}(\delta_a+\delta_b)$ and its empirical measure $\mhat{\mu}_n=\frac{1}{n}\sum_{j=1}^{n}\delta_{X^{(j)}}$ where $X^{(1)}, \ldots, X^{(n)}$ are i.i.d samples from $\mu$. In \cite[Remark after Lemma $5$]{nietert2021smooth}, it is shown that $\w_p^{(\sigma)}(\mhat{\mu}_n, \mu)$ has the lower bound
\begin{align}
    2^{-dT/2}\left(\E^{\mu}\left[e^{\frac{\abs{X}^2}{2\sigma^2}}\right]\right)^{-1}
    \sup\left\{(\mu^{\sigma}-\mhat{\mu}^{\sigma}_n)(\varphi) : \varphi\in C_c^{\infty}((\R^d)^T), \norm{\nab \varphi}_{L^{p'}(\n_{\sqrt{2}\sigma}; (\R^d)^T)} \le 1\right\}.\label{eq:lb}
\end{align}
Here, $\mu^{\sigma}:=\mu\ast \n_{\sigma}$ and $\mhat{\mu}^{\sigma}_n:=\mhat{\mu}_n\ast \n_{\sigma}$. Denoting by $Z_n=\frac{1}{n}\sum_{j=1}^n \ind{\{X^{(j)}=a\}}$, it is easy to compute that
\begin{align}
    \mu^{\sigma}
    = \frac{1}{2}\n(a, \sigma^2 \text{I}_{dT})
    +\frac{1}{2}\n(b, \sigma^2 \text{I}_{dT}),\quad
    \mhat{\mu}^{\sigma}_n
    =Z_n \n(a, \sigma^2 \text{I}_{dT})
    +(1-Z_n)\n(b, \sigma^2 \text{I}_{dT}).
\end{align}
A similar computation was used in \cite[Example (a) on page 2]{fournier2015rate}. The above implies
\begin{align}
    (\mu^{\sigma}-\mhat{\mu}^{\sigma}_n)(\varphi)
    =\int \varphi d\mu^{\sigma}-\int \varphi d\mhat{\mu}^{\sigma}_n
    =\left(1/2-Z_n\right)\left(\int \varphi d \n(a, \sigma^2 \text{I}_{dT})-\int \varphi d \n(b, \sigma^2 \text{I}_{dT})\right).
\end{align}
In particular, we obtain from \eqref{eq:lb} that
\begin{align}
    \w^{(\sigma)}_p(\mhat{\mu}_n, \mu)\ge C \abs{Z_n-1/2}
\end{align}
for some positive $C$. This concludes the proof since $\E[\abs{Z_n-1/2}]$ is of order $n^{-1/2}$.  

\begin{small}
\bibliographystyle{abbrv}
\bibliography{fast_aw2024}
\end{small}

\end{document}